\documentclass[12pt]{article}
\usepackage{amsmath,amssymb,amsfonts,amsthm,url}
\usepackage[margin=1in]{geometry}
\usepackage{verbatim}

\expandafter\def\expandafter\UrlBreaks\expandafter{\UrlBreaks
  \do\a\do\b\do\c\do\d\do\e\do\f\do\g\do\h\do\i\do\j
  \do\k\do\l\do\m\do\n\do\o\do\p\do\q\do\r\do\s\do\t
  \do\u\do\v\do\w\do\x\do\y\do\z\do\A\do\B\do\C\do\D
  \do\E\do\F\do\G\do\H\do\I\do\J\do\K\do\L\do\M\do\N
  \do\O\do\P\do\Q\do\R\do\S\do\T\do\U\do\V\do\W\do\X
  \do\Y\do\Z}
  
\usepackage{graphicx}
\usepackage{subcaption}

\theoremstyle{plain}

\newtheorem{theorem}{Theorem}[section]
\newtheorem{proposition}[theorem]{Proposition}
\newtheorem{corollary}[theorem]{Corollary}
\newtheorem{lemma}[theorem]{Lemma}

\theoremstyle{definition}

\newtheorem{construction}[theorem]{Construction}

\newtheorem{definition}[theorem]{Definition}

\usepackage{chngcntr}
\counterwithin{table}{section}

\theoremstyle{remark}

\newtheorem*{remark}{Remark}

\title{Face distributions of embeddings of complete graphs}
\author{Timothy Sun\\Columbia University}
\date{}

\newcommand{\Z}{\mathbb{Z}}

\begin{document}

\maketitle

\begin{abstract}
A longstanding open question of Archdeacon and Craft asks whether every complete graph has a minimum genus embedding with at most one nontriangular face. We exhibit such an embedding for each complete graph except $K_8$, the complete graph on 8 vertices, and we go on to prove that no such embedding can exist for this graph. Our approach also solves a more general problem, giving a complete characterization of the possible face distributions (i.e. the numbers of faces of each length) realizable by minimum genus embeddings of each complete graph. We also tackle analogous questions for nonorientable and maximum genus embeddings.  
\end{abstract}

\section{Introduction}

The celebrated Map Color Theorem of Ringel and Youngs~\cite{RingelYoungs} boils down to the fact that for $n \geq 3$, $K_n$, the complete graph on $n$ vertices, can be embedded in a sphere with

$$I(n) = \left\lceil \frac{(n-3)(n-4)}{12} \right\rceil$$

handles. Equivalently, $K_n$ can be embedded in the orientable surface of genus $I(n)$. These embeddings are provably minimal in terms of genus, as they match a lower bound given by the Euler polyhedral equation. Starting with the work of Lawrencenko \emph{et al.}~\cite{LNW}, one direction of continued research on this topic examines the number of essentially different \emph{minimum genus} embeddings, and several different approaches (see, e.g., Bonnington \emph{et al.}~\cite{Bonnington-Exponential}, Korzhik and Voss~\cite{Korzhik-Exponential}, Goddyn \emph{et al.}~\cite{Goddyn-Exponential}) have yielded families of embeddings whose sizes are exponential in the number of vertices. 

The main combinatorial technique for finding such embeddings are current graph constructions, where an embedding of a smaller, edge-labeled graph can be used to generate a highly symmetric embedding of a much larger graph. The proof generally proceeds in two steps: 

\begin{itemize}
\item A \emph{regular} step which involves finding a suitable current graph for triangularly embedding a graph that is close to complete (e.g. an embedding of a complete graph minus three edges).
\item An \emph{additional adjacency} step which modifies the embedding and the graph so it becomes complete (e.g. using a handle to add the three missing edges).
\end{itemize}

When $n \equiv 0, 3, 4, 7 \pmod{12}$, the embedding is a triangulation of the surface and no additional adjacency step is necessary. For the other cases, Korzhik and Voss~\cite{KorzhikVoss} exhibit exponentially many embeddings by modifying (in most cases) the regular step of the proof found in Ringel~\cite{Ringel-MapColor}. Their proof that the different embeddings are nonisomorphic involves showing that an isomorphism of nontriangular faces cannot be extended to the whole embedding. 

We exhibit new embeddings of complete graphs that are not isomorphic for a more fundamental reason: the distributions of the face lengths are different. Archdeacon and Craft~\cite{Archdeacon} ask whether or not every complete graph has a minimum genus embedding that is \emph{nearly triangular}, one where at most one face is nontriangular. We construct such an embedding for every complete graph except $K_8$ and prove that $K_8$ has no such embedding. We note that the for most of the complete graphs, the original constructions did not produce nearly triangular embeddings (see the exposition in Korzhik and Voss~\cite{KorzhikVoss}).

One can also ask if there are minimum genus embeddings which manifest all other possible combinations of nontriangular faces (e.g. two quadrangular faces), as permitted by the Euler polyhedral equation. Besides the aforementioned $K_8$, it turns out that the only other complete graph which does not realize all its predicted embedding types is $K_5$. The results in this paper can thus been seen as a step in understanding the embedding polynomials (as introduced by Gross and Furst~\cite{GrossFurst}) of the complete graphs---we fully determine which coefficients corresponding to minimum genus embeddings are nonzero.

In Sections 2-4, we review some background on topological graph theory and current graphs. We prove the main result across Sections 5-12, where the different cases are handled in roughly increasing difficulty of the additional adjacency problem. Some variations of the original problem are solved in Sections 13 and 14, and some potential future directions are outlined in Section 15.

\section{Notation and terminology}

For a comprehensive background on topological graph theory, see Gross and Tucker~\cite{GrossTucker}. For a complete proof of the Map Color Theorem, see Ringel~\cite{Ringel-MapColor}. 

In this paper, a \emph{graph} $G = (V,E)$ consists of a finite set of vertices $V$ and a set of (unoriented) edges $E$. We regard the vertices and edges as a ``1-dimensional cell complex'' where vertices are points and edges are arcs connecting two points. In this paper, we mostly consider \emph{simple} graphs, those without parallel edges between two vertices or self-loops. Vertices and edges are written as letters or numbers. When the graph is simple, we will also write edges as pairs of vertices $(u,v)$, and we say that $u$ and $v$ are adjacent, and that $u$ is a neighbor of $v$ and vice versa. Given an edge $e$, there are two \emph{edge ends} $e^+$ and $e^-$, each incident with a vertex. The \emph{degree} of a vertex is the number of edge ends incident with it (in particular, a self-loop contributes 2 to the degree of a vertex). A \emph{directed graph} $D = (V, E)$ consists of vertices and a set of \emph{arcs} $E$, which are edges with specified orientations. 

Except in Sections \ref{sec-nonorient} and \ref{sec-maxgenus}, we focus solely on embeddings in orientable surfaces. An \emph{embedding} of a graph $G$ on a surface $S$ is an injective map $\phi: G \to S$. We only consider \emph{cellular} embeddings, those where $S \setminus \phi(G)$ decomposes into a disjoint union of open disks (denoted $F$), which we call \emph{faces}. The boundary of each face coincides with a sequence of \emph{corners}, which consist of a vertex and a pair of incident edge ends. To describe a face, it often suffices to give a cyclic ordering of (possibly nondistinct) vertices $[v_1, v_2, \dotsc, v_k]$. We say that a face is \emph{$k$-sided} or is of \emph{length} $k$, where $k$ is the number of elements in the cyclic ordering. We sometimes call a $3$-sided face a \emph{triangle}, a $4$-sided face a \emph{quadrilateral}, and so on.

We write $K_n$ to denote the \emph{complete graph} on $n$ vertices: the simple graph where every pair of vertices is connected by an edge. If $H$ is a subgraph of $G$, $G-H$ is the graph where we take $G$ and delete the edges of $H$. Typically, we take $G$ to be a complete graph, so by symmetry, we do not need to explicitly specify the inclusion map $H \to G$. 

\section{Combinatorics of graph embeddings}

A \emph{rotation} at vertex $v$ is a cyclic permutation of the edge ends incident with $v$. A \emph{rotation system} $\Phi$ of a graph $G$ is a collection of rotations for each vertex of $G$. In the case of a simple graph, we only need to specify a cyclic ordering of the neighbors of $v$. Rotation systems of simple graphs are often written as a table of symbols, so we sometimes refer to the rotation at $v$ as \emph{row $v$}. For an embedding $\phi: G \to S$ in an orientable surface, we can obtain a rotation system by considering the clockwise order of edges incident with each vertex, for some orientation of the surface $S$. The Heffter-Edmonds principle states that this is actually a one-to-one correspondence---each rotation system induces an embedding that is unique up to ``homeomorphism of pairs.'' 

The surface can be constructed in a group-theoretic way. Consider the involution $\theta: e^+\mapsto e^-$ for all edges $e$ and regard $\Phi$ as a permutation of the set of edge ends. Then, the cycles of the composition $\Phi \hspace{1pt}\circ\hspace{1pt} \theta$ define the boundaries of the faces. Intuitively, this permutation is essentially tracing around the boundaries of the faces. In all our drawings, we take the convention where rotations are specified in clockwise order, which induces a counterclockwise orientation on the faces. 

The Heffter-Edmonds principle not only reduces the problem of finding embeddings to a purely combinatorial one, but also shows that there are only finitely many homeomorphism types of embeddings of a given graph. The $\emph{genus}$ of an embedding $\phi: G \to S$ is just the genus of $S$, and the \emph{minimum genus} $\gamma(G)$ of $G$ is the smallest genus over all embeddings $\phi$. 

Let $S_g$ denote the surface of genus $g$, i.e., a sphere with $g$ handles. Given an embedding $\phi: G \to S_g$, the fundamental equation governing cellular embeddings is the \emph{Euler polyhedral equation}
$$|V(G)| - |E(G)| + |F(G)| = 2 - 2g.$$
For a fixed graph $G$, the genus of the surface $S$ it is embedded in is then intimately related to the number of faces. A embedding of a graph is said to be \emph{triangular} if all its faces are triangular, and \emph{nearly triangular} if at most one face is not triangular. For a simple graph, a triangular embedding maximizes the number of faces and thus it has minimum genus.  The starting point of the Map Color Theorem and many other graph embedding problems is the following refinement of the Euler polyhedral equation:

\begin{proposition}
If a graph $G$ has a triangular embedding in $S_g$, then number of edges in $G$ is
$$|E(G)| = 3|V(G)|-6+6g.$$
\label{prop-trianglebound}
\end{proposition}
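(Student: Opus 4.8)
The plan is to combine the Euler polyhedral equation with an elementary double-counting identity that relates the number of faces to the number of edges in a triangular embedding. First I would count the total number of \emph{sides} of faces, i.e. incidences between faces and the edge ends appearing on their boundary walks. Since the embedding is triangular, every one of the $|F(G)|$ faces contributes exactly three sides, so this count equals $3|F(G)|$. On the other hand, the face boundaries are exactly the cycles of $\Phi \circ \theta$ acting on the set of edge ends, so each of the $2|E(G)|$ edge ends occurs in precisely one face boundary walk; hence the same count equals $2|E(G)|$. This gives $3|F(G)| = 2|E(G)|$, i.e. $|F(G)| = \tfrac{2}{3}|E(G)|$.

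Next I would substitute this relation into the Euler polyhedral equation $|V(G)| - |E(G)| + |F(G)| = 2 - 2g$, which applies because the embedding is cellular by assumption. Replacing $|F(G)|$ by $\tfrac{2}{3}|E(G)|$ yields $|V(G)| - \tfrac{1}{3}|E(G)| = 2 - 2g$, and rearranging for $|E(G)|$ produces the claimed formula $|E(G)| = 3|V(G)| - 6 + 6g$.

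The only point that deserves care — and it is the closest thing to an obstacle, though a mild one — is the justification that an edge still contributes exactly two to the side count even when \emph{both} of its sides lie on the boundary of a single face; phrasing the count in terms of edge ends and the permutation $\Phi \circ \theta$, as above, handles this cleanly. Otherwise the argument is a routine application of the combinatorial framework already set up, and no further machinery is needed.
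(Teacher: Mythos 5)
Your argument is correct: the double count $3|F(G)| = 2|E(G)|$ (made rigorous via the edge ends and the faces as cycles of $\Phi \circ \theta$) combined with the Euler polyhedral equation is exactly the standard derivation, and the paper itself states this proposition without proof precisely because it is this routine refinement of Euler's formula. Your extra care about an edge appearing twice on the boundary of a single face is a worthwhile touch, but nothing further is needed.
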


From this relationship, we can figure out what types of minimum genus embeddings are permissible under the Euler polyhedral equation. Some complete graphs, $K_7$ for example, do have a triangular embedding in some surface, but others do not. However, we can triangulate the nontriangular faces with additional edges without increasing the genus. Substituting ``$K_n$ plus $t$ edges'' into Proposition~\ref{prop-trianglebound} yields
$$6n + 12g - 12 = 2(|E(K_n)|+t) = n(n-1)+2t.$$
To remove the genus parameter $g$, we take the resulting equation modulo $12$:
$$2t \equiv -(n-3)(n-4) \pmod{12}.$$
Like in the Map Color Theorem, the analysis now breaks down into twelve Cases (with a capital ``C'') depending on the residue $n \bmod 12$. Initially, we observe the following:

\begin{itemize}
\item If $n \equiv 0, 3, 4, 7 \pmod{12}$, $t \equiv 0 \pmod{6}$.
\item If $n \equiv 2, 5 \pmod{12}$, $t \equiv 5 \pmod{6}$.
\item If $n \equiv 1, 6, 9, 10 \pmod{12}$, $t \equiv 3 \pmod{6}$.
\item If $n \equiv 8, 11 \pmod{12}$, $t \equiv 2 \pmod{6}$.
\end{itemize}

One way of stating the Map Color Theorem is to say that there exist triangular embeddings where we actually have equality for the number of extra edges $t$. 

The goal of the present paper is to classify the different possible face distributions for, primarily, minimum genus embeddings of complete graphs. The \emph{face distribution}\footnote{White~\cite{White-GraphGroups} refers to this as the \emph{region distribution}.} of an embedding is the sequence $f_1, f_2, \dotsc$ where $f_i$ is the number of faces of length $i$. For example, a minimum genus embedding of $K_7$ triangulates the torus, so its face distribution is 
$$0, 0, 14, 0, 0, \dotsc$$ 
For the residue classes $n \not\equiv 0, 3, 4, 7 \pmod{12}$, we try to partition the $t$ ``chordal'' edges into the faces to get embeddings for each possible face distribution permitted by the Euler polyhedral equation. For example, if $n = 14$, then $t=5$. As seen in Figure~\ref{fig-chord}, distributing all five additional edges into the same face gives us an 8-sided face, but we could distribute the edges in a different way to get one 6-sided face and one 5-sided face. 

\begin{figure}[!ht]
\centering
\includegraphics{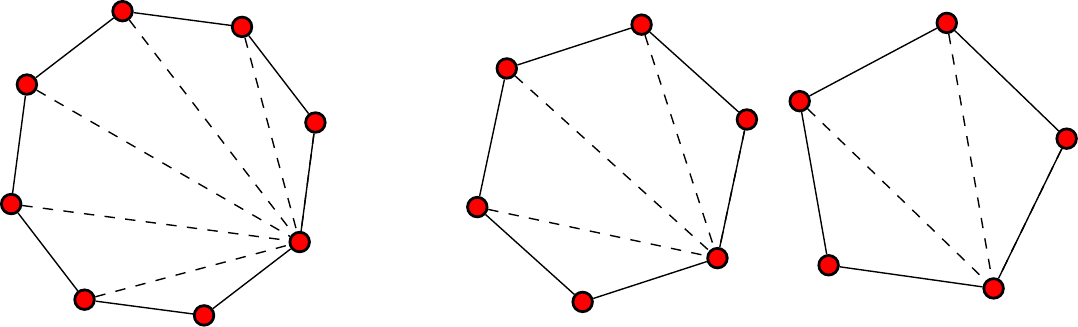}
\caption{For a minimum genus embedding, the missing chords needed to make the embedding triangular could be distributed among faces in a few different ways.}
\label{fig-chord}
\end{figure}

Instead of writing out face distributions in full and counting all the triangular faces, we say that an embedding is of \emph{type $(a_1, \dotsc, a_i)$}, if it has faces of length $a_1, a_2, \dotsc, a_i$, where $a_1 \geq a_2 \geq \dots \geq a_i > 3$ and all the other faces are triangular. In this terminology, $K_{14}$ could have embeddings of type $(8)$ and $(6, 5)$. In general, if $t = b_1 + \dots + b_j$ is a partition of $t$ into positive integers $b_i$, we are looking for an embedding of type $(b_1 + 3, b_2 + 3, \dotsc, b_j + 3)$. Thus, we need to find the following embedding types:

\begin{itemize}
\item For $n \equiv 2, 5 \pmod{12}$, types $(8)$, $(7,4)$, $(6,5)$, $(6,4,4)$, $(5,5,4)$, $(5,4,4,4)$, and 

$(4,4,4,4,4)$.
\item For $n \equiv 1, 6, 9, 10 \pmod{12}$, types $(6)$, $(5,4)$, $(4,4,4)$.
\item For $n \equiv 8, 11 \pmod{12}$, types $(5)$ and $(4,4)$.
\end{itemize}

We appeal to Proposition~\ref{prop-trianglebound} to show that regardless of the graph, embeddings of these types are minimal.

\begin{proposition}
Suppose there exists an embedding $\phi$ of a simple graph $G$ of type $(a_1, \dotsc, a_i)$, where $(a_1-3) + \dots + (a_i-3) \leq 5$. Then $\phi$ is a minimum genus embedding.
\end{proposition}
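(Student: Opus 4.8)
The plan is to compare the genus of $\phi$ against the Euler-formula lower bound for $\gamma(G)$ and then exploit the integrality of the genus. Write $t := (a_1-3) + \dots + (a_i-3)$, so $0 \leq t \leq 5$ by hypothesis. The embedding $\phi$ has exactly $i$ nontriangular faces, of lengths $a_1, \dots, a_i$; triangulating each such face of length $a_j$ by inserting $a_j - 3$ non-crossing chords (diagonals of the bounding polygon) turns $\phi$ into a \emph{triangular} embedding, on the \emph{same} surface $S_g$, of a graph $G^+ \supseteq G$ — possibly a multigraph — with $|E(G^+)| = |E(G)| + t$. Inserting a chord across an open disk leaves the complement a disjoint union of open disks, so no handles are created or destroyed and $G^+$ is still cellularly embedded in $S_g$, where $g$ is the genus of $\phi$.

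Next I would apply Proposition~\ref{prop-trianglebound} to $G^+$. Its only ingredients are the Euler polyhedral equation and the incidence count $2|E| = 3|F|$ valid for any triangular cellular embedding, so it applies verbatim to multigraphs; hence $|E(G)| + t = |E(G^+)| = 3|V(G)| - 6 + 6g$, that is, $6g = |E(G)| + t - 3|V(G)| + 6$.

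For the matching lower bound, let $\psi$ be any (cellular) embedding of $G$ in $S_{g'}$. Since $G$ is simple, every face of $\psi$ has length at least $3$, so the same chord-triangulation procedure yields a triangular embedding in $S_{g'}$ of a graph with at least $|E(G)|$ edges; Proposition~\ref{prop-trianglebound} then gives $|E(G)| \leq 3|V(G)| - 6 + 6g'$, i.e.\ $6g' \geq |E(G)| - 3|V(G)| + 6$. (Equivalently, one may just invoke the standard Euler bound $|E| \leq 3|V| - 6 + 6g'$ for simple graphs.) Subtracting the two relations gives $6(g - g') \leq t \leq 5$, so $g - g' \leq 5/6 < 1$; since $g$ and $g'$ are integers, this forces $g \leq g'$. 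As $\psi$ was arbitrary, $g = \gamma(G)$ and $\phi$ is a minimum genus embedding.

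There is no genuinely hard step here; the points that require a little care are that face-triangulation preserves the ambient surface, that Proposition~\ref{prop-trianglebound} survives the passage to multigraphs, and — crucially — that the hypothesis is $t \leq 5$ rather than $t \leq 6$: it is exactly the bound $t/6 < 1$ that lets the integrality of the genus close the gap between $g$ and $g'$.
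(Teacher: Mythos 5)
Your proof is correct and takes essentially the same approach as the paper: the paper compresses the argument into the remark that at most $5$ extra edges cannot account for the $6$ extra edges afforded by each additional handle, which is precisely your computation $6(g-g') \leq t \leq 5$ combined with integrality of the genus. The extra care you take (triangulating via chords that may produce a multigraph, and checking that Proposition~\ref{prop-trianglebound} survives this) is a welcome filling-in of details the paper leaves implicit, not a different method.
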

\begin{proof}
The inequality is equivalent to the statement that there are at most 5 extra edges. Proposition~\ref{prop-trianglebound} loosely states that each handle allows for 6 extra edges, so the number of edges of $G$ exceeds the number of edges in a triangular embedding in any surface of smaller genus. 
\end{proof}

We state our main result in this language.

\begin{theorem}
For all $n \geq 3$, $n \neq 5, 8$ and for every partition of $t = t(n)$ into positive integers
$$t = b_1 + b_2 + \dots + b_j,$$
for $b_1 \geq b_2 \geq \dots \geq b_j$, there exists an embedding of type $(b_1+3, b_2+3, \dotsc, b_j+3)$ of $K_n$. $K_5$ only has minimum genus embeddings of type $(8)$, $(7,4)$, $(6,4,4)$, $(5,5,4)$, and $(4,4,4,4,4)$, and $K_8$ only has minimum genus embeddings of type $(4,4)$. 
\end{theorem}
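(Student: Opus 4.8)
The plan is to split the theorem into three largely independent tasks: the positive construction for all $n \neq 5, 8$, the exhaustive analysis of $K_5$, and the impossibility argument for $K_8$. I would treat the positive result as the bulk of the paper, proceeding Case by Case on $n \bmod 12$ exactly as the introduction sets up. For $n \equiv 0, 3, 4, 7 \pmod{12}$ there is nothing to do beyond citing the Map Color Theorem, since $t = 0$ and the unique partition corresponds to a triangulation. For the remaining residues, the strategy is to take the current-graph constructions underlying the Map Color Theorem, perform the ``regular'' step to obtain a triangular embedding of $K_n$ minus a small set of edges arranged conveniently (all incident to few vertices, or lying in a controlled pattern), and then design an ``additional adjacency'' step that reinserts the $t$ missing chords while prescribing how they distribute among the faces. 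The key technical point is that by choosing which chords to route through which handles or how to nest them inside a single face (as in Figure~\ref{fig-chord}), one can hit every target type $(b_1+3, \dotsc, b_j+3)$ on the list; I would organize Sections 5--12 so that each partition of each relevant $t \in \{2,3,5\}$ gets its own construction or a reduction to a previously handled one, noting that small cases (small $n$) may need ad hoc rotation systems checked by hand or computer.

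For $K_5$, the analysis is finite and elementary: here $n = 5$, $t = 5$, and $\gamma(K_5) = 1$, so a minimum genus embedding of $K_5$ has $|V| - |E| + |F| = 0$, giving $F = 5$ faces on $10$ edges, hence total boundary length $20$, so the multiset of face lengths is a partition of $20$ into $5$ parts each $\geq 3$; equivalently the ``type'' is a partition of $t=5$ into at most $5$ parts. One then simply enumerates the rotation systems of $K_5$ on the torus (up to the relevant symmetries) — there are very few — compute the face distribution of each, and observe that exactly the types $(8)$, $(7,4)$, $(6,4,4)$, $(5,5,4)$, $(4,4,4,4,4)$ occur, while $(6,5)$ and $(5,4,4,4)$ do not. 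This can be done by brute force over the $4! = 24$ choices for the cyclic orders (with one vertex's rotation fixed by symmetry), and I would present it as a short table.

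The impossibility statement for $K_8$ is the part I expect to be the main obstacle, since it is a genuine nonexistence proof rather than a construction or a finite check. Here $n = 8$, $t = 2$, $\gamma(K_8) = 2$, and a minimum genus embedding has $F = 8 - 28 + (2 - 4) \cdot (-1)$... more precisely $|F| = 2 - 2g - |V| + |E| = 2 - 4 - 8 + 28 = 18$, of which $16$ are triangles and the remaining boundary length is $2 \cdot 3 + 2 = 8$ split as either two quadrilaterals or one pentagon plus... no: the type is a partition of $t = 2$, so either $(4,4)$ or $(5)$. The goal is to rule out type $(5)$, i.e. a nearly triangular embedding of $K_8$. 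I would argue by contradiction: assume such an embedding exists, so all faces are triangles except one pentagon. The main tool is that a triangular (or near-triangular) embedding of $K_8$ would be, after removing the pentagon, very close to a triangulation, and triangulations of surfaces by complete graphs are highly constrained — one can count, for each vertex, the triangles around it, and use the fact that in a triangulation of $K_8$ every pair of adjacent vertices lies in exactly two common triangular faces, forcing strong regularity on the rotation system. I would look for a parity or counting obstruction: for instance, examine the ``link'' structure and show that the edges of the pentagon cannot be consistently completed to triangles around their endpoints, or set up the problem as completing a partial rotation system and show every completion creates an extra nontriangular face. If a clean combinatorial obstruction is elusive, the fallback is an exhaustive computer search over rotation systems of $K_8$ (feasible since $K_8$ is small), but I would prefer and would first attempt a human-readable argument exploiting the self-complementary-like structure of $K_8$ on the genus-$2$ surface and the known classification of its triangular embeddings of $K_8$ minus a perfect matching or similar subgraphs.
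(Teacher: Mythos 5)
Your outline for the constructive part and the $K_5$ enumeration tracks the paper's actual structure: Case-by-Case current-graph constructions with tailored additional adjacency steps for the positive results (with the useful reduction that types $(4,4)$, $(5,4)$, $(4,4,4)$ follow from types $(5)$ and $(6)$ by chord exchanges, since $5$- and $6$-sided faces in a simple graph embedding are nearly simple), and a citation to the known exhaustive enumeration of toroidal embeddings of $K_5$. Those parts are fine in spirit, though as written they are a plan rather than a proof, and minor arithmetic slips (e.g.\ counting $4!$ rather than $(4-1)!^4$ rotation systems for $K_5$) would need cleaning up.

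The genuine gap is the nonexistence of a nearly triangular embedding of $K_8$. You correctly identify this as the hard part, but you do not supply an argument: you propose to ``look for a parity or counting obstruction'' in the link structure and, failing that, to fall back on brute force. The local regularity you describe (each edge on exactly two triangles away from the pentagon) does not by itself yield a contradiction, and a naive search over rotation systems of $K_8$ is on the order of $6!^7 \approx 10^{20}$ candidates, so ``feasible since $K_8$ is small'' needs a pruned backtracking search, not raw enumeration. The paper's proof is a short reduction that your proposal misses entirely: a type-$(5)$ embedding of $K_8$ lives in $S_2$ and its pentagonal face is \emph{simple} (Lemma~\ref{lem-5simp}), so one may place a new vertex inside the pentagon and cone it to the five boundary vertices, producing a triangular embedding in $S_2$ of the simple graph $K_9 - K_{1,3}$ on $9$ vertices. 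This contradicts Huneke's theorem (Theorem~\ref{ref-huneke}) that any simple graph triangulating $S_2$ has at least $10$ vertices. Without this reduction (or an equivalent completed case analysis), the $K_8$ claim in the statement remains unproved in your write-up.
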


Our result answers the original question of Archdeacon and Craft~\cite{Archdeacon}, showing that

\begin{corollary}
For $n \geq 3$, $n \neq 8$, there exists a nearly triangular minimum genus embedding of $K_n$.
\end{corollary}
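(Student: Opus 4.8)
The plan is to derive the corollary from the main theorem by selecting, in each residue class, a single convenient face distribution. There are essentially three regimes to address.

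First, when $n \equiv 0, 3, 4, 7 \pmod{12}$ we have $t(n) = 0$, so the Map Color Theorem (equivalently, the $n \neq 5,8$ case of the main theorem with the empty partition) furnishes a triangular embedding of $K_n$; this has no nontriangular face at all and is therefore nearly triangular. Second, for every remaining $n \geq 3$ with $n \neq 5, 8$ we have $t(n) > 0$, and I would apply the main theorem to the one-part partition $t = b_1$ with $j = 1$. This produces an embedding of $K_n$ of type $(t+3)$, i.e.\ one with exactly one nontriangular face (of length $t+3$), which is precisely the definition of nearly triangular. Third, the graph $K_5$ is excluded from the general statement of the theorem, so it must be handled by hand: here $t(5) = 5$, and the theorem's explicit enumeration of the minimum genus embedding types of $K_5$ includes type $(8)$, which again has a single nontriangular (octagonal) face and hence is nearly triangular.

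Finally, $n = 8$ is excluded from the corollary's hypothesis, consistent with the theorem's assertion that $K_8$ admits minimum genus embeddings only of type $(4,4)$ — two nontriangular faces — so no nearly triangular minimum genus embedding exists there. Assembling these cases covers all $n \geq 3$ with $n \neq 8$.

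Since every case reduces to a direct invocation of the main theorem (or of the Map Color Theorem it contains), there is no real obstacle in the corollary itself; the only point requiring slight care is remembering that $K_5$ falls outside the uniform statement and must be read off from the separately listed realizable types, where the type-$(8)$ embedding supplies what is needed. All of the genuine difficulty is deferred to the proof of the theorem across Sections 5--12.
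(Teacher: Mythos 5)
Your proposal is correct and matches the paper's treatment: the corollary is presented there as an immediate consequence of the main theorem, obtained exactly as you describe by taking the one-part partition (type $(t+3)$), noting the triangular cases $n \equiv 0,3,4,7 \pmod{12}$, and reading off the type-$(8)$ embedding from the explicit list for $K_5$.
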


For the Cases where $t = 2$ or $3$, it turns out that practically all of the difficulty is in finding the nearly triangular embedding, i.e. the embeddings of types $(5)$ and $(6)$. Using those embeddings, it is straightforward to obtain the other types. We say a face is \emph{simple} if it is not incident with the same vertex more than once. 

\begin{lemma}
Let $G$ be a simple graph with minimum degree $2$. For any orientable embedding of $G$, all 5-sided faces are simple. All 6-sided faces have at most one repeated vertex---in particular, it is of the form $[a, b, x, c, d, x']$, where only $x$ and $x'$ are possibly nondistinct.
\label{lem-5simp}
\end{lemma}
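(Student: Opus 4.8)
The plan is to argue by contradiction using a simple counting/girth-type argument on the face boundary walk. Suppose a $5$-sided face $F$ of some orientable embedding of $G$ is not simple, i.e. some vertex $x$ appears at least twice among the (at most five) corners of $F$. The boundary walk of $F$ is a closed walk of length $5$ in $G$; since $G$ is simple, there are no loops or multiple edges, so consecutive corners are distinct vertices and a vertex cannot reappear after exactly one step. Thus the two occurrences of $x$ split the boundary walk into two closed subwalks based at $x$, each of length at least $2$; since the total length is $5$, the two parts have lengths $2$ and $3$ (a length-$2$ closed walk would be $[x,y,x]$, using the edge $(x,y)$ twice). The key point is that a length-$2$ closed walk $x \to y \to x$ forces the edge $(x,y)$ to be traversed twice by the boundary of $F$, once in each direction.

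The heart of the argument is to rule out this last possibility by a local analysis at the corner. When an edge $e=(x,y)$ is traversed twice by the boundary of the \emph{same} face, once in each direction, a standard fact about embeddings is that $e$ must then be incident to only this one face, and moreover cutting along $e$ shows that $y$ (or $x$) has degree $1$: the two edge ends at $y$ consecutive in the rotation are exactly the two ends of $e$, so $y$'s rotation consists of a single edge. I would make this precise via the face-tracing permutation $\Phi \circ \theta$ from Section~3: if the face walk reads $\dots, e^+, e^-, \dots$ at the corner at $y$ (i.e. enters and immediately leaves $y$ along $e$), then the rotation at $y$ sends $e^-$ back to $e^+$, which is only possible if $\deg(y) = 1$. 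This contradicts the hypothesis that $G$ has minimum degree $2$. Hence no vertex repeats in a $5$-sided face, proving the first claim.

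For the $6$-sided case the same reasoning applies. A boundary walk of length $6$ with a repeated vertex $x$ decomposes, via a shortest recurrence, into closed subwalks whose lengths sum to $6$; the only possibilities avoiding a length-$2$ piece (ruled out as above by minimum degree $2$) are $3+3$. So there is exactly one repeated vertex $x$, occurring at positions $1$ and $4$ of the cyclic walk, giving the form $[x, a, b, x', c, d]$; after a cyclic rotation this is $[a, b, x, c, d, x']$ with $x, x'$ the two occurrences of the repeated vertex and $a,b,c,d$ otherwise unconstrained (in particular they may coincide with each other but not create a shorter recurrence). I would also note that $x = x'$ as vertices but they are distinct \emph{corners}, which is what "possibly nondistinct" in the statement refers to; one should check no third occurrence is possible, which again follows since three occurrences would split length $6$ into three pieces, forcing a length-$2$ piece.

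The main obstacle is making the local degree argument fully rigorous: one must be careful that "the edge $e$ is traversed twice in opposite directions by the face" genuinely forces the degree-$1$ conclusion, and distinguish it from the harmless situation where an edge borders a face on both sides but is traversed in the \emph{same} rotational sense at the two corners (which happens for edges on a "strip" and does not constrain the degree). Working directly with the permutation $\Phi \circ \theta$ on edge ends, rather than with the informal cyclic-vertex notation for faces, is the cleanest way to keep this straight, since the vertex-list notation $[v_1, \dots, v_k]$ loses the edge-end information exactly when a vertex repeats. Once that lemma about repeated edges in a single face is nailed down, both statements follow by the elementary partition-of-the-length bookkeeping sketched above.
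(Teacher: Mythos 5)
Your argument for the $5$-sided case is correct and is essentially the paper's: the two occurrences of a repeated vertex would have to be at distance $1$ (a self-loop) or $2$ (either a parallel edge or, if the same edge is used twice, a degree-$1$ vertex), and the local analysis via $\Phi\circ\theta$ that you sketch for the distance-$2$ case is exactly the right way to make ``$w$ would have degree $1$'' rigorous.

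The $6$-sided case has a genuine gap. Your length bookkeeping shows only that \emph{each} repeated vertex must occur at antipodal positions of the hexagonal walk; it does not show there is at most \emph{one} repeated vertex, which is the actual content of the second claim. Indeed your parenthetical ``$a,b,c,d$ \dots may coincide with each other'' contradicts the statement you are proving (the lemma asserts only $x$ and $x'$ may coincide). Nothing in your argument rules out a face of the form $[a,b,c,a,b,c']$ in which both $a$ and $b$ repeat, and no counting argument can rule it out: the paper's own nonorientable embedding of $K_7$ in $N_3$ has exactly such a face $[a,b,c,a,b,d]$. This is where the orientability hypothesis --- which your proof never invokes --- must enter. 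The paper's argument is: if two vertices each appear twice, their occurrence positions are two of the three antipodal pairs, so the two vertices are adjacent in the cyclic walk and the face has the form $[a,b,c,a,b,c']$; since $G$ is simple there is only one edge $(a,b)$, and it is then traversed twice \emph{in the same direction} along this single face boundary, so the face together with that edge contains a M\"obius band, contradicting orientability. You need to add this step (or an equivalent use of orientability) to complete the proof.
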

\begin{proof}
Suppose some vertex $v$ appears twice in some $5$-sided face. The face cannot be of the form $[\dots v, v \dots]$, otherwise there would be a self-loop at $v$. On the other hand, the face also cannot be of the form $[\dots v, w, v \dots]$ for some vertex $w$, because otherwise $w$ would have degree 1, or there would be more than one edge incident with $v$ and $w$. 

By the same reasoning, the two instances of a repeated vertex on a 6-sided face must appear ``opposite'' each other. Suppose two vertices $a$ and $b$ appeared twice on the same face. Without loss of generality, the face must be of the form $[a, b, c, a, b, c']$. However, this would imply that the embedding is on a nonorientable surface, since the edge $(a,b)$ is traversed twice in the same direction.\footnote{The union of the face and the edge $(a,b)$ is homeomorphic to a Mobius band.}
\end{proof}

\begin{proposition}
If $K_n$ has an orientable embedding of type $(5)$ (resp. type $(6)$), then it has an embedding of type $(4,4)$) (resp. types $(5,4)$ and $(4, 4, 4)$). 
\label{prop-simple6}
\end{proposition}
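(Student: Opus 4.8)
The plan is to take an embedding of $K_n$ of type $(5)$ or $(6)$ and perform a local surgery inside the single nontriangular face that ``splits'' it into two (or three) smaller nontriangular faces, without changing the genus, by adding one or two of the chordal edges. Concretely, given a type-$(5)$ embedding, its one pentagonal face is, by Lemma~\ref{lem-5simp}, a simple $5$-cycle $[v_1,v_2,v_3,v_4,v_5]$; I want to insert a chord. The obstacle is that the chord I insert must be an edge \emph{not already present} in $K_n$ drawn in this corner of the surface — but $K_n$ already contains every edge, so I cannot literally add a new edge. The right move is therefore to add a chord as a \emph{second} copy running through the face and then observe that this is the same as subdividing the face by a diagonal, which corresponds to modifying the rotation system at two vertices on the face boundary. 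Adding the diagonal $(v_1,v_3)$ splits $[v_1,v_2,v_3,v_4,v_5]$ into a triangle $[v_1,v_2,v_3]$ and a quadrilateral $[v_1,v_3,v_4,v_5]$ — giving type $(4)$ — but that is the wrong target; to land on type $(4,4)$ I need the two added chords simultaneously. So the cleaner framing: a type-$(4,4)$ embedding of $K_n$ is obtained from a type-$(5)$ embedding by choosing a pentagonal face, adding one chord, and showing the result has exactly two quadrilateral faces and the genus is unchanged. I will verify the genus is unchanged by the Euler polyhedral equation: adding one edge and splitting one face into two changes $|E|$ by $+1$ and $|F|$ by $+1$, so $2-2g$ is unchanged.

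Here is the precise version. Let $\phi$ be a type-$(5)$ embedding of $K_n$ with nontriangular face $F = [v_1, v_2, v_3, v_4, v_5]$, which is simple by Lemma~\ref{lem-5simp}; in particular $v_1, \dots, v_5$ are distinct. The edge $(v_1, v_3)$ is present in $K_n$, but I want a \emph{drawing} of $K_n$ in which this edge is routed through $F$. I form a new rotation system $\Phi'$ from $\Phi$ by rerouting the edge $(v_1,v_3)$: I delete it from its current position in the rotations at $v_1$ and $v_3$, and reinsert it so that in row $v_1$ it lies between $v_2$ and $v_5$, and in row $v_3$ it lies between $v_2$ and $v_4$. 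This is a valid rotation system, hence defines an embedding $\phi'$ of $K_n$ in \emph{some} orientable surface. The point is that the face $F$ is cut into $[v_1, v_2, v_3]$ and $[v_1, v_3, v_4, v_5]$; moreover, on the other side, the two faces that previously used the edge $(v_1,v_3)$ get \emph{merged} into one. So I have a net change of zero faces — and I have not yet achieved type $(4,4)$. The actual resource I must exploit is different: I should reroute \emph{two} chordal edges that were each bounding \emph{triangular} faces of $\phi$, so that their removal-and-reinsertion into $F$ increases the face count by exactly $1$ while splitting $F$ into two quadrilaterals. That is, I pick chords $(v_1,v_3)$ and $(v_1,v_4)$; rerouting both into $F$ splits $F$ into $[v_1,v_2,v_3]$, $[v_1,v_3,v_4]$, $[v_1,v_4,v_5]$ — three triangles, type $()$, genus dropped — which is impossible, confirming that the triangular faces these chords bordered cannot all merge cleanly, and a careful bookkeeping of which faces merge is needed.

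Given the delicacy above, the honest approach — and the one I expect the paper to take — is: the type-$(4,4)$ embedding is \emph{not} obtained by rerouting edges already in $K_n$, but by a handle/face surgery that is the inverse of the standard ``additional adjacency'' trick. Reinterpret the type-$(5)$ embedding's pentagon $F = [v_1,\dots,v_5]$: cut along a chord $\{v_1,v_3\}$, but instead of adding an edge of $K_n$, \emph{add a new handle is wrong too} — genus must not change. The workable move: $F$ has five corners; at corner $v_1$ there are two edge-ends of $F$, say toward $v_2$ and toward $v_5$, plus other edge-ends of $K_n$ in between (since $\deg v_1 = n-1 \geq 4$ there is at least one). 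Slide one such edge-end, say the one to a vertex $w$, so it moves to the other side, effectively ``pushing'' the edge $(v_1,w)$ across $F$; if $w$ happens to be on $F$ as well, this splits $F$. The cleanest statement: because $v_1$ is adjacent to all of $v_2,v_3,v_4,v_5$ and these edges all currently leave $v_1$ outside $F$, I can reroute $(v_1,v_3)$ to pass through $F$ as above, getting faces $[v_1,v_2,v_3]$ and $[v_1,v_3,v_4,v_5]$ inside the old $F$; the rerouting changes the face on the \emph{other} side of $(v_1,v_3)$ — but that face was a triangle $[v_1, v_3, z]$, and after removing the edge it becomes part of a larger face. I then reroute a \emph{second} edge to re-triangulate that damage while splitting the quadrilateral, and track lengths.

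I will carry out the argument in the following order. First, fix the type-$(5)$ embedding and name the pentagon $[v_1,\dots,v_5]$ (simple, by Lemma~\ref{lem-5simp}); symmetrically fix the type-$(6)$ embedding and name its hexagon $[a,b,x,c,d,x']$. Second, for the type-$(5)$ case, identify the chord to reroute and the two triangular faces affected, reroute it, and recount: I expect to get, after a single well-chosen rerouting, exactly the face distribution of a type-$(4,4)$ embedding (one quadrilateral from splitting the pentagon, one quadrilateral from the merge on the other side), with $|E|$ and $|F|$ both up by one, hence genus unchanged, and all other faces still triangular and the graph still $K_n$. Third, for type $(6)$: rerouting one chord through the hexagon $[a,b,x,c,d,x']$ — say the chord $(a,x)$ or $(a,c)$, chosen so the resulting pieces are a triangle and a pentagon — yields type $(5,4)$ after accounting for the merge; rerouting two suitably chosen chords yields type $(4,4,4)$. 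Fourth, in each case verify simplicity-type side conditions are not violated and that no multi-edge or self-loop is created (this is where the precise position of $x$ versus $x'$ in the hexagon matters, and where I will invoke Lemma~\ref{lem-5simp}'s structural description). The main obstacle is the bookkeeping of \emph{which} face lies on the far side of each rerouted chord and how the lengths of those faces change: I must choose the rerouted chords so that the far-side faces are triangles that merge into exactly the right lengths, and I must make sure the two rerouted chords (in the type-$(4,4,4)$ sub-case) do not interact — i.e.\ their far-side faces are distinct. Handling that interaction, and showing a valid choice always exists using only that $K_n$ is complete and the nontriangular face is simple/near-simple, is the crux.
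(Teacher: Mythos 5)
Your final plan is the paper's construction---the \emph{chord exchange}: take an existing edge of $K_n$ joining two non-consecutive vertices of the nontriangular face, delete it from where it currently sits, and reinsert it as a chord of that face. But the proposal as written does not close, because of a bookkeeping error that you commit, briefly correct, and then leave unresolved. When you reroute $(v_1,v_3)$ into the simple pentagon $[v_1,\dotsc,v_5]$, the pentagon splits into a triangle and a quadrilateral, \emph{and} the two faces formerly on either side of $(v_1,v_3)$---necessarily two distinct triangles, since $(v_1,v_3)$ is not a boundary edge of the pentagon and no triangular face of a simple graph is incident with the same edge twice---merge into a quadrilateral. That is already an embedding of type $(4,4)$ in the same surface: $|E|$ and $|F|$ are both \emph{unchanged} (not ``both up by one''; you cannot add an edge to $K_n$), so the genus is preserved, and the two quadrilaterals are exactly the required nontriangular faces. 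Your assertion that after this move ``I have not yet achieved type $(4,4)$,'' and the ensuing detours (rerouting a second chord, ``re-triangulating that damage''), chase a non-problem; re-triangulating the far-side quadrilateral would in fact produce a type-$(4)$ embedding in the same surface, which the Euler count forbids. The merge on the far side is the second quadrilateral you need, not damage to be repaired.

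The hexagon case contains a second concrete error. Writing the face as $[a,b,x,c,d,x']$ with only $x,x'$ possibly equal (Lemma~\ref{lem-5simp}), your proposed chord $(a,x)$ fails precisely when $x=x'$: then $(a,x)=(a,x')$ is a boundary edge of the hexagon, so it cannot be exchanged; and $(a,c)$ splits the hexagon into two quadrilaterals rather than a triangle and a pentagon. The usable chords are the distance-two pairs drawn from the four guaranteed-distinct vertices $a,b,c,d$, namely $(b,c)$ and $(a,d)$: exchanging one gives a triangle and a pentagon inside the hexagon plus one far-side quadrilateral, i.e.\ type $(5,4)$; exchanging both leaves the quadrilateral $[a,b,c,d]$ flanked by two triangles plus two far-side quadrilaterals, i.e.\ type $(4,4,4)$. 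The ``interaction'' you flag as the crux is immediate: no triangular face can be incident with both $(b,c)$ and $(a,d)$, since that would require four distinct vertices on a triangle. So the obstacle you defer to future bookkeeping is trivial, while the step you treat as problematic (the far-side merge) is the one doing the work.
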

\begin{proof}
In the embedding of type $(5)$, the 5-sided face $f$ is simple by Lemma~\ref{lem-5simp}, so if $f$ is of the form $[\dots a, b, c \dots]$, $a$ must be different from $c$, and the edge $(a,c)$ is not incident with this face. If we delete the edge $(a,c)$ and add it back in as a chord of $f$, we get an embedding of type $(4,4)$. 

Applying Lemma~\ref{lem-5simp} again, suppose the 6-sided face in an embedding of type $(6)$ is of the form $[a, v, w, a', x, y]$, where $a$ and $a'$ are possibly not distinct. Like in the previous case, we alter the positions of edges $(v,w)$ and $(x,y)$, like in Figure~\ref{fig-6to444}, so that they become chords. The result is an embedding of type $(4,4,4)$. Applying this procedure to just one of the edges yields an embedding of type $(5,4)$.
\end{proof}

\begin{figure}[!ht]
\centering
\includegraphics{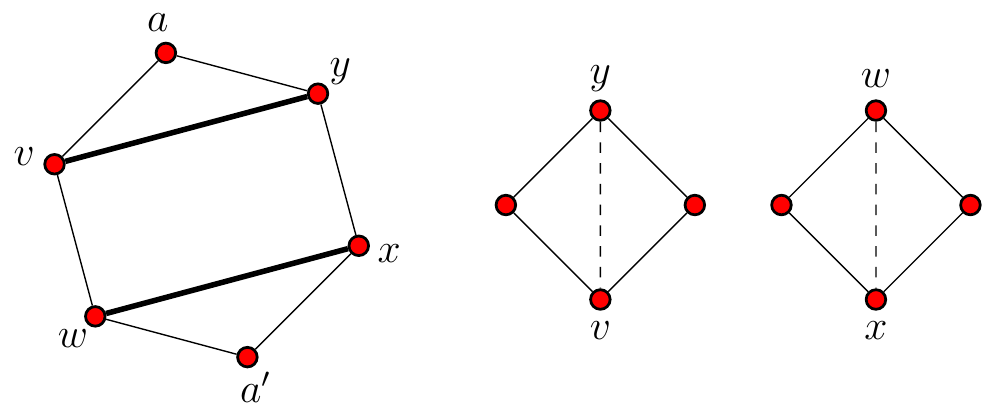}
\caption{Changing an embedding of type $(6)$ into one of type $(4,4,4)$. The dashed and thickened lines represent the old and new locations of the edges, respectively.}
\label{fig-6to444}
\end{figure}

The idea of changing the location of an existing edge to a nontriangular face is prevalent in this paper. We call such an operation a \emph{chord exchange} $\pm(u,v)$ or say that we are \emph{exchanging the chord $(u,v)$}. 

\section{Current graphs}

The main tools for constructing triangulations of large complete graphs are known as \emph{current graphs}. We describe them slightly informally here---a rigorous topological treatment can be found in Gross and Tucker~\cite{GrossTucker}. Let $D$ be a directed graph, possibly with self-loops and parallel edges, with an embedding $\phi: D \to S$ in an orientable surface $S$, and let $\lambda: E(D) \to \Gamma$ be an \emph{assignment}, where arcs are labeled with elements, which we call \emph{currents}, from an abelian group $\Gamma$. The triple $\langle D, \phi, \lambda \rangle$ is called a \emph{current graph}. The groups we consider in this paper are the cyclic groups $\Z_n$, i.e., the integers under addition modulo $n$. For convenience, we sometimes use negative signs to describe vertices, e.g., $-1$ instead of $12s{+}6$ in the group $\Z_{12s+7}$. 

The name of this computational tool comes from the desirable property that at most vertices, ``flow'' is conserved. Note that if an arc is assigned the current $\gamma$, replacing the arc with an arc in the opposite direction with current $-\gamma$ yields an equivalent current graph. The \emph{excess} of a vertex is the sum of the currents of arcs incident with $v$, when oriented towards $v$. We say that a vertex $v$ satisfies \emph{Kirchhoff's current law} (KCL) if its excess is $0$. We call a vertex $v$ a \emph{vortex} if KCL is not satisfied there, and for each corner of a face incident with $v$, we mark it with a letter. Let the \emph{order} of an element $g \in \Z_n$ be the smallest positive integer $p$ such that $pg = 0$. We consider current graphs which have three different types of vortices:

\begin{itemize}
    \item [(T1)] If $v$ is a vortex of degree $1$ and $\Gamma = \Z_n$, its excess has order $n$.
    \item [(T2)] If $v$ is a vortex of degree $1$ and $\Gamma = \Z_{2n}$, its excess has order $n$. 
    \item [(T3)] If $v$ is a vortex of degree $3$ and $\Gamma = \Z_{3n}$, its excess has order $n$, and the currents $\alpha, \beta, \gamma$ flowing into $v$ satisfy either $\alpha,\beta,\gamma \equiv 1 \pmod{3}$ or $\alpha,\beta,\gamma \equiv 2 \pmod{3}$.
\end{itemize}

We say that a current graph $\langle D, \phi, \lambda \rangle$ is \emph{valid} if it satisfies the following ``construction principles'':

\begin{itemize}
    \item [(C1)] Each vertex of $D$ has degree 3 or 1.
    \item [(C2)] $\phi$ is a one-face embedding.
    \item [(C3)] Each element of $\Gamma\setminus\{0\}$ or its inverse appears exactly once as a current.
    \item [(C4)] For each non-vortex $v$ of degree 3, the sum of the inward flowing currents satisfies KCL.
    \item [(C5)] If $\Gamma \cong \Z_{2n}$, the element $n \in \Z_{2n}$ must be assigned to an edge incident with a vertex of degree 1. 
    \item [(C6)] Each vortex is of type (T1), (T2), or (T3).
\end{itemize}

These construction principles guarantee, among other things, that the resulting embedding is triangular. The number of faces in the embedding $\phi$ is referred to as the \emph{index} of the current graph. By principle (C2), we only consider index 1 current graphs, though in two instances ($K_{20}$ and $K_{30}$), we derive our embedding from index 3 current graphs. However, in the interest of brevity, we omit the descriptions of these current graphs and work on the rotation systems directly. 

A standard way of checking if a rotation system is triangular is known as \emph{Rule R*}, which is guaranteed to be satisfied in most cases by KCL.

\begin{definition}
A rotation system satisfies \emph{Rule R*} if for all edges $(i,k)$, if row $i$ is of the form
$i. \, \dots \, j \, k \, l \dots,$
then row $k$ is of the form $k. \, \dots l \, i \, j \dots$
\end{definition}

\begin{theorem}[see Ringel~{\cite[\S5.1]{Ringel-MapColor}}]
A rotation system of $G$ satisfies Rule R* if and only if it describes a triangular embedding of $G$ on an orientable surface.
\end{theorem}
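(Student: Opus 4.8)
The plan is to argue directly with the face-tracing permutation $\Phi \circ \theta$ from Section 3, specialized to a simple graph. I identify the edge end of $(u,v)$ incident with $u$ with the directed edge $u \to v$, so that the face-tracing rule sends $u \to v$ to $v \to w$, where $w$ is the neighbor immediately following $u$ in row $v$; the faces are exactly the orbits of this map. Since every rotation system already describes an embedding in some orientable surface by the Heffter--Edmonds principle, the ``orientable'' clause is automatic, and all that must be shown is the equivalence between Rule R* and the triangularity (length-$3$ orbits) of this map.

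For the forward direction I would assume Rule R* holds at every edge and trace the orbit through an arbitrary directed edge $a \to b$. By definition the next directed edge is $b \to c$, so row $b$ reads $\dots\, a\, c\, \dots$; writing it as $\dots\, a\, c\, l\, \dots$ and applying Rule R* to the edge $(b,c)$ (taking $i = b$, $k = c$, $j = a$) shows row $c = \dots\, l\, b\, a\, \dots$, so the successor of $b$ in row $c$ is $a$ and the third directed edge is $c \to a$. Applying Rule R* once more to the edge $(c,a)$ gives that the successor of $c$ in row $a$ is $b$, so the fourth directed edge is $a \to b$ and the orbit has closed after exactly three steps. Hence every face is the triangle $[a,b,c]$, and by Proposition~\ref{prop-trianglebound} the embedding is triangular.

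For the converse I would assume the embedding is triangular and fix an edge with row $i = \dots\, j\, k\, l\, \dots$. Tracing the orbit through $j \to i$, the next directed edge is $i \to k$; since this face is a triangle it must be $[j,i,k]$, forcing the directed edge $k \to j$ and hence making $j$ the successor of $i$ in row $k$, so row $k = \dots\, i\, j\, \dots$. Tracing the orbit through $k \to i$ gives $i \to l$ next; as that face is the triangle $[k,i,l]$, the directed edge $l \to k$ occurs, which makes $i$ the successor of $l$ in row $k$, so row $k = \dots\, l\, i\, \dots$. Reading the two facts together yields row $k = \dots\, l\, i\, j\, \dots$, which is precisely Rule R* at $(i,k)$.

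The step I would be most careful about is pinning down the orientation convention so that ``successor in the rotation'' is used consistently in both the face-tracing map and the statement of Rule R*. Reassuringly, Rule R* is invariant under reversing every row (the operation corresponding to reversing the surface orientation), so the same argument goes through verbatim with ``predecessor'' in place of ``successor''; once one consistent choice is fixed, each direction reduces to the short local bookkeeping above, and no global or topological input beyond Heffter--Edmonds is required.
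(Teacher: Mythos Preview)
The paper does not supply its own proof of this theorem; it simply cites Ringel~\cite[\S5.1]{Ringel-MapColor} and moves on. Your argument is correct and is essentially the standard proof found there: one unwinds the face-tracing permutation $\Phi\circ\theta$ locally and observes that Rule~R* is precisely the condition that every orbit closes after three steps. The only quibble is that your appeal to Proposition~\ref{prop-trianglebound} at the end of the forward direction is superfluous (and in fact that proposition says something different); once you have shown every orbit has length~$3$, the embedding is triangular by definition.
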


We trace the boundary of the one face and write down the arcs and letters (from vortices) in a cyclic sequence. If we traverse arc $a$ in the same direction as its orientation, we replace it with $\lambda(a)$. Otherwise, we replace it with $-\lambda(a)$. Because of principle (C5), the face boundary will have two consecutive instances of the element of order 2, but we will only record it once.\footnote{Technically, we end up with a disjoint collection of pairs of parallel edges, but we condense each pair into one edge.} To emphasize this omission, we follow the convention where the vertex of degree 1 incident with this arc is not drawn. The resulting cyclic sequence of elements of $\Gamma$ and letters is the \emph{log} of the face boundary. 

Figure~\ref{fig-cur-case2ex} gives an example of a current graph illustrating all the vortex types and construction principles. The rotations at solid vertices are oriented clockwise, and the rotations at hollow vertices are oriented counterclockwise. The log of this face, which essentially describes the rotation at vertex 0, is 
{
\setlength{\arraycolsep}{4.5pt}
$$\begin{array}{rrrrrrrrrrrrrrrrrrrrrrrrrrrrrrrr}
0. & 11 & x & 7 & a & 8 & w & 13 & 1 & 15 & 9 & 6 & 5 & u & 16 & y & 2 & v & 10 & c & 14 & 17 & 12 & 3 & 4 & b 
\end{array}$$
}
\begin{figure}[!ht]
\centering
\includegraphics{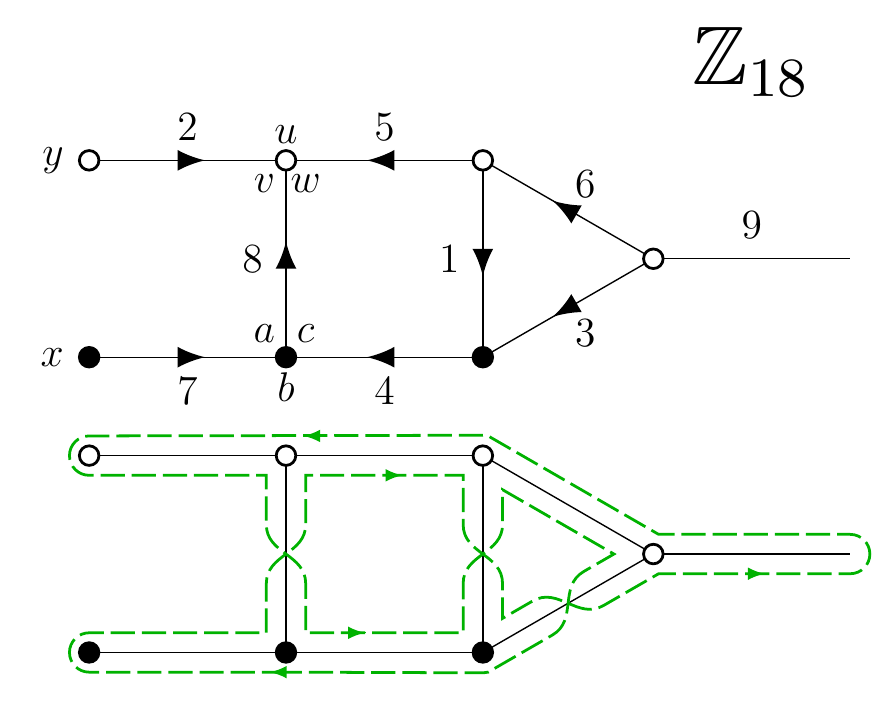}
\caption{A current graph used by Ringel and Youngs~\cite{RingelYoungs-Case2} and the boundary of the single face of the embedding.}
\label{fig-cur-case2ex}
\end{figure}
To generate the remaining rows, we use what is known as the \emph{additive rule}. To find the rotation at vertex $k \in \Gamma$, we do the following:
\begin{itemize}
\item For each entry $k' \in \Gamma$ in the log, increment it by $k$. 
\item For vortex letters $x$ of type (T1), leave it as $x$.
\item For vortex letters $y$ of type (T2), replace it with $y_{k \bmod 2}$. 
\item For vortex letters $a,b,c$ of type (T3), suppose without loss of generality that the log is of the form
{
\setlength{\arraycolsep}{4.5pt}
$$\begin{array}{rrrrrrrrrrrrrrrrrrrrrrrrrrrrrrrr}
0. & \dots & a & \dots & b & \dots & c & \dots
\end{array}$$
}
and that the incoming currents are all congruent to $1 \pmod{3}$. 
\begin{itemize}
\item If $k \equiv 0 \pmod{3}$, keep the letters the same. 
\item If $k \equiv 1 \pmod{3}$, replace them as
{
\setlength{\arraycolsep}{4.5pt}
$$\begin{array}{rrrrrrrrrrrrrrrrrrrrrrrrrrrrrrrr}
k. & \dots & b & \dots & c & \dots & a & \dots
\end{array}$$
}
\item If $k \equiv 2 \pmod{3}$, replace them as
{
\setlength{\arraycolsep}{4.5pt}
$$\begin{array}{rrrrrrrrrrrrrrrrrrrrrrrrrrrrrrrr}
k. & \dots & c & \dots & a & \dots & b & \dots
\end{array}$$
}
\end{itemize}
\end{itemize}
For the numbered vertices, the rotations look like
{
\setlength{\arraycolsep}{4pt}
$$\begin{array}{rrrrrrrrrrrrrrrrrrrrrrrrrrrrrrrr}
0. & 11 & x & 7 & a & 8 & w & 13 & 1 & 15 & 9 & 6 & 5 & u & 16 & y_0 & 2 & v & 10 & c & 14 & 17 & 12 & 3 & 4 & b & \\
1. & 12 & x & 8 & c & 9 & v & 14 & 2 & 16 & 10 & 7 & 6 & w & 17 & y_1 & 3 & u & 11 & b & 15 & 0 & 13 & 4 & 5 & a & \\
2. & 13 & x & 9 & b & 10 & u & 15 & 3 & 17 & 11 & 8 & 7 & v & 0 & y_0 & 4 & w & 12 & a & 16 & 1 & 14 & 5 & 6 & c & \\
3. & 14 & x & 10 & a & 11 & w & 16 & 4 & 0 & 12 & 9 & 8 & u & 1 & y_1 & 5 & v & 13 & c & 17 & 2 & 15 & 6 & 7 & b & \\
4. & 15 & x & 11 & c & 12 & v & 17 & 5 & 1 & 13 & 10 & 9 & w & 2 & y_0 & 6 & u & 14 & b & 0 & 3 & 16 & 7 & 8 & a \\
\vdots &
\end{array}$$
}
and so on. For all the lettered vertices, their rotations are ``manufactured'' so that Rule R* is satisfied. For example, we obtain the rows
{
\setlength{\arraycolsep}{4.2pt}
$$\begin{array}{rrrrrrrrrrrrrrrrrrrrrrrrrrrrrrrr}
a. & 0 & 7 & 11 & 3 & 10 & 14 & 6 & 13 & 17 & 9 & 16 & 2 & 12 & 1 & 5 & 15 & 4 & 8 & \\
x. & 0 & 11 & 4 & 15 & 8 & 1 & 12 & 5 & 16 & 9 & 2 & 13 & 6 & 17 & 10 & 3 & 14 & 7 & \\
y_0. & 0 & 16 & 14 & 12 & 10 & 8 & 6 & 4 & 2 & \\
\end{array}$$
}
If the current graph is valid, we get a triangular embedding of a graph with $|\Gamma|$ numbered vertices, all pairwise adjacent, and some lettered vertices, all pairwise nonadjacent. Vortices of type (T1) and (T3) are adjacent to all the numbered vertices, while vortices of type (T2) each split into two vertices that are adjacent to half of the numbered vertices. 

The ``geometry'' of the current graphs we will encounter contain ladder-like subgraphs, like in the middle of the current graph in Figure~\ref{fig-cur-case2ex}. The additional adjacency steps only use part of a current graph, so we ignore the unneeded parts by replacing ladders with boxes, as in Figure~\ref{fig-ladder}. All the vertices replaced by the box satisfy KCL, and the currents are assigned such that construction principle (C3) holds. In this paper, the rotations have already been specified, but the originators of this notation, Korzhik and Voss~\cite{KorzhikVoss}, used the box to mean any set of rotations that produce a one-face embedding. Additionally, we may omit some current assignments on edges outside of these boxes for clarity. Typically they can be recovered by KCL.

\begin{figure}[!ht]
\centering
\includegraphics{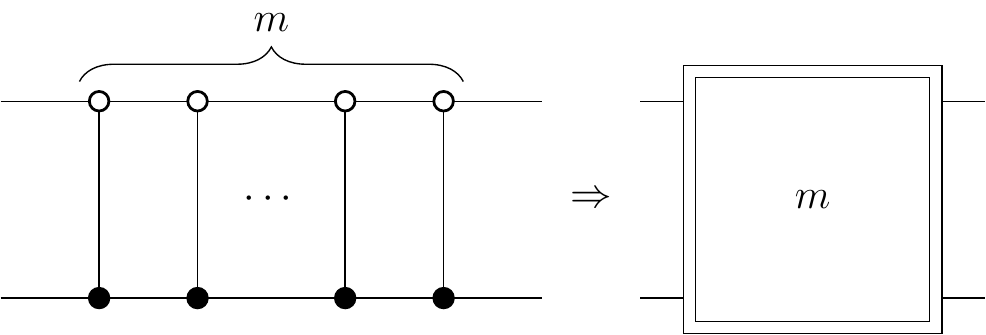}
\caption{Instead of drawing the ladder subgraph on the left, we replace it with a box indicating the number of ``rungs.''}
\label{fig-ladder}
\end{figure}

\section{Cases 2 and 5}

For $n \equiv 2, 5 \pmod{12}$, we expect to find a nearly-triangular embedding with an 8-sided face. Fortunately, we can leverage existing constructions for these Cases:

\begin{theorem}[Jungerman~\cite{Jungerman-KnK2}, Ringel~{\cite[p.83]{Ringel-MapColor}}]
For $s \geq 1$, there exists a triangular embedding of $K_{12s+2}-K_2$.
\label{thm-case2}
\end{theorem}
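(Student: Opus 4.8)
The statement is a known one---it is, in effect, the ``regular step'' underlying Case~$2$ of the Map Color Theorem---and the plan is to prove it by the current-graph method developed above, following Jungerman and Ringel. Before building anything, a bookkeeping computation via Proposition~\ref{prop-trianglebound} fixes the target surface: $K_{12s+2}-K_2$ has $12s+2$ vertices and $\binom{12s+2}{2}-1 = 72s^2+18s$ edges, so a triangular embedding must lie in $S_g$ with $|E|-3|V|+6 = 72s^2-18s = 6g$, i.e.\ $g = 12s^2-3s$, which is exactly $I(12s+2)-1 = \gamma(K_{12s+2})-1$. It is also worth noting why one deletes an edge at all: $\binom{12s+2}{2}$ is not of the form $3(12s+2)-6+6g$ for any integer $g$, so $K_{12s+2}$ itself triangulates no surface; removing a single edge is the minimal repair, and in the Map Color Theorem the deleted edge is afterwards reinstated by adjoining one handle.

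The plan, then, is to exhibit for every $s\geq 1$ a valid current graph $\langle D,\phi,\lambda\rangle$ over a cyclic group whose derived rotation system triangulates $K_{12s+2}-K_2$. The directed graph $D$ will consist of a long ``ladder''---all of whose interior vertices are trivalent and satisfy Kirchhoff's current law---capped at its two ends by a bounded set of vortices; the number of rungs should grow linearly in $s$, both so that construction principle (C3) is met (each nonzero group element or its inverse used exactly once) and so that the derived graph acquires $12s+2$ vertices with exactly one nonadjacent pair. That pair is necessarily formed by two of the ``lettered'' vertices arising from the vortices, since in a valid derived graph the lettered vertices are pairwise nonadjacent while every other pair of vertices is adjacent. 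Once $D$, $\phi$, and $\lambda$ are written down explicitly, the verification is largely mechanical: (C1) and (C6) are read off the drawing, (C3)--(C5) are arithmetic in the group, and the one delicate point, (C2), is checked by tracing the boundary of $\phi$'s single face once around and confirming that the resulting log is a single closed walk traversing every arc twice. Rule~R* then certifies that the derived rotation system describes a triangular orientable embedding, and the additive rule shows its $1$-skeleton is $K_{12s+2}$ with one edge removed.

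I expect the main obstacle to be the usual one for arguments of this type: producing a single family of current graphs that works uniformly in $s$ and simultaneously satisfies (C2) and (C3). There is, in addition, a parity subtlety in reconciling the requirement of exactly one missing edge (which wants the ``lettered'' part to consist of precisely two all-adjacent vortex vertices, hence a current group whose order has the wrong parity) with construction principle (C5), which dictates where an order-two group element may sit; in practice this forces the construction to break into a small number of residue classes of $s$, each handled by its own current graph whose single-face trace must then be checked separately. That case-by-case face trace---not the underlying algebra---is where the real care is needed. For the base case $s=1$ (the graph $K_{14}-K_2$) the current graph is small enough to verify the trace by hand, which is a good check on the general pattern.

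Finally, it is worth recording why a couple of more elementary routes do not shortcut this. Deleting a vertex from a triangulation of $K_{12s+3}$ (which exists, as $12s+3\equiv 3\pmod{12}$) yields an embedding of $K_{12s+2}$, but on $S_{12s^2-s}$, whose genus is too large by $2s$; and deleting an edge from a minimum genus embedding of $K_{12s+2}$ would only lower the genus if that edge bounded the same face on both sides and that face then split into two triangles, which requires control over the face structure of minimum genus embeddings of $K_{12s+2}$ that is not available before the present theorem is proved. So a dedicated current-graph construction really does seem to be needed.
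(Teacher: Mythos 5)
The paper does not prove this statement at all---it imports it from Jungerman and from Ringel's book, which is why it carries an attribution in place of a proof. Your proposal correctly identifies the method those sources use (an index-one current graph over a cyclic group of order $12s$ whose vortices supply the single nonadjacent pair, with the order-two element placed at a degree-one vertex per (C5)), and your preliminary bookkeeping is essentially sound: the target surface is $S_{12s^2-3s}$, one below $\gamma(K_{12s+2})$, and neither deleting a vertex from a triangulation of $K_{12s+3}$ nor deleting an edge from a minimum genus embedding of $K_{12s+2}$ can substitute for a dedicated construction. (Minor slip: the vertex-deletion route overshoots the minimum genus by $2s-1$, not $2s$.)

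As a proof, however, the proposal has a decisive gap: the current graphs are never exhibited. The entire content of the theorem is the explicit family $\langle D,\phi,\lambda\rangle$ together with the verification of (C1)--(C6), above all the single-face condition (C2), and you yourself flag the ``case-by-case face trace'' as where the real care is needed without carrying it out for even one value of $s$ (including the $s=1$ base case you propose as a sanity check). The statement ``once $D$, $\phi$, and $\lambda$ are written down explicitly, the verification is largely mechanical'' is true, but writing them down \emph{is} the theorem; nothing in the proposal establishes that a uniform family satisfying (C2) and (C3) simultaneously actually exists, which is precisely the existence claim being proved. To close the gap you would need either to draw the ladder-plus-end-vortex current graphs with their current assignments for each residue class of $s$ required and trace the single face, or to do what the paper does and cite the explicit constructions of Jungerman and Ringel.
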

\begin{theorem}[Youngs~\cite{Youngs-3569} or Ringel~{\cite[\S 9.2]{Ringel-MapColor}}]
For $s \geq 0$, there exists a triangular embedding of $K_{12s+5}-K_2$.
\end{theorem}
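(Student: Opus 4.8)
The plan is to prove this the way such triangulations are always produced in the Map Color Theorem: by exhibiting a suitable \emph{current graph}. First I would pin down the target. If $K_{12s+5}-K_2$ triangulates a surface $S_g$, then by Proposition~\ref{prop-trianglebound}, with $|V| = 12s+5$ and $|E| = \binom{12s+5}{2}-1 = 72s^2+54s+9$, one gets $g = 12s^2+3s = 3s(4s+1)$; this is exactly one less than $I(12s+5) = 12s^2+3s+1$, which is precisely why $K_{12s+5}$ itself needs one extra handle (its additional adjacency) beyond such an embedding. So the goal is a valid index-$1$ current graph whose derived embedding has $12s+5$ vertices, all pairwise adjacent except for a single pair.

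Because $12s+5$ is odd there is no element of order $2$, so type (T2) vortices are unavailable, and a vortex-free current graph over $\Z_{12s+5}$ would derive all of $K_{12s+5}$, which has no triangular embedding at all. I would therefore take the current group to be $\Gamma = \Z_{12s+3}$ and introduce two vortices $x$ and $y$: in the derived embedding the $12s+3$ numbered vertices form a $K_{12s+3}$, each of $x$ and $y$ is joined to all of them, and $x,y$ are mutually nonadjacent, so the result is precisely $K_{12s+5}-K_2$. Construction principle (C3) forces the current graph to have $6s+1$ arcs (one representative from each pair $\{k,-k\}$ in $\Z_{12s+3}\setminus\{0\}$), and, writing $V_3$ and $V_1$ for the degree-$3$ and degree-$1$ vertices, the handshake identity $3|V_3| + |V_1| = 12s+2$ combined with the requirement of exactly one nonadjacent pair forces $|V_1| = 2$; these two degree-$1$ vertices must be the vortices, necessarily of type (T1), so each carries a generator of $\Z_{12s+3}$ as its current. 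The remaining $4s$ vertices form a ladder-like skeleton of degree-$3$ KCL vertices whose length grows linearly in $s$, with currents arranged along it so as to satisfy (C3), (C4) and, crucially, to make $\phi$ a one-face embedding (C2).

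Granting a valid current graph, the rest is mechanical: trace the single face to read off the log, apply the additive rule to generate rows $0,1,\dots,12s+2$ and the manufactured rows for $x$ and $y$, and invoke the equivalence between Rule R* and triangularity. Validity of the current graph guarantees Rule R*, so the rotation system describes a triangular embedding of $K_{12s+5}-K_2$ on $S_{3s(4s+1)}$. The degenerate base case $s=0$ (where $K_5-K_2$ triangulates the sphere) I would simply verify by hand rather than force into the family.

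The main obstacle is the standard one for this style of argument: producing a \emph{single uniform family} of current graphs, indexed by $s$, and verifying the one-face condition (C2) for all $s$ simultaneously. In practice this means giving the ladder a carefully chosen ``zig-zag'' current pattern and then either tracking the face boundary symbolically or splitting into a bounded number of residue classes of $s$, while separately confirming that the two pendant arcs carry generators of $\Z_{12s+3}$. Since this is classical, an acceptable alternative to re-deriving everything is to cite the current graphs of Youngs~\cite{Youngs-3569} and Ringel~{\cite[\S9.2]{Ringel-MapColor}}, which already realize exactly this construction.
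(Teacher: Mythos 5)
The paper does not actually prove this statement; it imports it wholesale from Youngs and from Ringel~{\cite[\S 9.2]{Ringel-MapColor}}, so the comparison here is with the classical construction you are trying to reconstruct. Your framing is sound up to a point: the genus count $g=12s^2+3s$, the identification of the target as $12s+3$ mutually adjacent numbered vertices plus two mutually nonadjacent vortex vertices each adjacent to all numbered ones, and the elimination of $\Z_{12s+5}$ and of type (T2) vortices are all correct. The gap is that the current graph you then specify --- index $1$, group $\Z_{12s+3}$, exactly two degree-$1$ vortices of type (T1), all other vertices trivalent and satisfying KCL --- cannot exist for $s\ge 1$. The sum of the excesses over \emph{all} vertices of a current graph is zero, since each arc contributes $+\gamma$ at its head and $-\gamma$ at its tail. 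If every degree-$3$ vertex satisfies KCL, the excesses of your two vortices must be negatives of one another. But a degree-$1$ vortex has excess equal, up to sign, to the current on its unique incident arc, so the two pendant arcs would carry currents from the same inverse pair $\{a,-a\}$, violating (C3), which insists that each pair be represented exactly once. The only escape is for the two pendant arcs to be the same arc, i.e.\ for the current graph to be a single edge joining $x$ to $y$; that indeed works over $\Z_3$ and recovers the planar triangulation of $K_5-K_2$, but for $s\ge 1$ one needs $6s+1>1$ arcs. Within the construction principles (C1)--(C6) of this paper, no index $1$ current graph yields $K_{12s+5}-K_2$ for $s\ge 1$, since two type (T1) vortices are the only way to produce exactly two fully adjacent, mutually nonadjacent extra vertices.

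This obstruction is precisely why Case 5 is classically handled with current graphs of higher index: Youngs' solution (the one Ringel reproduces in \S 9.2, a chapter devoted to index $2$ and $3$ constructions) uses index $3$ current graphs over $\Z_{12s+3}$ in which the two vortices have degree $3$, with one corner on each of the three faces, so that each vortex letter appears once in each face log and the manufactured rows are adjacent to every numbered vertex. A degree-$3$ vortex decouples the excess from any single arc's current, so the parity argument above no longer applies; the present paper alludes to this feature of Youngs' constructions in the remark following Lemma~\ref{lem-addpath}. Consequently your closing fallback --- that the cited references ``already realize exactly this construction'' --- is not accurate: they realize a genuinely different one, and the index $1$ family you propose cannot be built.
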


From one of these embeddings, arbitrarily adding the missing edge causes two triangular faces to combine into an 8-sided face. Figure~\ref{fig-8handle} shows this operation along with how the orientation of the two participating faces affect the final nontriangular face. Achieving the other face distributions requires a few small modifictions. We prove the following using those embeddings:

\begin{figure}[!ht]
\centering
\includegraphics{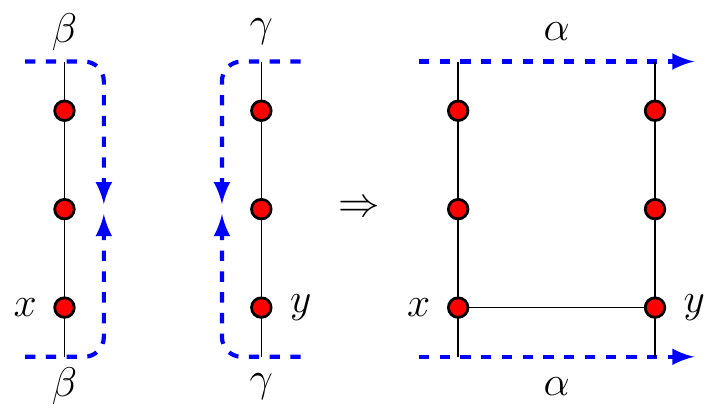}
\caption{Adding an edge with the help of one handle merges two triangular faces together. The pairs of thick dashed arrows labeled with the same letters are identified together.}
\label{fig-8handle}
\end{figure}

\begin{proposition}
For $s \geq 1$, there exists embeddings of type $(8)$, $(7,4)$, $(6,5)$, $(6,4,4)$, $(5,5,4)$, $(5,4,4,4)$, and $(4,4,4,4,4)$ of $K_{12s+2}$ and $K_{12s+5}$.
\label{prop-knk2}
\end{proposition}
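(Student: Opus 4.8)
\section*{Proof proposal}

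The plan is to start from the triangular embeddings of $K_{12s+2}-K_2$ and $K_{12s+5}-K_2$ supplied by Theorem~\ref{thm-case2} and its $12s{+}5$ analogue, and to add the single missing edge $(a,b)$ with the help of one handle. Since $n \equiv 2,5 \pmod{12}$ we have $t(n)=5$, and the seven target types are precisely those coming from the seven partitions of $5$; by the earlier proposition (an embedding of type with at most five extra edges is of minimum genus) every embedding produced below is automatically minimum genus, so it suffices to realize the face distributions. First I would dispose of type $(8)$: routing $(a,b)$ through a handle joining a triangle at $a$ to a triangle at $b$ merges those two faces into a single octagon and leaves everything else triangular, which is exactly the operation of Figure~\ref{fig-8handle}.

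Next I would obtain the remaining easy types by surgery on this octagon and its descendants, using chord exchanges. A single chord exchange removes an edge lying between two triangular faces (merging them into a quadrilateral) and reinserts it as a chord of the octagon; choosing the two endpoints of the reinserted chord at cyclic distance $2$, $3$, or $4$ along the octagon splits it into a triangle and a $7$-gon, into a quadrilateral and a hexagon, or into two pentagons respectively, yielding types $(7,4)$, $(6,4,4)$, and $(5,5,4)$. Applying the same idea to the hexagon of the type $(6,4,4)$ embedding (splitting it into a triangle and a pentagon, or into two quadrilaterals) gives types $(5,4,4,4)$ and $(4,4,4,4,4)$. Throughout, the edge used for each chord exchange must be chosen so that no self-loop or parallel edge is created; since these triangulations have many triangular faces this is straightforward to arrange, and the triangular portion of each resulting rotation system can be verified against Rule R* while the nontriangular faces are read off directly.

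The one type that resists this cascade is $(6,5)$, and I expect it to be the main obstacle. The octagon cannot simply be cut into a hexagon and a pentagon, for parity reasons: dividing the octagon by $c$ non-crossing chords produces $c+1$ faces whose lengths sum to $8+2c$, and if a hexagon and a pentagon were among them the remaining $c-1$ (triangular) faces would have to satisfy $3(c-1)=2c-3$, i.e. $c=0$, which is absurd; moreover any single chord exchange leaves behind a spurious quadrilateral. So I would instead pass through the type $(7,4)$ embedding and perform a two-step move: enlarge the quadrilateral to a pentagon by deleting one of its edges (absorbing the adjacent triangle), and then reinsert that deleted edge as a chord of the $7$-gon so as to slice off a triangle and leave a hexagon, producing type $(6,5)$ with no leftover quadrilateral. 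The delicate point, and what I expect to require genuine care, is arranging that the deleted edge has both endpoints on the $7$-gon in positions realizing the desired hexagon/triangle split without recreating a parallel edge; this can be secured by choosing the triangles used for the handle and for the first chord exchange appropriately, equivalently by locating in the given triangulation of $K_n-K_2$ a short configuration of faces around $a$ and $b$ with the needed incidences. Verifying that such a configuration is always available reduces to inspecting the uniform (ladder-based) structure of the explicit embeddings of Theorem~\ref{thm-case2} and its analogue, and this is the step I would expect to occupy most of the work.
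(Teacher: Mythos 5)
Your treatment of six of the seven types is essentially the paper's: add the missing edge $(x,y)$ over a handle joining two triangles to get the type $(8)$ octagon, then perform chord exchanges whose endpoints lie at cyclic distance $2$, $3$, or $4$ on the octagon (and then on the resulting hexagon) to realize $(7,4)$, $(6,4,4)$, $(5,5,4)$, $(5,4,4,4)$, and $(4,4,4,4,4)$; the paper does exactly this with the octagon $[x,a,b,x,y,c,d,y]$ and the explicit chords $(a,y)$, $(a,d)$, $(a,c)$, etc., using the hypothesis $s\geq 1$ only to guarantee that the four auxiliary vertices $a,b,c,d$ can be chosen distinct so that no parallel edges arise. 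You also correctly identify $(6,5)$ as the one type that cannot be reached by inserting chords into the octagon alone.

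The gap is that your $(6,5)$ construction is not actually carried out. You propose deleting an edge of the quadrilateral in the $(7,4)$ embedding and reinserting it as a chord of the $7$-gon, but this requires that some edge of that quadrilateral have \emph{both} endpoints on the $7$-gon at cyclic distance two, and you defer the existence of such a configuration to an unperformed inspection of the ladder structure of the underlying triangulations of $K_n-K_2$. There is no reason this should hold for a generic choice of the initial triangles, and verifying it case-by-case against the explicit current-graph constructions would be a substantial undertaking. The paper avoids this entirely by changing the handle placement rather than the subsequent surgery: it attaches the handle to two triangles $[x,a,b]$ and $[y,b,c]$ \emph{sharing a vertex} $b$ (with $a\neq c$), so that $b$ occurs twice on the resulting octagon. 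A single chord exchange $\pm(a,b)$ then deletes the boundary edge $(a,b)$ of the octagon (absorbing the adjacent triangle into a $9$-gon, so no spurious quadrilateral is created) and reinserts $(a,b)$ as a chord to the \emph{other} occurrence of $b$, splitting the $9$-gon as $6+5$. The existence of the required pair of triangles follows from a one-line count on the rotation at $b$ (which has at least $13$ neighbors when $s\geq 1$), not from any structural analysis of the embeddings; note that this count genuinely fails for $K_5$, which is why $(6,5)$ and $(5,4,4,4)$ are unrealizable there. Your argument as written does not establish the $(6,5)$ case.
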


\begin{proof}
Let $x$ and $y$ be the two nonadjacent vertices. The general approach is to exchange chords in the 8-sided face, which does not increase the genus of the embedding. Some of these constructions are illustrated in Figure~\ref{fig-5444-65}.

(Types $(7, 4)$, $(6, 4, 4)$, $(5, 5, 4)$, and $(4, 4, 4, 4, 4)$) Since $s \geq 1$, $x$ and $y$ have at least 12 neighbors. Because there are many neighbors, we can find faces $[x, a, b]$ and $[y, c, d]$ such that $a, b, c, d$ are all distinct vertices. After attaching these two faces with a handle and adding the edge $xy$, the resulting $8$-sided face will be $[x, a, b, x, y, c, d, y]$. Exchanging the following sets of chords yields the following embeddings:

\begin{itemize}
\item type $(7,4)$: $\pm(a,y)$,
\item type $(6,4,4)$: $\pm(a,d)$,
\item type $(5,5,4)$: $\pm(a,c)$,
\item type $(5,4,4,4)$: $\pm(a,d), \pm(b,y)$, and 
\item type $(4,4,4,4,4)$: $\pm(a,d), \pm(b,c)$.
\end{itemize}

\begin{figure}[!ht]
\centering
\includegraphics{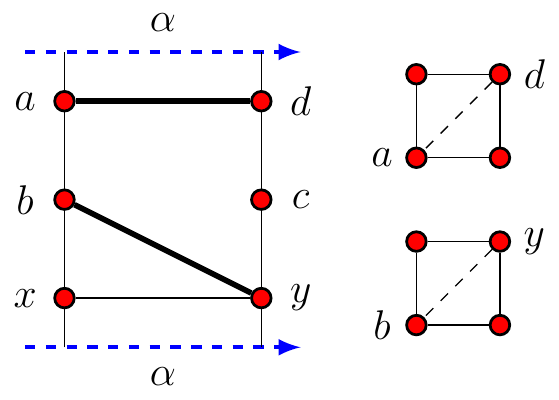}
\quad \quad \quad
\includegraphics{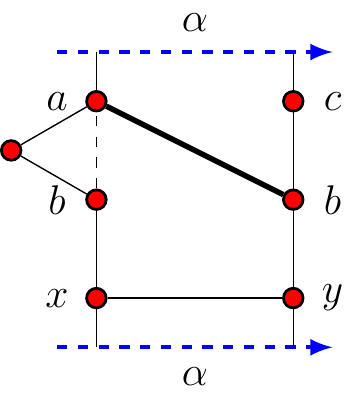}
\caption{Finding embeddings of types $(5,4,4,4)$ and $(6,5)$.}
\label{fig-5444-65}
\end{figure}

(Type $(6,5)$) We assert that there exist faces $[x, a, b]$, $[y, b, c]$, where $a \neq c$. Since $s \geq 1$, vertex $b$ has at least 13 neighbors. Without loss of generality, we may assume that the rotation at $b$ is of the form 
$$\begin{array}{rrrrrrrrrrrrrrrrrrrrrrrrrrrrrrrr}
b. & \dots & y & c & \dots & a & x & \dots,
\end{array}$$
where there are at least two other vertices in between $y$ and $x$ in the cyclic sequence. Hence, these triangles incident with $b$ are the desired faces. Adding the edge $(x,y)$ using those two faces and exchanging the chord $(a,b)$ yields an embedding of type $(6, 5)$. 
\end{proof}

\begin{remark}
One might ask why we need $a \neq c$ for the type $(6,5)$ construction. If they are the same vertex, then the edge $(a,b)$ appears twice on the $8$-sided face. Deleting that edge causes the genus to decrease and the face to split in two, invalidating the ``locally planar'' intuition that our drawings are based on. 
\end{remark}

We note that $K_5$, despite there being a triangular embedding of $K_5-K_2$, does not realize all its predicted face distributions. An exhaustive enumeration produced the following:

\begin{proposition}[see Gagarin \emph{et al.}~\cite{Gagarin-Torus} or White~{\cite[p.270]{White-GraphGroups}}]
$K_5$ has embeddings of type $(8)$, $(7,4)$, $(6,4,4)$, $(5,5,4)$, and $(4,4,4,4,4)$, but no embeddings of type $(6,5)$ or $(5,4,4,4)$.
\end{proposition}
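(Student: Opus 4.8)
\section*{Proof proposal}

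The plan is to handle the existence claims by exhibiting explicit rotation systems, and the two non-existence claims by a structural case analysis whose backbone is Lemma~\ref{lem-5simp}.

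For the five realizable types $(8)$, $(7,4)$, $(6,4,4)$, $(5,5,4)$, and $(4,4,4,4,4)$, I would simply write down one genus-$1$ rotation system of $K_5$ of each type and verify its face distribution by tracing faces. One convenient way to produce these is to mimic the proof of Proposition~\ref{prop-knk2}: start from the planar triangulation of $K_5-K_2$, add the missing edge through a handle to obtain the octagonal embedding $[x,a,b,x,y,c,d,y]$, and perform chord exchanges inside the octagon. The hypothesis $s\ge 1$ there, used to guarantee that $a,b,c,d$ are distinct, fails for $K_5$ (where $x$ and $y$ share all three of their neighbours), but the five types above only require chord exchanges that the octagon tolerates even when some of $a,b,c,d$ coincide, so small ad hoc modifications suffice; alternatively one just displays five rotation systems directly.

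The substance is the non-existence of types $(6,5)$ and $(5,4,4,4)$. In each of these there is exactly one face of length $>3$ besides a $5$-face, and by Lemma~\ref{lem-5simp} that $5$-face is simple, hence a Hamiltonian pentagon; relabelling, I would fix it to be $[1,2,3,4,5]$, which freezes the five ``pentagon'' edges $12,23,34,45,15$ and the five ``pentagram'' edges. The governing bookkeeping principle is that every edge of $K_5$ lies on exactly two faces, so the pentagon already accounts for one incidence of each of its five edges. For type $(6,5)$: Lemma~\ref{lem-5simp} forces the hexagon to have exactly one repeated vertex (six corners cannot sit at five distinct vertices), so the hexagon has the form $[a,b,x,c,d,x]$ with $\{a,b,c,d\}$ the four non-$x$ vertices; modulo the residual pentagon symmetry this leaves the three perfect matchings $\{ab,cd\}$ of $\{1,2,3,4\}$ as cases. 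In two of them the incidence count leaves some pentagram edge with no triangle available to carry its remaining incidence; in the third, the three triangles are forced, and one then orients the five faces (recording that the two faces along each edge traverse it in opposite directions) and reaches a contradiction on a pentagram edge that both of its faces traverse the same way. For type $(5,4,4,4)$: since $K_5$ is $3$-connected, no edge bounds one face twice, so each of the three $4$-faces is a genuine $4$-cycle; a corner count at each vertex shows the two vertices missing from the triangle lie on all three quadrilaterals, and each triangle vertex is omitted by exactly one quadrilateral, so the quadrilaterals' vertex sets are pinned down from the position of the triangle relative to the pentagon (two cases, ``consecutive'' or ``two-plus-one''); edge counting then determines which $4$-cycle each quadrilateral is, and the same orientation-consistency argument kills the surviving configurations.

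The main obstacle is keeping this casework compact: the incidence counts collapse most branches immediately, but a few ``count-consistent'' face sets survive and are eliminated only by the orientation check, so the write-up must carry orientations carefully rather than arguing purely about unoriented face boundaries. As a fallback — and this is essentially how the cited references proceed — the entire statement can be obtained by a brute-force enumeration of the $6^5$ rotation systems of $K_5$, reduced modulo the order-$240$ group generated by vertex relabellings and reflection, recording the face distribution in each case.
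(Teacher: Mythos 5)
The paper does not actually prove this proposition: it quotes it from the literature (Gagarin \emph{et al.}, White), where it was obtained by exhaustive enumeration of the embeddings of $K_5$, and the only argument it adds is the remark that the chord-exchange constructions of Proposition~\ref{prop-knk2} for types $(6,5)$ and $(5,4,4,4)$ cannot be run on the planar embedding of $K_5-K_2$ because the vertices have too few neighbours (which explains why that construction fails, not why no embedding exists). Your proposal is therefore a genuinely different, self-contained route: explicit rotation systems for the five realizable types, and a structural elimination of $(6,5)$ and $(5,4,4,4)$ built on Lemma~\ref{lem-5simp}, edge--face incidence counting, and orientation consistency. The skeleton is sound --- the $5$-face must be a Hamiltonian pentagon, the $6$-face must be of the form $[a,b,x,c,d,x]$ with exactly one repeated vertex, and the residual casework is small --- and your fallback (enumeration of the $6^5$ rotation systems modulo the symmetry group) is exactly what the cited sources do, so the truth of the statement is not in doubt. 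One justification does need repair: you claim $3$-connectivity of $K_5$ prevents an edge from bounding a single face twice, but that is a planar (Whitney-type) fact which fails on surfaces of positive genus --- indeed $K_5$ itself, being upper-embeddable with even Betti number, has a one-face embedding in $S_3$ in which every edge borders the unique face twice. The conclusion you want (each $4$-face is a simple $4$-cycle) is still correct, but for the reason already used in Lemma~\ref{lem-5simp}: a repeated vertex on a $4$-face forces a subsequence $[\dots v,w,v\dots]$ and hence a degree-$1$ vertex or a parallel edge. With that fix, and with the orientation bookkeeping actually carried through (the one place where your sketch defers real work), your argument should close; what it buys over the paper is a human-checkable proof in place of a citation to a computer search.
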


It can be verified that the constructions in Proposition~\ref{prop-knk2} for the latter two cases cannot be applied to the essentially unique planar embedding of $K_5-K_2$. Each vertex has too few neighbors. 

\section{Case 9}

In the previous section, we found nearly triangular embeddings by taking a triangular embedding and adding a single edge. Jungerman's solution for Case 9 also has a simple additional adjacency solution that involves only one extra edge. We say that $G_n$ is a \emph{split-complete graph} if we can label its vertices $1, 2, \dotsc, n-1, x_0, x_1$ such that 

\begin{itemize}
\item $1, \dotsc, n-1$ are all pairwise adjacent, and
\item the neighbors of $x_0$ and the neighbors of $x_1$ form a partition of $\{1, \dotsc, n-1\}$.
\end{itemize}

The aforementioned solution of Jungerman employed a beautiful construction for split-complete graphs.

\begin{theorem}[see Ringel~{\cite[\S6.5]{Ringel-MapColor}}]
For $s \geq 0$, there exists a triangular embedding of a split-complete graph $G_{12s+9}$.
\label{thm-case9graphs}
\end{theorem}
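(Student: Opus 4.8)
The plan is to realize $G_{12s+9}$ as the graph derived from an explicit family of index-$1$ current graphs, one for each $s \ge 0$, following the presentation in Ringel~\cite[\S6.5]{Ringel-MapColor}. First I would write down a family of directed graphs $D_s$ with rotation systems, over an appropriate cyclic group, in which every vertex has degree $3$ or $1$, every degree-$3$ vertex satisfies Kirchhoff's current law, and the degree-$1$ vertices form a small fixed set of vortices --- crucially including one vortex of type (T2), which is what will split off the two extra vertices $x_0,x_1$ --- together with the customary unlabelled degree-$1$ endpoint carrying the order-two current. Structurally $D_s$ is a long ``ladder'' (cf.\ Figure~\ref{fig-ladder}) whose two ends are capped by fixed gadgets carrying the vortices: the currents on the ladder rungs are forced by Kirchhoff's law, and the gadget currents are fixed once and for all so that each nonzero group element or its inverse is used exactly once.

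Given such a family, the verification splits into a routine local part and one global part. Principles (C1), (C4), (C5), (C6) are immediate from the local picture; (C3) is a finite check inside the two gadgets together with the observation that the ladder rungs sweep out the remaining group elements in arithmetic progression. The single genuinely global condition is (C2), that each $D_s$ has a one-face embedding: I would verify it by tracing the face boundary, showing that the trace zig-zags through all the rungs with a pattern periodic in $s$ and undergoes a fixed ``transition'' at each gadget, so that the whole boundary closes into a single cycle for every value of $s$. Equivalently, and more in the spirit of the rest of this paper, I would simply exhibit the log of the conjectural single face as a closed-form function of $s$, generate the numbered rows by the additive rule and the lettered rows by hand so that Rule~R* holds throughout, and then appeal to the fact that Rule~R* characterises orientable triangular embeddings.

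A valid current graph then yields a triangular embedding of a graph whose numbered vertices are pairwise adjacent and whose lettered vertices are organised by vortex type; in particular the (T2) vortex contributes precisely two vertices $x_0$ and $x_1$, each joined to one block of a bipartition of the numbered vertices. Matching this against the definition of a split-complete graph identifies the graph as $G_{12s+9}$, and substituting its vertex and edge counts into Proposition~\ref{prop-trianglebound} confirms that a triangulation is numerically possible and records the genus of the host surface. The hard part, as everywhere in this subject, is not any individual step but producing the current graph (equivalently, the log) in closed form uniformly in $s$ and establishing the one-face property for the whole family at once; the additive rule, the Rule~R* check, and the final identification with $G_{12s+9}$ are then routine.
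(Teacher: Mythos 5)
You have correctly identified the general mechanism behind Theorem~\ref{thm-case9graphs}: a current-graph construction in which a vortex of type (T2) is what manufactures the two vertices $x_0,x_1$ with complementary neighborhoods, after which the additive rule, the manufactured lettered rows, and Rule~R* deliver the triangular embedding exactly as set up in Sections~3--4. Note, however, that the paper itself offers no proof of this statement at all --- it is a citation to Ringel~\cite[\S6.5]{Ringel-MapColor} --- so the only thing to assess is whether your sketch would stand on its own. It would not, for the simple reason that every load-bearing object is deferred: the family $D_s$, its currents, and above all the verification of (C2) are all described as things you ``would'' write down. Principles (C1), (C3)--(C6) are indeed routine once a candidate is in hand, but (C2) is a global condition that fails for generic current assignments on a ladder, and establishing it uniformly in $s$ (by an induction on the rungs or a closed-form log) is precisely where the entire mathematical content of the theorem lives. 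A proof that postpones exactly that step has proved nothing.

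There is also a concrete structural error in the blueprint as you state it. For the numbered vertices of $G_{12s+9}$ to form $K_{12s+8}$ and for a (T2) vortex to exist, the group must be $\Z_{12s+8}$; principle (C3) then forces the current graph to have exactly $6s+4$ arcs (one per pair $\{\gamma,-\gamma\}$, including the self-paired element $6s+4$), and (C5) together with the (T2) vortex forces at least two vertices of degree $1$, the rest having degree $3$. The handshake lemma gives $v_1+3v_3=12s+8$, hence $|V|-|E| = (12s+8-2v_3)-(6s+4) = 6s+4-2v_3$ is \emph{even}, so by the Euler polyhedral equation the number of faces of any orientable embedding of such a current graph is even. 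An index-$1$ current graph of the kind you describe therefore cannot exist; the construction in Ringel~\cite[\S6.5]{Ringel-MapColor} necessarily has index at least $2$ (with the two face logs supplying the even- and odd-numbered rows, consistent with how $x_0$ and $x_1$ split the vertex set). This is exactly the sort of detail that cannot be recovered by appealing to ``the customary'' setup, and it underscores why the explicit family must actually be exhibited rather than assumed.
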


In the proof of the Map Color Theorem, embeddings were expressed in dual form, where the vertices were regarded as ``countries'' drawn on surfaces. The countries $x_0$ and $x_1$ were then connected with a handle and then merged into one ``cylindrical region.'' Upon closer examination, the resulting embedding in primal form is in fact nearly triangular.

\begin{proposition}
If there exists a triangular embedding of a split-complete graph $G_n$, then there exists an embedding of type $(6)$ of $K_n$.
\label{prop-splitcomplete}
\end{proposition}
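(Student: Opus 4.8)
The plan is to start from the given triangular embedding of the split-complete graph $G_n$, with vertices $1, \dotsc, n-1$ pairwise adjacent and with $x_0, x_1$ having complementary neighborhoods partitioning $\{1, \dotsc, n-1\}$, and to add the missing edges between $x_0$ and $x_1$ — that is, all edges from $x_0$ to the neighbors of $x_1$, and vice versa — using a single handle, merging the two vortex-like vertices into one. Concretely, I would look at the rotations (rows) $x_0$ and $x_1$ in the triangular rotation system. Since the embedding is triangular, every corner at $x_0$ sits in a triangle $[x_0, i, j]$ with $i, j$ consecutive neighbors of $x_0$, and similarly for $x_1$. The idea is to excise a small disk near $x_0$ and a small disk near $x_1$ and glue in a handle (a cylinder), which geometrically is the ``dual'' operation of connecting two countries described in the paragraph preceding the statement; the new vertex, call it $x$, then has a rotation obtained by splicing row $x_0$ and row $x_1$ together, and becomes adjacent to all of $1, \dotsc, n-1$, yielding $K_n$.

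The key steps, in order: (1) Recall/derive the exact form of rows $x_0$ and $x_1$ in Jungerman's construction — or, better, argue generically: in any triangular embedding of a split-complete graph, $x_0$ has some cyclic rotation $(a_1, a_2, \dotsc, a_p)$ through its neighbors and $x_1$ has rotation $(c_1, \dotsc, c_q)$ through its neighbors, with $\{a_i\} \sqcup \{c_j\} = \{1, \dotsc, n-1\}$. (2) Perform the handle attachment: choose a corner of row $x_0$, say between $a_p$ and $a_1$, lying in triangle $[x_0, a_p, a_1]$, and a corner of row $x_1$ between $c_q$ and $c_1$ in triangle $[x_1, c_q, c_1]$; cut both faces open along an arc from the vertex into the face interior and identify the two slits via a cylinder. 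The merged vertex $x$ then has rotation $(a_1, a_2, \dotsc, a_p, c_1, c_2, \dotsc, c_q)$, so it is adjacent to every numbered vertex, giving $K_n$. (3) Track the faces: all triangular faces not incident with $x_0$ or $x_1$ survive unchanged; each triangle $[x_0, a_i, a_{i+1}]$ becomes $[x, a_i, a_{i+1}]$ and likewise for $x_1$; the two triangles $[x_0, a_p, a_1]$ and $[x_1, c_q, c_1]$ that were cut open, together with the annulus, merge into a single face. (4) Read off the boundary of this single new face and check it has length $6$: it should be $[x, a_p, a_1, x, c_q, c_1]$ (up to relabeling), i.e. a hexagon with the repeated vertex $x$ appearing in the ``opposite'' pattern permitted by Lemma~\ref{lem-5simp}. (5) Confirm via the Euler polyhedral equation that adding these $t(n) = \lvert\{a_i\}\rvert + \lvert\{c_j\}\rvert$-ish... more precisely, that the count of edges added matches the drop of one face (two triangles plus annulus-merge, minus the handle's effect on genus) so that genus stays at $I(n)$; this is automatic since by Proposition~\ref{prop-trianglebound}-type bookkeeping the embedding of type $(6)$ of $K_n$ has exactly $t \equiv 3 \pmod 6$ extra edges and is thus minimal by the earlier proposition.

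I expect the main obstacle to be step (4): verifying that the single merged face is genuinely a hexagon rather than something longer or degenerate. This requires being careful about the orientations — the cylinder must be glued so that orientations match (as in Figure~\ref{fig-8handle} for the type $(8)$ analysis), and one must make sure the two triangles chosen at $x_0$ and $x_1$ are distinct faces and share no edge, so that merging them with the annulus produces one hexagonal face and not a pinched or self-identified region (the kind of pathology flagged in the Remark after Proposition~\ref{prop-knk2}). Since $x_0$ and $x_1$ have disjoint neighbor sets, any triangle at $x_0$ and any triangle at $x_1$ automatically share no vertex other than impossible ones, so they are distinct and edge-disjoint; this disjointness is exactly what makes the construction clean, and I would emphasize it. A secondary, more bookkeeping-ish obstacle is confirming that Rule R* (or direct face-tracing) still certifies the non-$x$ part of the embedding as triangular after relabeling $x_0, x_1 \mapsto x$ — but this is immediate because no rotation other than the merged one changes, and the triangles at $x_0, x_1$ simply inherit the new label. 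So the real content is the local picture of the handle attachment and the orientation check, which I would present with a figure analogous to Figure~\ref{fig-8handle}.
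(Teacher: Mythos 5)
Your proposal is correct and is essentially the paper's argument: the paper adds the edge $(x_0,x_1)$ via a handle joining a triangle at $x_0$ to a triangle at $x_1$ (producing an 8-sided face in which the new edge appears twice) and then contracts that edge, which is exactly your ``tube the two vertices together'' operation, yielding the same merged vertex $x$ with spliced rotation and the same hexagonal face $[x,a,b,x,c,d]$. The disjointness of the neighborhoods of $x_0$ and $x_1$, which you correctly emphasize, is what makes the contraction/identification produce a simple $K_n$.
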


\begin{proof}
Add the edge $x_0$ and $x_1$ arbitrarily as we did for Cases 2 and 5. Note that the newly added edge $(x_0,x_1)$ appears twice in the resulting 8-sided face. Locally contracting this edge leaves a 6-sided face, as in Figure~\ref{fig-splitcontract}.
\end{proof}

\begin{figure}[!ht]
\centering
\includegraphics{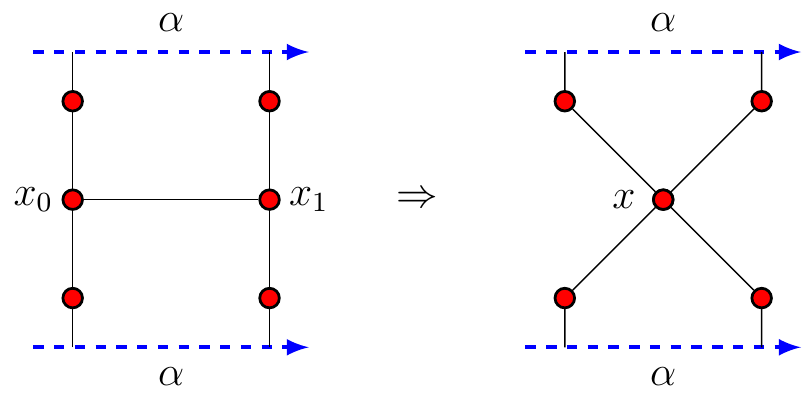}
\caption{Adding a handle to add an edge, and then contracting it to get a 6-sided face.}
\label{fig-splitcontract}
\end{figure}

\begin{corollary}
For $s\geq 0$, there exists a nearly triangular minimum genus embedding of $K_{12s+9}$. 
\end{corollary}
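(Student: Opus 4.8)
The plan is to combine the two results immediately preceding the statement with the minimality criterion established earlier. First I would invoke Theorem~\ref{thm-case9graphs} (Jungerman's construction) to obtain, for each $s \geq 0$, a triangular embedding of a split-complete graph $G_{12s+9}$. Then I would apply Proposition~\ref{prop-splitcomplete} to convert this triangular embedding into an embedding of $K_{12s+9}$ of type $(6)$, that is, one whose faces are all triangular except for a single $6$-sided face. By the definition of \emph{nearly triangular}, such an embedding already has at most one nontriangular face, so the only thing left to verify is that it attains the minimum genus.

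For minimality I would appeal to the earlier proposition stating that any embedding of a simple graph of type $(a_1,\dots,a_i)$ with $(a_1-3)+\dots+(a_i-3)\leq 5$ is of minimum genus. Here the unique nontriangular face has length $6$, contributing $6-3=3\leq 5$ extra edges, so the hypothesis holds and the type-$(6)$ embedding is indeed minimum genus. As a consistency check, $n=12s+9\equiv 9\pmod{12}$ lies in the residue class for which $t\equiv 3\pmod 6$, and in fact $t=3$ here, matching the Euler-equation prediction for this Case.

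Since essentially all of the work is already carried out in Theorem~\ref{thm-case9graphs} and Proposition~\ref{prop-splitcomplete}, there is no real obstacle; the corollary is an immediate consequence. If I had to identify where the substance lies, it is entirely in Jungerman's split-complete construction and in the handle-and-contract argument behind Proposition~\ref{prop-splitcomplete}. The present statement merely packages those facts into the form asked by Archdeacon and Craft, now confirmed for the residue class $n\equiv 9\pmod{12}$.
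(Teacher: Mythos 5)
Your argument is exactly the paper's intended one: the corollary is stated without proof precisely because it follows immediately from Theorem~\ref{thm-case9graphs} combined with Proposition~\ref{prop-splitcomplete}, with minimality supplied by the earlier criterion on types with at most five extra edges. Your consistency check on $t=3$ for $n\equiv 9\pmod{12}$ is a nice touch but adds nothing beyond what the paper already establishes.
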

\begin{corollary}
For $s\geq 0$, there exist embeddings of type $(6)$, $(5,4)$, and $(4,4,4)$ of $K_{12s+9}$. 
\end{corollary}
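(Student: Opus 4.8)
The plan is simply to chain together the two results just proved. First, Theorem~\ref{thm-case9graphs} supplies, for every $s \geq 0$, a triangular embedding of a split-complete graph $G_{12s+9}$. Feeding this embedding into Proposition~\ref{prop-splitcomplete} — adding the edge $(x_0,x_1)$ with a handle and then locally contracting it, as in Figure~\ref{fig-splitcontract} — yields an embedding of $K_{12s+9}$ of type $(6)$. That disposes of the first of the three claimed types.

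For the remaining two, I would invoke Proposition~\ref{prop-simple6} with $n = 12s+9$. Since $K_{12s+9}$ is simple with minimum degree $12s+8 \geq 2$, Lemma~\ref{lem-5simp} guarantees that the unique hexagonal face of the type $(6)$ embedding has the form $[a,v,w,a',x,y]$ with only $a$ and $a'$ possibly coinciding, which is exactly the hypothesis Proposition~\ref{prop-simple6} needs. Exchanging one of the chords $(v,w)$ or $(x,y)$ into the hexagon produces a type $(5,4)$ embedding, and exchanging both (as in Figure~\ref{fig-6to444}) produces a type $(4,4,4)$ embedding. Finally, each of the three embeddings has $t = t(12s+9) = 3 \leq 5$ extra edges, so by the minimality criterion derived from Proposition~\ref{prop-trianglebound} they are all of minimum genus, consistent with the preceding corollary.

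There is no real obstacle internal to this statement: the substance is entirely in Theorem~\ref{thm-case9graphs} (Jungerman's split-complete construction) and in the chord-exchange machinery of Proposition~\ref{prop-simple6}, both of which may be assumed. The only points that warrant a sentence of care are (i) verifying that in the passage from $G_{12s+9}$ to $K_{12s+9}$ the newly added edge $(x_0,x_1)$ indeed occurs twice on the resulting octagonal face, so that the local contraction in Proposition~\ref{prop-splitcomplete} is legitimate, and (ii) noting that the potential coincidence $a = a'$ in the hexagon does not obstruct the chord exchanges — precisely the scenario Lemma~\ref{lem-5simp} is designed to control.
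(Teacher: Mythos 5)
Your argument is exactly the paper's: Theorem~\ref{thm-case9graphs} feeds Proposition~\ref{prop-splitcomplete} to produce the type $(6)$ embedding, and Proposition~\ref{prop-simple6} (via Lemma~\ref{lem-5simp}) then yields the types $(5,4)$ and $(4,4,4)$. The proposal is correct and requires no changes.
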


\section{Case 6}

\begin{theorem}
For $s\geq 0$, there exists a nearly triangular minimum genus embedding of $K_{12s+6}$. 
\end{theorem}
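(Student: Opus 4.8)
\emph{Proof plan.} My plan is to deduce this from Proposition~\ref{prop-splitcomplete}: it suffices to produce, for every $s \ge 0$, a triangular embedding of a split-complete graph $G_{12s+6}$, since the proposition then converts it into an embedding of $K_{12s+6}$ of type $(6)$ --- a single hexagonal face, all others triangular --- which is nearly triangular and, its one nontriangular face accounting for only three extra edges, is of minimum genus. Before building anything I would do the bookkeeping: writing $G_{12s+6}$ on the vertex set $\{1,\dots,12s+5\}\cup\{y_0,y_1\}$ with $\{1,\dots,12s+5\}$ a clique and $N(y_0)\sqcup N(y_1)=\{1,\dots,12s+5\}$, it has $\binom{12s+6}{2}$ edges, so a triangular embedding lies on the surface of genus $12s^2+5s$, exactly one less than $\gamma(K_{12s+6})=12s^2+5s+1$; the handle added in the proof of Proposition~\ref{prop-splitcomplete}, followed by the genus-preserving edge contraction there, is then precisely what lands us at the minimum genus.

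The construction of a triangular $G_{12s+6}$ is the real content, and I would carry it out with a current graph on $\Z_{12s+5}$, so that the $12s+5$ numbered vertices form the clique while a vortex configuration at one end of a ladder-like current graph splits off $y_0$ and $y_1$. The key difficulty --- the reason Case~6 needs a fresh argument rather than mimicking Cases~2, 5, and 9 --- is that $\{1,\dots,12s+5\}$ has \emph{odd} size, so $N(y_0)$ and $N(y_1)$ must have the unequal sizes $6s+3$ and $6s+2$: a degree-$1$ vortex of type (T1) is adjacent to every numbered vertex, and one of type (T2) splits them into two \emph{equal} halves (and needs an even group besides). I would therefore design a nonstandard ``end gadget'' --- a modified degree-$1$ vortex, or a pair of cooperating low-degree vortices --- whose currents, together with those on the growing ladder, satisfy construction principles (C1)--(C6); tracing the single face then yields the rotation at vertex $0$ and, via the additive rule, the whole rotation system, and a check of Rule~R* (automatic from KCL away from the gadget) certifies that the derived embedding triangulates precisely $G_{12s+6}$.

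I expect two obstacles. The first is the end gadget: arranging the unbalanced split of $\{1,\dots,12s+5\}$ into parts of sizes $6s+3$ and $6s+2$ between $y_0$ and $y_1$ while keeping the current graph valid, and then verifying validity and the resulting adjacencies uniformly in $s$. The second is small cases, where a uniform family inevitably degenerates. In particular $s=0$ must be done by hand: $G_6$ contains $K_5$ and so has no planar (genus-$0$) triangular embedding, so instead I would exhibit a type-$(6)$ embedding of $K_6$ directly, for instance by deleting one vertex from a triangular embedding of $K_7$ in the torus, which fuses the six triangles around that vertex into a single hexagon and, because $I(7)=I(6)=1$, remains at minimum genus. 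If the generic family also degenerates at one or two further small values of $s$ --- plausibly $K_{30}$, which the introduction flags as arising from an index-$3$ current graph --- those are handled by individual (possibly higher-index) current graphs; in every case Propositions~\ref{prop-splitcomplete} and~\ref{prop-simple6} then also supply the companion embeddings of types $(5,4)$ and $(4,4,4)$.
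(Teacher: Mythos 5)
Your reduction is sound as far as it goes: if a triangular embedding of a split-complete graph $G_{12s+6}$ exists, Proposition~\ref{prop-splitcomplete} does yield a type-$(6)$ embedding of $K_{12s+6}$, your genus bookkeeping ($12s^2+5s$ versus $\gamma(K_{12s+6})=12s^2+5s+1$) is correct, and your $s=0$ fix (deleting a vertex from the toroidal $K_7$) is exactly what the paper does. But the entire content of the theorem for $s\ge 1$ has been deferred to the ``end gadget,'' and that is a genuine gap, not a routine verification. You correctly diagnose why the standard machinery fails --- a type (T2) vortex requires the group $\Z_{2n}$ and splits the numbered vertices into two \emph{equal} halves, whereas here the group $\Z_{12s+5}$ has odd order and the split must be $6s+3$ versus $6s+2$ --- but you offer no candidate vortex configuration, no validity argument, and no evidence that one exists for all $s$. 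The paper itself only knows a split-complete $G_{12s+6}$ for $s=1$ (Mayer's ad hoc $G_{18}$), and constructing such a family uniformly in $s$ would be a new result at least as hard as the theorem you are trying to prove. A plan whose central object is ``a nonstandard gadget to be designed'' cannot be assessed as correct.

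For comparison, the paper routes around this entirely for $s\ge 2$: it uses triangular embeddings of $K_{12s+6}-P_3$ (Gross's Theorem~\ref{thm-gross} for $s\ge 3$, extended to $s=2$ by Proposition~\ref{prop-30} via edge flips on Youngs's $K_{30}-K_3$ embedding), and then Lemma~\ref{lem-addpath} adds the three path edges back with a single handle, using the triangles on the ``diagonals'' $(a,c)$ and $(b,d)$ to leave one $6$-sided face. That is, instead of removing two vertices' worth of adjacencies (the split-complete picture), it removes three edges forming a path from the complete graph itself, which keeps the construction inside the well-developed $K_n-K_3$ current-graph theory. If you want to salvage your approach, you would need to either exhibit and validate the unbalanced vortex gadget for all $s$, or switch to the $K_{12s+6}-P_3$ route.
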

\begin{proof}
For $s=0$, such an embedding can be found by deleting a vertex from the triangular embedding of $K_7$ in the torus. For $s=1$, Mayer~\cite{Mayer-Orientables} constructed a split-complete $G_{18}$, so applying Proposition~\ref{prop-splitcomplete} yields the desired embedding. The larger-order cases are covered by combining triangular embeddings of $K_{12s+6}-P_3$ (Proposition~\ref{prop-30} for $s=2$, Theorem~\ref{thm-gross} for $s \geq 3$), with Lemma~\ref{lem-addpath}.
\end{proof}
\begin{corollary}
For $s\geq 0$, there exist embeddings of type $(6)$, $(5,4)$, and $(4,4,4)$ of $K_{12s+6}$. 
\end{corollary}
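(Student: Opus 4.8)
The plan is to observe that this is an immediate consequence of the preceding theorem together with Proposition~\ref{prop-simple6}. The theorem just established produces a nearly triangular minimum genus embedding of $K_{12s+6}$ for every $s \geq 0$; by the terminology set up earlier, this is precisely an embedding of type $(6)$. So the first of the three asserted types is already in hand, and nothing further is needed for it.

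For the remaining two types, I would invoke Proposition~\ref{prop-simple6}: since $K_{12s+6}$ has an orientable embedding of type $(6)$, that proposition directly yields embeddings of types $(5,4)$ and $(4,4,4)$. The only hypothesis to check is the one feeding into Proposition~\ref{prop-simple6} via Lemma~\ref{lem-5simp}, namely that the graph has minimum degree at least $2$; this is trivially true for $K_{12s+6}$ since $12s+6 \geq 6$. Concretely, Lemma~\ref{lem-5simp} forces the $6$-sided face to have the shape $[a,v,w,a',x,y]$ with at most the pair $a,a'$ coinciding, and then the chord exchanges $\pm(v,w)$ and $\pm(x,y)$ (both edges present in the complete graph and, by the structure of the face, not already on it) convert the $6$-face into three triangles, giving type $(4,4,4)$; performing only one of the two exchanges gives type $(5,4)$.

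I do not expect any genuine obstacle here — all the substantive work (the additional-adjacency analysis and the construction of the type $(6)$ embedding itself) was carried out in proving the theorem and in Section~4's machinery. The one point worth stating explicitly in the write-up is why the edges being moved are legitimately available as chords of the hexagonal face, which is exactly the content of Lemma~\ref{lem-5simp}; beyond that the corollary is a one-line deduction.

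\begin{proof}
By the preceding theorem, $K_{12s+6}$ has a minimum genus embedding of type $(6)$ for all $s \geq 0$. Since $12s+6 \geq 6$, the graph has minimum degree at least $2$, so Proposition~\ref{prop-simple6} applies and yields embeddings of types $(5,4)$ and $(4,4,4)$ as well.
\end{proof}
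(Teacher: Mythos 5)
Your proposal is correct and matches the paper's intent exactly: the corollary is stated without proof as an immediate consequence of the preceding theorem (a nearly triangular minimum genus embedding of $K_{12s+6}$ is necessarily of type $(6)$ since $t=3$ here) combined with Proposition~\ref{prop-simple6}, which is precisely your argument.
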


The original proof of Case 6 by Youngs \emph{et al.} had a few \emph{ad hoc} solutions and a general construction for $s \geq 4$. For $s \geq 2$, Youngs~\cite{Youngs-3569} gives a current graph construction for triangular embeddings of $K_{12s+6}-K_3$. The theory of current graphs is most suited for deleting a $K_3$ subgraph, but the Euler polyhedral equation does not rule out triangular embeddings of other graphs with the same number of edges and vertices. Gross~\cite{Gross-Case6} obtains triangular embeddings for some of these ``nearly complete'' graphs by modifying Youngs' constructions.

\begin{theorem}[Gross~\cite{Gross-Case6}]
For $s \geq 3$, there exists a triangular embedding of $K_{12s+6}-H$, where $H \in \{A, B, C, D, E\}$ is any of the five graphs on three edges in Figure~\ref{fig-bcde}. 
\label{thm-gross}
\end{theorem}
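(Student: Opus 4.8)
The plan is to bootstrap from Youngs' current graphs rather than construct new ones from scratch. For all $s\ge 3$ (indeed for $s\ge 2$), Youngs~\cite{Youngs-3569} gives a triangular embedding of $K_{12s+6}-K_3$; write $x,y,z$ for the three vertices of the deleted triangle, so that $x,y,z$ are pairwise nonadjacent while each of them is joined to all of the other $12s+3$ vertices. If $K_3$ is among the five graphs in Figure~\ref{fig-bcde}, this already settles that case. For the remaining four graphs the basic tool is the \emph{diagonal flip}: if two triangular faces share an edge $(p,q)$ and the two vertices opposite $(p,q)$ on those faces are distinct vertices $u,v$ with $(u,v)\notin E$, then deleting $(p,q)$ and inserting the diagonal $(u,v)$ yields a triangular embedding of the same surface, with the same numbers of vertices and edges, differing only in that $(p,q)$ has been replaced by $(u,v)$. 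A flip changes only rows $p,q,u,v$ of the rotation system, so Rule R* is re-checked immediately.

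Each of the five graphs has exactly three edges, so every $K_{12s+6}-H$ has $\binom{12s+6}{2}-3$ edges and triangulates the same surface; the task is to relocate which three adjacencies are absent. I would argue that a short sequence of flips carries the absent triangle $\{xy,yz,zx\}$ to each prescribed configuration. For example, a flip that deletes a numbered edge $(p,q)$ whose two incident faces are $[x,p,q]$ and $[z,p,q]$, while inserting the diagonal $(x,z)$, leaves the absent edges $\{xy,yz,pq\}$: a two-edge path together with a disjoint edge. A subsequent flip restoring $(p,q)$ and deleting instead a numbered edge $(z,r)$ at an end of that path leaves $\{xy,yz,zr\}$, a three-edge path. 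A flip inserting $(y,z)$ in place of a numbered edge (followed if necessary by another) drives the three absent edges onto a common vertex, giving a star $K_{1,3}$; and two flips that each move a numbered edge into one of the triangle's non-edges, leaving only $xy$ absent among $\{xy,yz,zx\}$, produce three independent absent edges, i.e. a matching $3K_2$. In each case the bookkeeping is trivial; what a flip genuinely demands is that a pair of faces of the prescribed shape, sharing the edge to be deleted, actually occurs in the current embedding, and that the incoming diagonal is genuinely absent.

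The real content, and the main obstacle, is precisely this last point: one must exhibit the required pairs of faces inside Youngs' embedding, and verify that flips performed in succession do not spoil one another's hypotheses. The natural route is to work on the current graph itself: near the three type-(T3) vortices that generate $x,y,z$, locate the arcs whose face traversal produces the needed triangles, perform the corresponding bounded surgery on the current graph (or directly on the affected rotation rows), and recheck the construction principles (C1)--(C6), in particular that the embedding remains one-faced (C2) and that each nonzero group element or its inverse still appears exactly once as a current (C3) --- the two conditions that a careless local change is apt to violate. The amount of slack this requires near the vortices is what forces $s\ge 3$ rather than $s\ge 2$; the residual small instances of Case~6, namely $K_{18}$ and $K_{30}$ (that is, $s\le 2$), are excluded from the statement and handled separately, via the split-complete constructions and the $s=2$ proposition invoked in the proof of Case~6 above.
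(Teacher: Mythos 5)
This statement is not proved in the paper at all: it is quoted as a theorem of Gross~\cite{Gross-Case6}, and the paper's only related argument is Proposition~\ref{prop-30}, which carries out exactly your strategy for the single case $s=2$ (starting from Youngs' triangular embedding of $K_{30}-K_3$ and applying explicit sequences of edge flips such as ${-}(0,10){+}(x,y)$ to reach $K_{30}-B$, $K_{30}-C$, $K_{30}-D$, $K_{30}-E$). So your overall plan --- bootstrap from $K_{12s+6}-K_3$ and migrate the three missing adjacencies by diagonal flips --- is precisely the method attributed to Gross and demonstrated in the paper for $s=2$.

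The problem is that your proposal stops exactly where the theorem begins. You correctly identify that ``the real content'' is exhibiting, inside Youngs' embedding for every $s\ge 3$, pairs of adjacent triangular faces of the required shapes (e.g.\ $[x,p,q]$ and $[z,p,q]$ sharing a numbered edge $(p,q)$), and verifying that successive flips do not destroy one another's hypotheses --- and then you do not do it. Whether such face pairs exist depends entirely on where the vortex letters fall in the log of Youngs' current graph (the paper's own remark after Lemma~\ref{lem-addpath} emphasizes that this works for index~3 current graphs precisely because the vortices can be made nearly adjacent in the face boundary); no general-position or counting argument is offered, and none is obvious, so the existence claim is an assertion, not a proof. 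There is also a conceptual slip in your last paragraph: the flips are performed on the derived rotation system, not on the current graph, and after a single flip the embedding is no longer generated by any current graph, so ``rechecking (C1)--(C6)'' is not the relevant verification --- what must be checked is that the four affected rows still satisfy Rule R*, which requires writing down the concrete rows from the log, as Proposition~\ref{prop-30} does. Finally, your explanation of the bound $s\ge 3$ (``slack near the vortices'') is speculative; the actual reason is that Youngs' uniform family of current graphs covers $s\ge 3$ while $s=2$ uses a separate construction, which is exactly the gap Proposition~\ref{prop-30} fills. To turn your outline into a proof you would need to display the relevant portion of the log of Youngs' general current graph and list the explicit flip sequences, as the paper does for $K_{30}$.
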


\begin{figure}[!ht]
\centering
\includegraphics{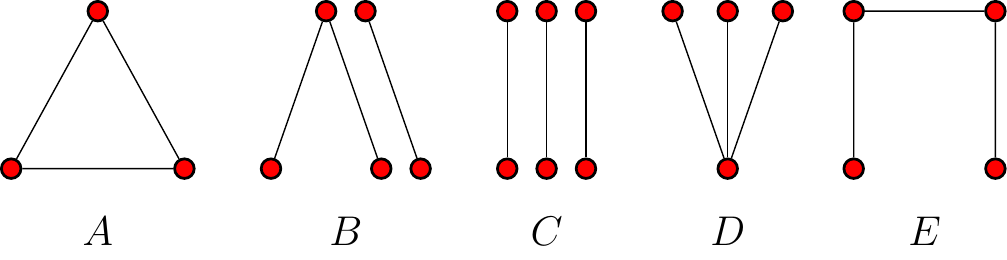}
\caption{The graphs on three edges.}
\label{fig-bcde}
\end{figure}

Before applying these embeddings for our task at hand, we extend this result one step further by filling in the case $s=2$. Youngs~\cite{Youngs-3569} also devised a current graph construction for $K_{30}-K_3$, which did not appear until Ringel's book~\cite{Ringel-MapColor}. We modify this embedding to get triangular embeddings of the other graphs.

\begin{proposition}
There exists a triangular embedding of $K_{30}-H$, where $H \in \{A, B, C, D, E\}$.
\label{prop-30} 
\end{proposition}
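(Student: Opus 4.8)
The plan is to start from the triangular embedding of $K_{30}-K_3$ underlying Youngs' current-graph construction (as recorded in Ringel~\cite{Ringel-MapColor}), write out its rotation system explicitly, and then modify it, one graph at a time, to obtain triangular embeddings of $K_{30}-H$ for the remaining four graphs --- in the same spirit in which Gross~\cite{Gross-Case6} modifies Youngs' constructions to prove Theorem~\ref{thm-gross} for $s\geq 3$. Since the triangle $K_3$ is itself one of the five graphs on three edges, Youngs' embedding already settles that case. In the embedding of $K_{30}-K_3$ the three missing edges join three mutually nonadjacent vertices $x,y,z$; each of the four remaining graphs prescribes a different arrangement of three missing edges among the $30$ vertices, and the job is to realize each arrangement without disturbing triangularity or increasing the genus.

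Concretely I would use two kinds of modification. The cheapest is an edge-relocation move of chord-exchange type: delete an edge $(u,v)$, which merges its two incident triangles $[u,v,p]$ and $[u,v,q]$ into a quadrilateral, and re-insert the diagonal $(p,q)$; this is legitimate precisely when $\{p,q\}$ is currently a missing pair, so the re-inserted edge must be one of the three edges of the previously deleted subgraph. Starting from $K_{30}-K_3$, one such move produces an embedding of $K_{30}$ minus a two-edge path and a disjoint edge (the two surviving edges of the old triangle form the path, the removed edge $(u,v)$ the disjoint edge), and a second move, with all six vertices distinct, produces $K_{30}-3K_2$; the required local configurations are read off directly from the explicit, cyclically structured rotations at $x,y,z$. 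A single such move always leaves one missing pair disjoint from the other two, so the three-edge path and the three-edge star $K_{1,3}$ are not obviously reachable this way; for these I would instead make a coordinated change to a handful of rows of the rotation system (equivalently, a small surgery on Youngs' current graph near its vortices that preserves (C3) and KCL) engineered to create the required incidence pattern --- two vertices missing two edges and two missing one for the path, one vertex missing three edges and three missing one for $K_{1,3}$. The altered rows would be displayed explicitly.

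The main obstacle is that $s=2$ is a sporadic case with no underlying general family behind it, so each of the four embeddings has to be exhibited and verified by hand. For every $H$ one must check that Rule R* still holds at every edge (equivalently, that the modified current graph still satisfies (C1)--(C6)), that no self-loop or parallel edge has been introduced and the genus is unchanged, and --- the delicate point --- that the set of missing edges is \emph{exactly} the prescribed copy of $H$ rather than some other three-edge graph. This is a finite but fiddly bookkeeping exercise over $\Z_{27}$, and the real content lies in isolating the small patch of the embedding that actually changes and arguing that everything outside it stays triangular.
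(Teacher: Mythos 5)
Your starting point and basic move are the same as the paper's: take Youngs' triangular embedding of $K_{30}-K_3$ over $\Z_{27}$ (the paper writes out its three generating rows explicitly) and apply edge flips $-(u,v)+(p,q)$, where $(p,q)$ is a currently missing pair forming the diagonal of the quadrilateral created by deleting $(u,v)$. You correctly get $B$ (one flip) and $C$ (two flips) this way. The gap is your claim that the path $E=P_3$ and the star $D=K_{1,3}$ are ``not obviously reachable'' by flips, which leads you to fall back on an unspecified surgery of several rows of the rotation system (or of the current graph itself) for those two cases. That fallback is both unnecessary and, as written, not a proof --- it is precisely the hard part left unexecuted, with no explicit rows and no verification that the result is triangular with exactly the prescribed missing edges.

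The premise is false because you are treating the flip too rigidly. Two observations rescue the flip-only approach. First, for the star: nothing forces the three deleted edges to be scattered. Choosing three flips each of which deletes an edge incident with one fixed numbered vertex while re-inserting one of the three lettered missing pairs yields a missing $K_{1,3}$ directly; the paper does exactly this with $-(0,10)+(x,y)$, $-(8,10)+(x,z)$, $-(10,x)+(y,z)$, leaving the star $\{(0,10),(8,10),(10,x)\}$ centred at $10$. Second, for the path: the re-inserted edge only needs to be missing \emph{at the time of that flip}, so flips can be chained to migrate a stray missing edge through the embedding until it lands on an edge incident with one of $x,y,z$. The paper's sequence $-(1,26)+(x,z)$, $-(11,16)+(1,26)$, $-(6,x)+(11,16)$ moves the missing edge from $(1,26)$ to $(11,16)$ to $(6,x)$, leaving the path $6$--$x$--$y$--$z$. (This chaining is exactly the ``sequence of edge flips'' $-(a,b)\pm(c,d)\pm(e,f)+(g,h)$ the paper formalizes in Section~\ref{sec-case8}.) With these two devices your argument closes using only the move you already have; what remains is the finite verification against the explicit rotation system, which you correctly identify but do not supply.
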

\begin{proof}

The current graph given by Ringel~\cite[p.155]{Ringel-MapColor} uses the group $\Z_{27}$ and produces the following three rows:

$$\begin{array}{rrrrrrrrrrrrrrrrrrrrrrrrrrrrrrrrrrrrrrrrrrrrr}
0. & 26 & 15 & 16 & 24 & 8 & 6 & 25 & 4 & 7 & 22 & 9 & 18 & 13 & z & \dots & \\
   & 14 & 1 & 12 & 11 & 3 & 19 & 21 & 2 & 23 & 20 & 5 & x & 10 & y & 17 & \\
   \\
1. & 0 & 14 & 10 & 19 & 5 & y & 18 & x & 26 & z & 15 & 6 & 13 & 23 & \dots & \\
   & 11 & 16 & 21 & 25 & 17 & 20 & 4 & 24 & 22 & 2 & 8 & 7 & 3 & 9 & 12 & \\
   \\
2. & 3 & 20 & 11 & 12 & y & 25 & 16 & 15 & z & 4 & x & 24 & 17 & 7 & \dots & \\
   & 19 & 14 & 9 & 5 & 13 & 10 & 26 & 6 & 8 & 1 & 22 & 23 & 0 & 21 & 18 & \\
\end{array}$$

We use a modified version of the additive rule to determine the remaining numbered rows---for row $k$, we take row $(k \bmod{3})$ and add $k-(k \bmod{3})$ to all the numbered entries. After manufacturing rows $x$, $y$, and $z$, we have a triangular embedding of $K_{30}-K_3$. 

When row $a$ is of the form $\dots c \,\,\, b \,\,\, d \dots$ and $(c,d)$ is not an edge in the graph, Gross~\cite{Gross-Case6} uses the notation $-(a,b)+(c,d)$ to denote an \emph{edge flip}, where we delete the edge $(a,b)$ and add the edge $(c,d)$ in the resulting quadrilateral. One can check that after applying the following groups of edge flips, we realize triangular embeddings of the four other graphs:

\begin{itemize}
\item $K_{30}-B$: ${-}(0,10){+}(x,y)$
\item $K_{30}-C$: ${-}(0,10){+}(x,y)$, ${-}(1,26){+}(x,z)$
\item $K_{30}-D$: ${-}(0,10){+}(x,y)$, ${-}(8,10){+}(x,z)$, ${-}(10,x){+}(y,z)$
\item $K_{30}-E$: ${-}(1,26){+}(x,z)$, ${-}(11,16){+}(1,26)$, ${-}(6,x){+}(11,16)$
\end{itemize}

\end{proof}

The graph we focus on particular is $K_n-E$, where $E = P_3$ is the path graph on three edges. Carefully adding these edges back yields a nearly triangular embedding.

\begin{lemma}
If there exists a triangular embedding of $K_n-P_3$, there exists an embedding of type $(6)$ of $K_n$.
\label{lem-addpath}
\end{lemma}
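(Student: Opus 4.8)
The plan is to start from a triangular embedding of $K_n - P_3$ and add the three missing edges back in using a single handle, arranging things so that exactly one nontriangular face (a hexagon) results. Let the missing path be $P_3 = x_1 x_2 x_3 x_4$ on four vertices, so the three missing edges are $(x_1,x_2)$, $(x_2,x_3)$, $(x_3,x_4)$. In the triangular embedding, every face is a triangle; I would locate two triangular faces, one incident with $x_2$ and one incident with $x_3$, whose vertex sets are chosen compatibly with the rotation at $x_2$ and $x_3$. The idea mirrors the $(6,5)$ construction from Proposition~\ref{prop-knk2}: by looking at the rotations at $x_2$ and $x_3$, pick a triangle $[x_2, x_1, u]$ (using the fact that $x_1$ is a neighbor of $x_2$ in $K_n-P_3$ — wait, it is \emph{not}, so instead pick a triangle $[x_2, a, b]$ where $a$ is a neighbor we want adjacent to $x_1$) and a triangle $[x_3, c, d]$, then attach them with a handle.

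Concretely, I would add a handle joining a triangle incident with $x_2$ and a triangle incident with $x_3$, merging them into an octagon, and route the edge $(x_2,x_3)$ through the handle so that it appears \emph{twice} on the resulting face boundary — exactly as in the split-complete construction (Proposition~\ref{prop-splitcomplete}). Contracting that doubled edge $(x_2,x_3)$ locally turns the octagon into a hexagon, at which point $(x_2,x_3)$ has been successfully installed and we have an embedding of $K_n - \{(x_1,x_2),(x_3,x_4)\}$ with a single hexagonal face. The final step is to install $(x_1,x_2)$ and $(x_3,x_4)$ as chords of this hexagon via two chord exchanges: the hexagon should have the form $[x_2, ?, x_3, ?, ?, ?]$ with $x_1$ and $x_4$ positioned so that drawing the chords $(x_1,x_2)$ and $(x_3,x_4)$ across the hexagon cuts it into three triangles (giving type $(4,4,4)$ — too many) or, more carefully, cuts it so that exactly one triangle and one pentagon — no: we want to end with type $(6)$, so actually the two missing edges must \emph{not} be added as chords. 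Let me reconsider: we want the final embedding to have a single hexagonal face, so after contracting we should already be at $K_n$ with a hexagon, meaning all three edges of $P_3$ get installed by the handle-plus-contraction, not just $(x_2,x_3)$.

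So the correct structure: choose the handle and the routing of all of $(x_1,x_2),(x_2,x_3),(x_3,x_4)$ through it. The cleanest route: attach a handle between a triangle $[x_1, x_2', *]$ and... this is where care is needed. I expect the main obstacle to be the bookkeeping of \emph{which} triangular faces to pick and \emph{how} to thread the three new edges through one handle so that precisely one face ends up nontriangular with length exactly $6$ (consistent with Case 6's requirement $t=3$). One must verify, using Lemma~\ref{lem-5simp} or a direct local-picture argument, that the chosen faces are genuinely distinct triangles, that the vertices involved are pairwise distinct where required (otherwise a new edge would coincide with an existing one or appear twice, decreasing the genus as warned in the Remark after Proposition~\ref{prop-knk2}), and that the final face is simple and of length $6$. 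Since $n \equiv 6 \pmod{12}$ forces $n \geq 18$ in the cases where this lemma is invoked, the vertices $x_1, x_2, x_3, x_4$ have abundantly many neighbors, so a counting/pigeonhole argument on the rotations at $x_2$ and $x_3$ (as in the $(6,5)$ case) will supply triangular faces with all the needed distinctness; the genuinely delicate part is drawing the handle diagram correctly so that the Euler count gives one hexagon and Rule~R$^*$-type local checks confirm triangularity everywhere else.
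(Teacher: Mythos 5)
Your high-level target is right --- one handle, all three missing edges added through it, exactly one hexagonal face left over --- but the proposal never actually produces the construction, and the one concrete route you do sketch does not work. The detour through the split-complete trick (route $(x_2,x_3)$ twice around the handle and contract it) installs only the middle edge of the path; you then correctly observe that the remaining two edges cannot be added as chords without destroying the hexagon, retract the idea, and end with ``this is where care is needed'' plus an appeal to a pigeonhole argument on the rotations at $x_2$ and $x_3$. That deferred step is precisely the content of the lemma. The missing idea is the choice of the two faces: writing the missing edges as $(a,b)$, $(b,c)$, $(c,d)$, the \emph{diagonals} $(a,c)$ and $(b,d)$ are present in $K_n-P_3$, so one may take triangular faces $[a,c,x]$ and $[d,b,x']$ containing them (these exist for every edge of a triangular embedding; $x$ and $x'$ need not be distinct). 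Joining these two faces by a handle puts $a,c$ on one boundary circle of the resulting cylinder and $b,d$ on the other, so the three spanning edges $(a,b)$, $(b,c)$, $(c,d)$ cut the cylinder into the two triangles $[a,c,b]$ and $[c,b,d]$ --- closed off by the existing edges $(a,c)$ and $(b,d)$ --- plus the single hexagon $[a,b,x',d,c,x]$. Without selecting faces that contain those two diagonal edges, no threading of the three new edges through a handle between two generic triangles at $x_2$ and $x_3$ yields the $3+3+6$ face count; a cylinder with boundary length $6$ cut by three spanning arcs gives faces of total length $12$, and you only get two triangles out of it when two of the four boundary gaps have length zero (consecutive new edges sharing an endpoint) and the other two are single existing edges.

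Two smaller points. First, no largeness or pigeonhole argument is needed at all: the faces $[a,c,x]$ and $[d,b,x']$ exist for any triangular embedding of $K_n-P_3$, and the argument tolerates $x=x'$, which is why the lemma also applies to $K_{10}$ (Case 10, $s=0$), not only to $n\equiv 6\pmod{12}$ with $n\ge 18$ as you assume. Second, the analogy to Proposition~\ref{prop-splitcomplete} is misleading here because contraction is never used in this lemma; the relevant picture is closer to Figure~\ref{fig-8handle} with three edges across the handle instead of one.
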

\begin{proof}
Suppose the missing edges are $(a,b)$, $(b,c)$, and $(c,d)$. The edges $(a,c)$ and $(b,d)$ are in the graph, so there are triangular faces $[a, c, x]$ and $[d, b, x']$ for some (possibly nondistinct) vertices $x$ and $x'$. With one handle, we can add back the missing edges following Figure~\ref{fig-knp3}, leaving the 6-sided face $[a, b, x', d, c, x]$.
\end{proof}

\begin{figure}[!ht]
\centering
\includegraphics{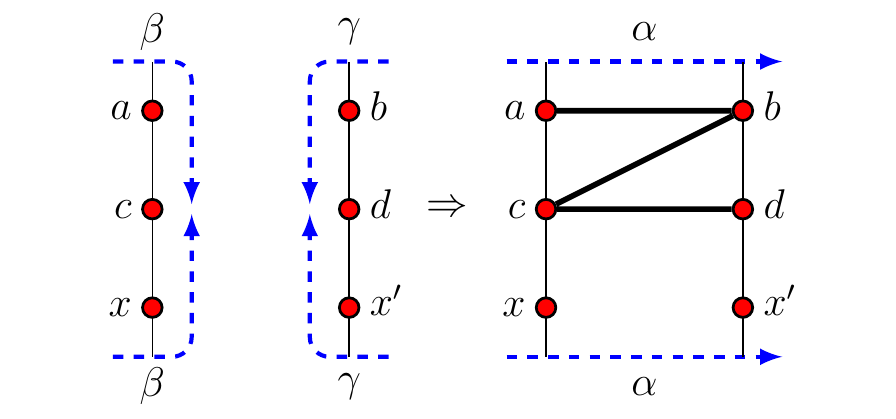}
\caption{Adding a $P_3$ subgraph using a specific pair of faces to get a 6-sided face.}
\label{fig-knp3}
\end{figure}

\begin{remark}
The approach of flipping edges in a triangulation to get the graph $K_n-P_3$ seems better suited for index 3 current graphs (see Youngs~\cite{Youngs-3569}), where the vortices can be nearly adjacent to each other in the log of the face boundary. The known current graph constructions for Cases 1 and 10 enjoy no such benefit.
\end{remark}

\section{Connecting three vertices with a handle}

So far, we have seen some examples of how to use a handle to add a few extra edges. In those cases, either two vertices or two pairs of adjacent vertices are joined together without disrupting any part of the rest of the embedding. Now, we show how to join three nonadjacent vertices using one handle. This construction was used in all additional adjacency steps in the original proof of the Map Color Theorem except Case 5. While Ringel and Youngs~\cite{RingelYoungs} illustrated this construction with drawings of the dual formulation of the problem, we elect to work in the primal to emphasize the  nontriangular faces and their incident vertices. 

\begin{construction}[On input vertices $v$; $x$, $y$, $z$]
Suppose the rotation at $v$ has the form
$$\begin{array}{rrrrrrrrrrrrrrrrrrrrrrrrrrrrrrrrrrrrrrrrrrrrr}
v. & x & a_1 \dots a_i & y & b_1 \dots b_j & z & c_1 \dots c_k.
\end{array}$$
Do the following:
\begin{itemize}
\item delete the edges $(v,x)$, $(v,y)$, and $(v,z)$ and
\item rewrite the rotation at $v$ as
$$\begin{array}{rrrrrrrrrrrrrrrrrrrrrrrrrrrrrrrrrrrrrrrrrrrrr}
v. & a_1 \dots a_i & c_1 \dots c_k & b_1 \dots b_j.
\end{array}$$
\end{itemize}
\label{construction-k3}
\end{construction}

\begin{proposition}
Applying Construction~\ref{construction-k3} on a triangular embedding of genus $g$ yields an embedding of genus $g+1$ with the 12-sided face
 $$[a_1 , x , c_k , v , b_1 , y , a_i , v , c_1 , z , b_j , v].$$
\end{proposition}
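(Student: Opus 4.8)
The plan is to verify this statement purely combinatorially, tracing face boundaries via the permutation $\Phi \circ \theta$ described in Section 3. Since Construction~\ref{construction-k3} only modifies the rotation at a single vertex $v$ (and deletes three edge ends there), the only faces that can change are those that had a corner at $v$. In the original triangular embedding, the corners at $v$ corresponding to the consecutive pairs $(x, a_1)$, $(a_i, y)$, $(y, b_1)$, $(b_j, z)$, $(z, c_1)$, $(c_k, x)$ each belong to some triangular face; by Rule~R* (or directly from triangularity), the faces containing these corners are forced to be, respectively, $[v, x, \ldots]$-type triangles, and one computes that the triangle using corner $(c_k, x)$ at $v$ is $[a_1, x, v]$, wait—more carefully, the triangle on the corner between edge-ends to $x$ and $a_1$ is $[x, a_1, v]$ reading around, and similarly for the others. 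First I would write down explicitly the (up to) six triangular faces incident to $v$ at these six distinguished corners: using triangularity, the face at corner $(x,a_1)$ is $[v, x, ?]$ where $?$ is forced, and so on around.

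Next I would perform the surgery and re-trace. After deleting $(v,x), (v,y), (v,z)$ and rewriting row $v$ as $a_1 \ldots a_i\, c_1 \ldots c_k\, b_1 \ldots b_j$, the new corners at $v$ are $(a_i, c_1)$, $(c_k, b_1)$, and $(b_j, a_1)$ (the internal corners like $(a_1, a_2)$ are unchanged and still sit on the same triangles). I would then follow $\Phi \circ \theta$ starting from the edge-end $a_1^+$ at $v$: it proceeds to $a_1$, around the (unchanged) triangle there to $x$, but now the edge back is gone, so the boundary walk continues through what used to be two separate triangles, picking up $x$, then $c_k$ (since $(v,x)$'s deletion fuses the $[x, \cdot, v]$ triangle into the next region), then $v$ again via $c_k^-$, and so on. Carrying this out consistently should splice the six triangles that were attached at the six deleted corners—three of them twice-incident to $v$—into one face, and the bookkeeping yields exactly the cyclic sequence $[a_1, x, c_k, v, b_1, y, a_i, v, c_1, z, b_j, v]$, a 12-gon. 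I would double check that no other face is affected and that all six triangles are distinct (using that the embedding is simple and triangular, so e.g. $[v,x,a_1]$, $[v,a_i,y]$, etc., are genuinely six different faces when $i, j, k \geq 1$; if some $a_i = a_1$ etc. the argument still goes through since we only track the face walk, not vertex-distinctness).

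For the genus claim, I would just apply the Euler polyhedral equation: we removed $3$ edges and replaced $6$ triangular faces by $1$ dodecagon, so $|F|$ drops by $5$ while $|E|$ drops by $3$ and $|V|$ is unchanged, giving $\Delta(|V| - |E| + |F|) = 0 - (-3) + (-5) = -2$, hence the genus increases by exactly $1$. (One should note the resulting embedding is still cellular—the single new face is an open disk since the boundary walk is a single closed walk—so the Euler formula applies.)

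The main obstacle I anticipate is the careful, error-prone bookkeeping of the face-boundary trace after the surgery: getting the cyclic order right around each of the six triangles and confirming the deletions fuse them in the claimed pattern (alternating $v$ and a lettered vertex in the $\ldots v, b_1, y, a_i, v \ldots$ segments). The cleanest way to manage this is probably to track the permutation $\Phi \circ \theta$ on the roughly dozen relevant edge-ends before and after, rather than reasoning pictorially; alternatively, a single well-drawn figure of the three triangles $[v,x,\cdot]$, $[v,y,\cdot]$, $[v,z,\cdot]$ fans makes the fusion visually transparent. Everything else is routine.
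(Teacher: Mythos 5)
Your proposal is correct and follows essentially the same route as the paper: trace the modified face boundary via the Heffter--Edmonds principle (the paper does this pictorially in a figure, you do it via the permutation $\Phi\circ\theta$, which is the same computation) and then apply the Euler polyhedral equation with the count of six triangles merging into one 12-gon, giving $\Delta|F|=-5$, $\Delta|E|=-3$, hence genus $+1$. Your added remark that the six affected triangles are distinct (since a triangular face of a simple graph meets $v$ at most once) is a detail the paper leaves implicit but does not change the argument.
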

\begin{proof}
The 12-sided face is traced out in Figure~\ref{fig-heffter} using the Heffter-Edmonds principle. In fact, most of the faces remain intact, except those incident with $v$ and one of $x$, $y$, or $z$. The number of edges and faces decreased by 3 and 5, respectively, so by the Euler polyhedral equation, the genus increased by 1.
\end{proof}

\begin{figure}[!ht]
\centering
\includegraphics{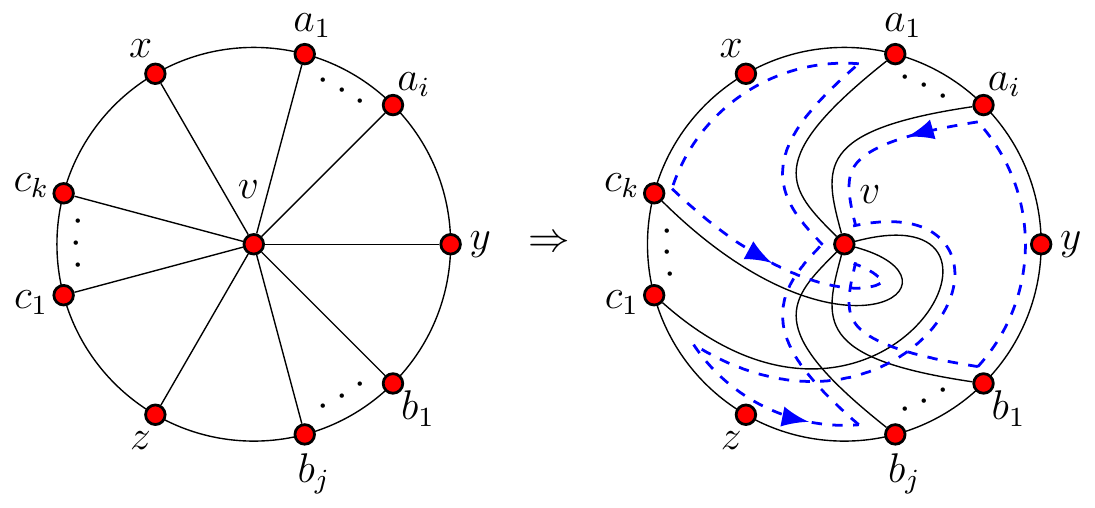}
\caption{Altering the rotation at a vertex and incrementing the genus. The dashed line indicates the boundary of the 12-sided face.}
\label{fig-heffter}
\end{figure}

Using this large face, we try to add back the edges we deleted and some others. The usual interpretation of Ringel's additional adjacency operation in the primal formulation is that three faces containing the vertices $x$, $y$, and $z$ are merged together, but note that for each of those vertices, we have two faces incident with that vertex and $v$. By first deleting the three edges $(v,x)$, $(v,y)$ and $(v,z)$, we can consider all possible combinations of faces simultaneously.

The most immediate application of this construction simply connects $x$, $y$, and $z$:

\begin{proposition}
If there exists a triangular embedding of $K_n-K_3$ then there exists an embedding of $K_n$ in the surface of genus $I(n)$. 
\end{proposition}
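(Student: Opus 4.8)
The plan is to realize the three missing edges of $K_n-K_3$ by a single application of Construction~\ref{construction-k3}. Write the missing triangle on vertices $x,y,z$, so that in $K_n-K_3$ these three are pairwise nonadjacent while every other vertex is adjacent to all of them. Fix a vertex $v\notin\{x,y,z\}$ (such $v$ exists, since a triangular embedding of $K_n-K_3$ forces $n\ge 6$). In a triangular embedding no two consecutive neighbours in a rotation can be a nonadjacent pair, as they would bound a triangular face; hence none of $x,y,z$ are consecutive in row $v$, and row $v$ has precisely the form demanded by Construction~\ref{construction-k3}, with all three blocks $a_1\dots a_i$, $b_1\dots b_j$, $c_1\dots c_k$ nonempty.

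Applying Construction~\ref{construction-k3} deletes $(v,x),(v,y),(v,z)$, raises the genus by one, and, by the preceding proposition, leaves every face untouched except for one new $12$-sided face $F=[a_1,x,c_k,v,b_1,y,a_i,v,c_1,z,b_j,v]$. Now I install the six edges still missing from $K_n$ --- namely the just-deleted $(v,x),(v,y),(v,z)$ together with $(x,y),(y,z),(x,z)$ --- as chords of $F$. Indexing the corners of $F$ by $1,\dots,12$ in the order written, $x,y,z$ sit at positions $2,6,10$ and the three occurrences of $v$ at positions $4,8,12$. I take the chords $\{2,12\},\{4,6\},\{8,10\}$ for $(v,x),(v,y),(v,z)$ and $\{2,6\},\{6,10\},\{2,10\}$ for $(x,y),(y,z),(x,z)$; a direct check shows these six chords are pairwise noncrossing (the first three cut off a triangle at each of $x,y,z$, and the last three cut out the triangle $xyz$ in the middle). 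Inserting noncrossing chords into a face changes neither the surface nor its genus, so the result is an embedding of $K_n$ whose genus is $g+1$, where $g$ is the genus of the given triangular embedding of $K_n-K_3$.

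It remains to confirm $g+1=I(n)$. By Proposition~\ref{prop-trianglebound}, $|E(K_n-K_3)|=3n-6+6g$, i.e.\ $g=\tfrac{(n-1)(n-6)}{12}$; in particular $12\mid(n-1)(n-6)$. Since $(n-3)(n-4)=(n-1)(n-6)+6$, this divisibility gives $(n-3)(n-4)\equiv 6\pmod{12}$, whence $I(n)=\big\lceil\tfrac{(n-3)(n-4)}{12}\big\rceil=\tfrac{(n-1)(n-6)+12}{12}=g+1$, as required.

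The only nonroutine point is the simultaneous placement of all six edges as noncrossing chords of $F$ --- equivalently, that the two faces meeting $v$ and $x$ (and likewise for $y$ and for $z$) can be amalgamated through the handle in a mutually compatible way. I would make this explicit with a figure in the style of Figure~\ref{fig-heffter}; the genus bookkeeping and the shape of row $v$ are immediate.
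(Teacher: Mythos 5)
Your proposal is correct and follows essentially the same route as the paper: apply Construction~\ref{construction-k3} at a vertex adjacent to all of $x,y,z$ and then insert the six missing edges as pairwise noncrossing chords of the resulting $12$-sided face (your chords $\{2,6\},\{6,10\},\{2,10\}$ are exactly the triangle $(x,y),(y,z),(x,z)$ that the paper adds first, and your remaining three chords are one of the ``several options'' it mentions for restoring $(v,x),(v,y),(v,z)$). Your explicit checks that the three blocks in row $v$ are nonempty and that the genus lands on $I(n)$ are fine and slightly more detailed than the paper's.
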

\begin{proof}
After adding the chords $(x,y)$, $(y,z)$, and $(x,z)$, we are left with the 5-sided faces $[0, b_1, y, x, c_k]$, $[0, c_1, z, y, a_i]$, and $[0, a_1, x, z, b_j]$, as in Figure~\ref{fig-555}. There are several options for adding back the edges $(0, x)$, $(0, y)$, and $(0, z)$ as chords. 
\end{proof}

\begin{figure}[!ht]
\centering
\includegraphics{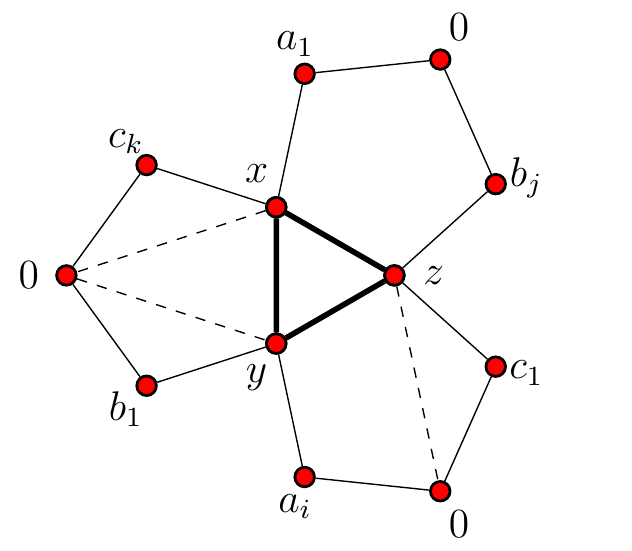}
\caption{Adding in a $K_3$ with a handle after deleting some edges. One possible way of restoring the deleted edges is shown with dashed lines.}
\label{fig-555}
\end{figure}

\begin{remark}
A triangular embedding of $K_n-K_3$ is known for $n \equiv 1, 6, 9, 10 \pmod{12}$, $n \geq 10$, so the correct choices of chords gives us embeddings of type $(5,4)$ and $(4,4,4)$. However, in light of Proposition~\ref{prop-simple6}, we do not need this result.
\end{remark}

\section{Case 10}

\begin{theorem}
For $s\geq 0$, there exists a nearly triangular minimum genus embedding of $K_{12s+10}$. 
\end{theorem}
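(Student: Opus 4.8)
The goal is an embedding of type $(6)$: for $n \equiv 10 \pmod{12}$ we have $t = 3$, so a single hexagonal face is precisely the nearly triangular case, and by Proposition~\ref{prop-simple6} producing it automatically yields the remaining predicted types $(5,4)$ and $(4,4,4)$ as well. The plan is to start from a triangular embedding of $K_{12s+10}-K_3$, whose existence for every $s \geq 0$ is supplied by the Map Color Theorem current-graph constructions (cf.\ the remark following Construction~\ref{construction-k3}), and to turn it into $K_{12s+10}$ by spending one handle. Write $x$, $y$, $z$ for the three mutually nonadjacent vertices; since a triangular embedding of $K_{12s+10}-K_3$ lies in the surface of genus $I(12s+10)-1$, one handle is exactly the deficit.

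Concretely, I would apply Construction~\ref{construction-k3} at a carefully chosen numbered vertex $v$ (using that $v$ has degree $12s+9$, so its rotation has ample room to split into three nonempty blocks), obtaining a $12$-sided face at genus $I(12s+10)$ with the six edges $(v,x),(v,y),(v,z),(x,y),(y,z),(z,x)$ now missing, and then add these back as chords of the $12$-gon. The difficulty is that the symmetric way of doing this --- as in the proposition right after Construction~\ref{construction-k3} --- forces $[x,y,z]$ to appear as a face and cuts the remainder into three pentagonal regions, which can only be triangulated down to type $(5,4)$ or $(4,4,4)$. To obtain a single hexagon instead, the idea is to break this symmetry: choose $v$ so that one of $x,y,z$ occupies the position on the $12$-gon boundary lying between two numbered vertices rather than the position symmetric to the other two. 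With this choice the three missing non-$v$ edges no longer partition the $12$-gon into three pentagons --- one region becomes a hexagon that can absorb two of the $v$-edges, while the remaining quadrilateral region is bisected into two triangles by the last $v$-edge --- leaving one hexagonal face. If instead a stubborn pentagon--quadrilateral pair $(5,4)$ survives, a single chord exchange (which preserves the face count) can be used to merge the pentagon with an incident triangle into a hexagon while splitting the quadrilateral into two triangles, converting the result to type $(6)$; ensuring that a suitable edge for this exchange exists is part of the bookkeeping. The smallest cases (certainly $K_{10}$, and possibly a few more where the ``ample room'' estimates are tight) I would verify directly.

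The crux, and where essentially all of the work of this Case lies, is making the coalescence argument go through: showing that $v$ and the chord routing can always be selected so that the nontriangular faces merge into exactly one hexagon rather than persisting as two smaller faces. This is more delicate than the additional-adjacency steps of the earlier Cases because, unlike Case~6, we cannot pass to a triangular embedding of $K_{12s+10}-P_3$ and invoke Lemma~\ref{lem-addpath} --- the relevant current graphs, as the closing remark of the previous Case points out, do not admit the required edge flips. One therefore has to exploit fine structure of the triangular embedding of $K_{12s+10}-K_3$ near $v$ (which vertices on the $12$-gon are forced to be distinct, where $z$ is constrained to sit, and so on), and possibly precede the handle attachment with an edge flip to put the local rotation into the needed shape. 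Once the hexagon is in hand, Lemma~\ref{lem-5simp} guarantees it is essentially simple and Proposition~\ref{prop-simple6} delivers the remaining face distributions, completing the Case.
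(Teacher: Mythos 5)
Your setup is right and matches the paper's: start from a triangular embedding of $K_{12s+10}-K_3$, apply Construction~\ref{construction-k3} at a numbered vertex, and note that adding the six missing edges as chords of the resulting $12$-gon in the ``symmetric'' way can only produce types $(5,4)$ or $(4,4,4)$, never $(6)$. (Your treatment of $s=0$ by direct verification also agrees with the paper, which uses the explicit $K_{10}-P_3$ embedding of Table~\ref{tab-k10} together with Lemma~\ref{lem-addpath}.) But the mechanism you propose for escaping the obstruction does not work as stated, and the part you defer as ``bookkeeping'' is in fact the entire content of the proof. The $12$-sided face produced by Construction~\ref{construction-k3} is always, for every choice of $v$, of the form $[a_1, x, c_k, v, b_1, y, a_i, v, c_1, z, b_j, v]$: each of $x,y,z$ sits between two numbered vertices and the three copies of $v$ interleave them. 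There is no choice of $v$ that puts one of $x,y,z$ in an asymmetric position, so ``breaking the symmetry by choosing $v$'' is not available; the chords $(x,y),(y,z),(x,z)$ always cut off the triangle $[x,y,z]$ and three pentagons, and the three $v$-chords then force $(4,4,4)$ or $(5,4)$.

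Your fallback --- a chord exchange that merges the surviving pentagon with an adjacent triangle while bisecting the quadrilateral --- is the right kind of move, but it requires an edge lying on the pentagon whose endpoints are simultaneously opposite corners of the quadrilateral (or, more generally, a chain of such coincidences), and nothing in your argument produces one. This is exactly where the paper has to work: in Theorem~\ref{thm-gen10} it first \emph{modifies} Ringel's current graph (flipping the rotation at the vertex adjacent to vortex $z$), then exploits the algebraic coincidence that, with $c = -(5s{+}3) = 7s{+}4$ in $\Z_{12s+7}$, the current into $x$ equals $2c$ and another relevant current equals $-3c$. This gives row $x$ the form $\dots\, {-}c \,\, c \,\, 3c \,\, 5c \,\dots$, so that exchanging the chords $(x,c)$ and $(x,3c)$ inside the $12$-gon creates a $5$-sided face $[x,{-}c,c,3c,5c]$ elsewhere, and a further explicit sequence of four chord exchanges $\pm(2c,3c)\pm(2c{+}1,z)\pm(c{+}1,3c{+}1)\pm({-}c,x)$, each justified by a specific partial row of the rotation system, walks the remaining quadrilateral until it collides with that $5$-sided face and merges into a single hexagon. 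Without identifying such structure in a concrete family of current graphs, the coalescence step is an unproven assertion, so the proposal has a genuine gap at its central claim.
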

\begin{proof}
For $s=0$, we apply Lemma~\ref{lem-addpath} to the triangular embedding of $K_{10}-P_3$ given in Table~\ref{tab-k10} in the Appendix. A unified solution is given for $s \geq 1$ in Theorem~\ref{thm-gen10}. 
\end{proof}
\begin{corollary}
For $s\geq 0$, there exist embeddings of type $(6)$, $(5,4)$, and $(4,4,4)$ of $K_{12s+10}$. 
\end{corollary}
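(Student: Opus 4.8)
\emph{Proof proposal.} The plan is to read off all three face distributions from results already established. First I would observe that a nearly triangular minimum genus embedding of $K_{12s+10}$, which the preceding theorem supplies, is exactly an embedding of type $(6)$: for $n \equiv 10 \pmod{12}$ the congruence $2t \equiv -(n-3)(n-4) \pmod{12}$ forces $t(n) = 3$, so a minimum genus embedding with a single nontriangular face must have that face of length $3 + 3 = 6$. This settles the type $(6)$ claim immediately, exactly as in the analogous corollaries for Cases $6$ and $9$.

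For the remaining two types I would invoke Proposition~\ref{prop-simple6}, which turns a type $(6)$ embedding of any $K_n$ into embeddings of types $(5,4)$ and $(4,4,4)$ by a chord exchange on one or both of a suitable pair of edges in the hexagonal face. The only hypothesis to verify is that $K_{12s+10}$ has minimum degree at least $2$, which is clear since $12s+10 \geq 10$; this is what lets Lemma~\ref{lem-5simp} apply, so that the hexagonal face has the controlled form $[a, v, w, a', x, y]$ with only $a, a'$ possibly coinciding, and the two edges $(v,w)$ and $(x,y)$ really are present and exchangeable. Finally I would note that $(6)$, $(5,4)$, $(4,4,4)$ are precisely the types coming from the three partitions $3$, $2+1$, $1+1+1$ of $t = 3$, so these exhaust the face distributions permitted by the Euler polyhedral equation, and minimality of each follows from the proposition on embeddings of type $(a_1,\dotsc,a_i)$ with $(a_1-3) + \dots + (a_i-3) \leq 5$. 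In other words, the corollary is just the $n \equiv 10 \pmod{12}$ instance of the main theorem.

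There is essentially no obstacle here: the corollary is a repackaging of the preceding theorem together with Proposition~\ref{prop-simple6}, whose own proof already handled the one delicate point, namely that the chord exchange produces genuinely new edges and does not decrease the genus. So I expect the entire proof to be a one- or two-sentence citation of those two results.
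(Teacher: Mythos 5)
Your proposal is correct and is exactly the paper's (implicit) argument: the preceding theorem gives a nearly triangular minimum genus embedding, which for $n \equiv 10 \pmod{12}$ must be of type $(6)$ since $t=3$, and Proposition~\ref{prop-simple6} then yields types $(5,4)$ and $(4,4,4)$ by chord exchanges in the hexagonal face. Nothing further is needed.
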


For $s \geq 0$, Ringel~\cite[\S2.3]{Ringel-MapColor} gives a valid current graph generating $K_{12s+10}-K_3$, the $s=2$ case being illustrated in Figure~\ref{fig-cur-case10-graceful}. Luckily for us, the current assignments, which follow the same alternating pattern in the rungs of the ladder in Figure~\ref{fig-cur-case10-graceful}, can be used to produce a nearly triangular embedding.

\begin{figure}[!ht]
\centering
\includegraphics{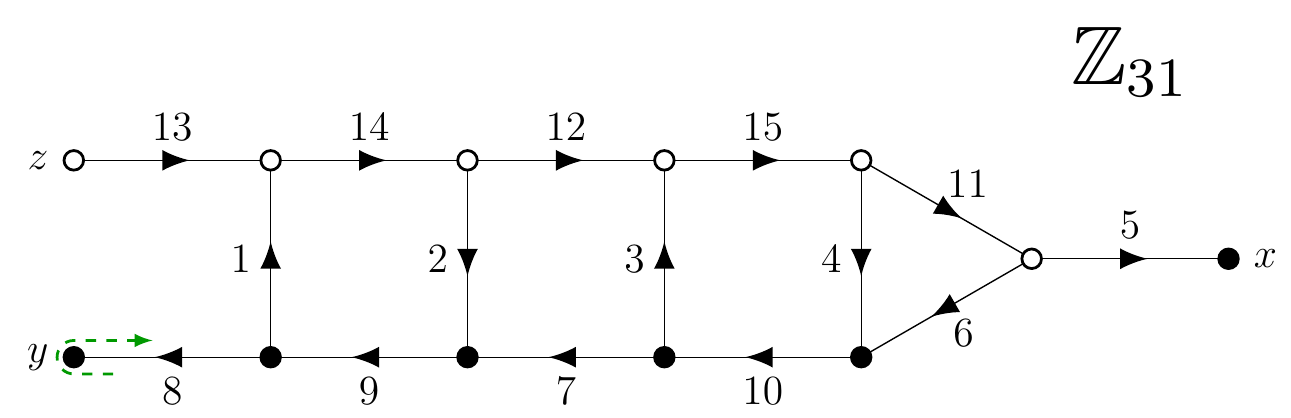}
\caption{The current graph for $s=2$, which produces a triangular embedding of $K_{34}-K_3$. }
\label{fig-cur-case10-graceful}
\end{figure}

\begin{theorem}
For $s \geq 1$, there exists a nearly triangular minimum genus embedding of $K_{12s+10}$.
\label{thm-gen10}
\end{theorem}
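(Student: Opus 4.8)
The natural plan is to reduce the problem to Lemma~\ref{lem-addpath} by converting the triangular embedding of $K_{12s+10}-K_3$ supplied by Ringel's current graph (Figure~\ref{fig-cur-case10-graceful} and its evident generalization to all $s\ge 1$) into a triangular embedding of $K_{12s+10}-P_3$, and then adding the three edges of that path back with one handle. Write $\Gamma=\Z_{12s+7}$ and let $x,y,z$ be the three degree-one vortices; these are exactly the three mutually nonadjacent vertices, so $K_{12s+10}-K_3$ is the graph missing the triangle $\{(x,y),(y,z),(z,x)\}$. Reading the log of the single face off the current graph and applying the additive rule gives the complete rotation system, and—crucially—because the currents on the rungs of the ladder follow a fixed alternating pattern independent of $s$, the log, and hence every row of the rotation system, has an explicit closed form; in particular the cyclic positions of $x$, $y$, $z$ within any given row are known exactly.

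The obstacle flagged in the remark after Lemma~\ref{lem-addpath} is that here the vortices sit far apart in the log, so no single edge flip ${-}(a,b){+}(c,d)$ can turn the missing triangle into a missing $P_3$: such a flip would have to add back a triangle edge, say $(z,x)$, while deleting an edge $(a,b)$ disjoint from $\{z,x\}$, and then $\{(x,y),(y,z),(a,b)\}$ can never be a path. The remedy is to use a short sequence of flips. A first flip ${-}(0,w){+}(x,y)$—legal because the alternating pattern places a numbered neighbour $w$ of $0$ between $x$ and $y$ in row $0$—leaves the graph missing $\{(y,z),(z,x),(0,w)\}$, i.e.\ a path $y$–$z$–$x$ together with one disjoint edge. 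A further flip (or two) of the form ${-}(b,y){+}(0,w)$, performed in a row in which $0$ and $w$ flank a single vertex $b\neq x,z$, restores $(0,w)$ and attaches a new pendant edge to the path, yielding a missing set $\{(b,y),(y,z),(z,x)\}$, which is a copy of $P_3$. Pinning down $w$ and $b$ so that every intermediate local configuration really occurs—bearing in mind that each flip rewrites the four rows it touches and that an earlier flip can disturb the rotation needed by a later one—while checking that no flip introduces a parallel edge, is the heart of the proof, and it is precisely here that the uniform, highly regular form of the alternating currents is used.

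Once we have a triangular embedding of $K_{12s+10}-P_3$, Lemma~\ref{lem-addpath} produces an embedding of type $(6)$, which is nearly triangular, and by Proposition~\ref{prop-trianglebound} it has minimum genus $I(12s+10)$ (a type $(6)$ embedding carries only three extra edges, too few to fit at any smaller genus). I expect the two-flip scheme to be the main difficulty: if it proves too rigid to run uniformly in $s$, the fallback is to permit a third flip—relocating the numbered nonedge one more time before attaching it to the path—or to take the participating vertices to be small fixed residues (such as $0,1,2$) and verify the generic form of the rotations directly. Throughout, the genus and the face counts are pinned down by the Euler polyhedral equation, so nothing beyond the combinatorics of the rotation system is at stake.
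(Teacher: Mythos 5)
Your plan is genuinely different from the paper's, and it founders exactly where the paper's remark after Lemma~\ref{lem-addpath} warns it will. The very first step, the flip ${-}(0,w){+}(x,y)$, requires some row to read $\dots x \; w \; y \dots$ with \emph{exactly one} vertex between $x$ and $y$. But the log of Ringel's Case~10 current graph is (in the paper's notation, $c=7s{+}4$)
$$0. \;\; {-}3c \;\; y \;\; 3c \;\; 1 \;\; c \;\; z \;\; {-}c \;\; \dots \;\; {-}2c{-}1 \;\; 2c \;\; x \;\; {-}2c \;\; \dots$$
so $y$, $z$, $x$ are pairwise separated by at least three numbered entries in row $0$; by the additive rule every numbered row has the letters in the same relative positions, and rows $x$, $y$, $z$ contain only numbered vertices. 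Hence no row of this rotation system exhibits the configuration needed to add any edge of the missing triangle by a single flip, and your assertion that ``the alternating pattern places a numbered neighbour $w$ of $0$ between $x$ and $y$'' is false. This is not a detail to be pinned down later: it is the structural obstruction that makes the ``convert $K_n-K_3$ to $K_n-P_3$ by flips'' route unsuited to index~1 current graphs for Case~10, which is precisely why the paper abandons it here. A longer chain of flips in the style of the Case~8 argument is not ruled out in principle, but you have not exhibited one, and nothing in your write-up supplies the configuration the final flip $+(x,y)$ would need.

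The paper proceeds differently: it flips the rotation at the vertex adjacent to vortex $z$ in the current graph, applies Construction~\ref{construction-k3} to vertex $0$ and the three vortices to open a 12-sided face, and then restores and rearranges edges by chord exchanges inside that face. The arithmetic coincidence that the current $2s{+}1$ into $x$ equals twice the current $-(5s{+}3)$ into $z$ is what makes the chain of exchanges $\pm(2c,3c)\pm(2c{+}1,z)\pm(c{+}1,3c{+}1)\pm({-}c,x)$ terminate on the previously created 5-sided face $[x,{-}c,c,3c,5c]$, leaving a single nontriangular face. If you want to salvage your reduction, you would need to either find a current graph for Case~10 whose log places two vortices at distance two (none is known), or replace the single flip by a verified multi-flip chain; as written, the proof does not go through.
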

\begin{proof}

We use the same current graph as Ringel~\cite{Ringel-MapColor}, except we flip the rotation at the vertex adjacent to vortex $z$, as shown in Figure~\ref{fig-gen10}. Our solution to the additional adjacency problem hinges on the fact that the current $2s{+}1$ flowing into vortex $x$ is twice that of $-(5s{+}3)$, the current flowing into vortex $z$. Let $c = -(5s{+}3) = 7s{+}4$. Then $2s{+}1 = 2c$ and $3s{+}2 = -3c$ in the group $\Z_{12s+7}$. 

\begin{figure}[!ht]
\centering
\includegraphics{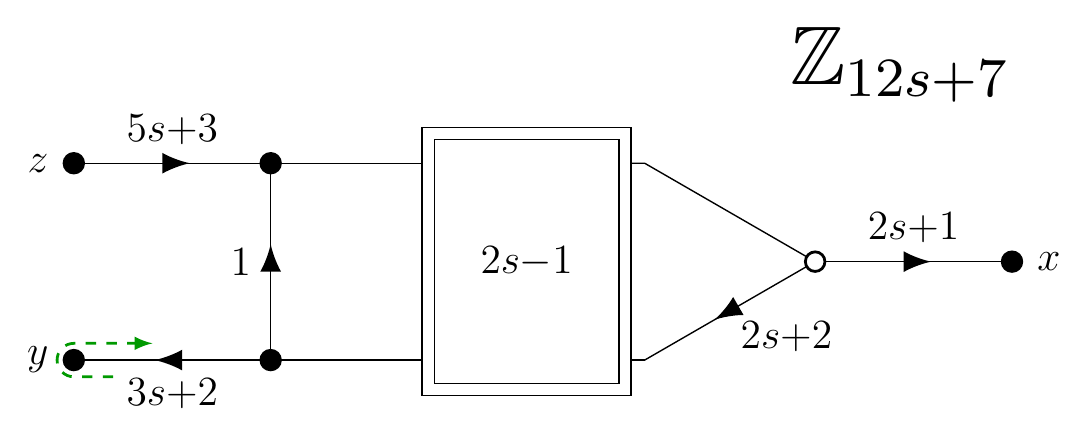}
\caption{A valid current graph generating $K_{12s+10}-K_3$ with the pertinent currents marked. }
\label{fig-gen10}
\end{figure}

After rewriting the currents in Figure~\ref{fig-gen10}, the log of this current graph and some partial rows become 
$$\arraycolsep=5pt\begin{array}{rlllllllllllll}
0. & {-}3c & y & 3c & 1 & c & z & {-}c & \dots & {-}2c{-}1 & 2c & x & {-}2c & \dots \\
c{+}1. & & & & & & & & \dots & {-}c & 3c{+}1 & x & \dots \\
2c. & & & \dots & 2c{+}1 & 3c & z & \dots\\
2c{+}1. & & & & \dots & 3c{+}1 & z & c{+}1 & \dots\\
\end{array}$$
In addition, row $x$ reads{
\setlength{\arraycolsep}{5pt}
$$\begin{array}{rrrrrrrrrrrrrrrrrrrrrrrrrrrrrrrr}
x. & \dots & {-}c & c & 3c & 5c & \dots
\end{array}$$
}
After applying Construction~\ref{construction-k3} to vertices $0$ and $x,y,z$, we obtain the 12-sided face
$$[x, 2c, 0, 3c, y, {-}3c, 0, {-}c, z, c, 0, {-}2c]$$
as in Figure~\ref{fig-c10s2}. We can exchange the chords $(x, c)$ and $(x,3c)$, generating the $5$-sided face $[x, {-}c, c, 3c, 5c]$. There remains only one way of adding back the edges $(0,y)$ and $(0,z)$. With the two remaining quadrilateral faces, we add $(0,x)$ to $[0, {-}2c, x, c]$, and on the other face, we start a sequence of chord exchanges
$$\pm (2c,3c) \pm (2c{+}1, z) \pm (c{+}1, 3c{+}1) \pm ({-}c, x).$$
These swaps are depicted in Figure~\ref{fig-c10s1}. Since the last edge was incident with the $5$-sided face, we get a nearly triangular embedding of $K_{12s+10}$. 
\end{proof}

\begin{figure}[!ht]
\centering
\includegraphics{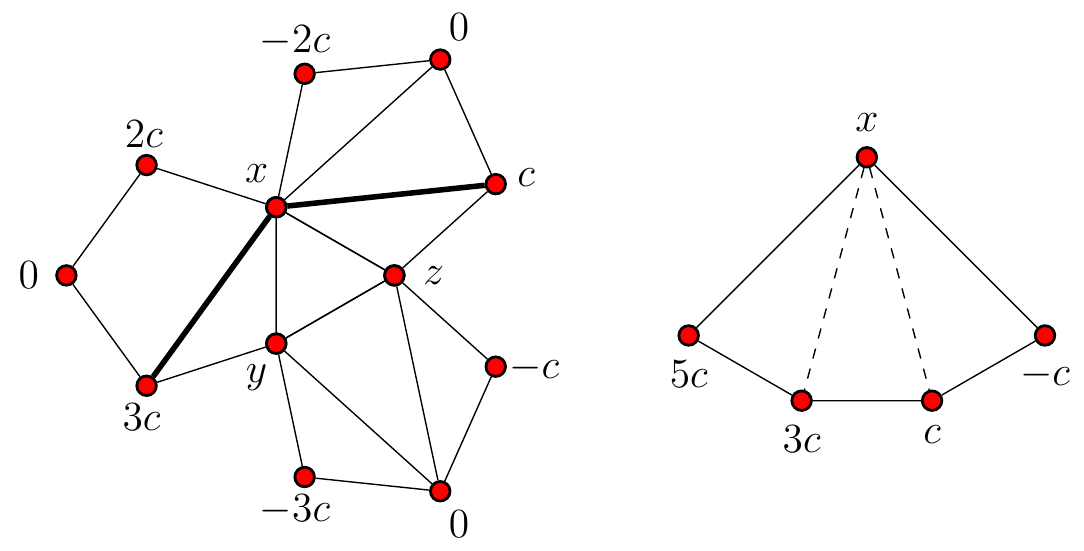}
\caption{Obtaining another 5-sided face using some simple chord exchanges. }
\label{fig-c10s2}
\end{figure}

\begin{remark}
There is a rich family of current assignments derived from graceful labelings of paths (see, e.g., Goddyn \emph{et al.}~\cite{Goddyn-Exponential}). Prior to discovering the solution presented here, the author found a more complicated family of graceful labelings for a similar additional adjacency solution. 
\end{remark}

\begin{figure}[!ht]
\centering
\includegraphics{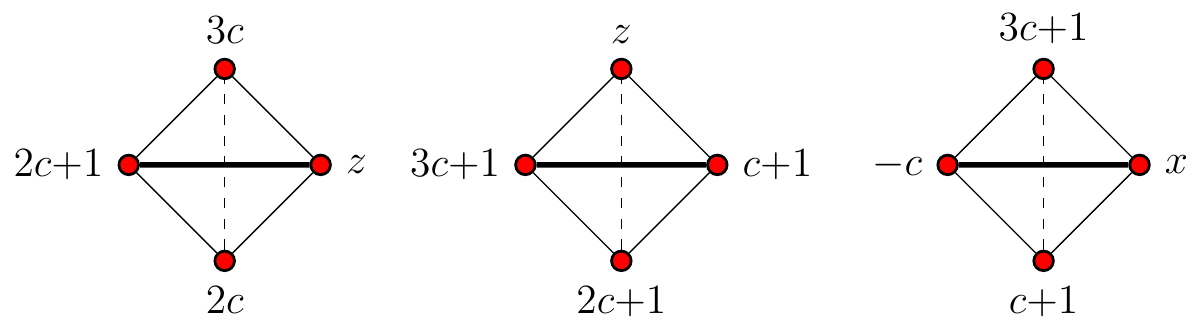}
\caption{Exchanging chords to get a 6-sided face.}
\label{fig-c10s1}
\end{figure}

\section{Case 1}\label{sec-c1}

\begin{theorem}
For $s \geq 1$, there exists a nearly triangular minimum genus embedding of $K_{12s+1}$. 
\end{theorem}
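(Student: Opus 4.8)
The plan is to proceed exactly as in the Case 10 argument (Theorem~\ref{thm-gen10}): start from a known valid current graph generating a triangular embedding of $K_{12s+1}-K_3$, possibly modified by flipping a rotation at a vertex incident with a vortex, and then apply Construction~\ref{construction-k3} to some numbered vertex $0$ together with the three vortices $x,y,z$. This converts three deleted edges into a single $12$-sided face $[a_1,x,c_k,0,b_1,y,a_i,0,c_1,z,b_j,0]$ on a surface of the correct genus $I(12s+1)$. From there the goal is a sequence of chord exchanges $\pm(u,w)$ inside this $12$-gon (which, by construction, never change the genus) that add back all the missing edges --- the three we deleted, $(0,x)$, $(0,y)$, $(0,z)$, plus the three ``$K_3$'' chords $(x,y)$, $(y,z)$, $(x,z)$ --- while leaving exactly one nontriangular (here $6$-sided) face. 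Since $t = 3$ in Case 1, by Proposition~\ref{prop-simple6} it suffices to produce an embedding of type $(6)$, and then types $(5,4)$ and $(4,4,4)$ follow for free.

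The key steps, in order, would be: (1) identify the current graph for $K_{12s+1}-K_3$ in Ringel's book for Case 1, extract the log of its single face, and write down the relevant partial rows --- in particular the entries of the log near the vortex letters and row $x$ (or whichever row we need for the chord-exchange sequence); (2) as in Case 10, look for an arithmetic relation among the currents flowing into the vortices (e.g. one current being a small multiple of another in $\Z_{12s+7}$-analogue group for this case), and if necessary flip the rotation at the vertex adjacent to one vortex so that after Construction~\ref{construction-k3} the $12$-gon has a convenient local structure; (3) carry out chord exchanges: first add the three $K_3$-chords $(x,y),(y,z),(x,z)$ to cut the $12$-gon into three pentagons as in Proposition~\ref{prop-simple6}'s $K_n-K_3$ analogue (the faces $[0,b_1,y,x,c_k]$, etc.), then in one pentagon exchange a chord to create an extra $5$-gon, add back $(0,y)$ and $(0,z)$ where forced, add $(0,x)$ into one remaining quadrilateral, and run a chain of exchanges $\pm(\cdot,\cdot)\pm(\cdot,\cdot)\cdots$ ending on an edge incident with the surviving $5$-gon so that the final face is $6$-sided; (4) verify via Rule~R* / direct face tracing that all intermediate steps are legal, i.e. that every chord being exchanged is genuinely an edge of $K_{12s+1}$ not already incident with the face and that the new chord is not already present --- exactly the kind of ``$a\neq c$'' sanity checks flagged in the remarks after Proposition~\ref{prop-knk2}.

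The main obstacle is step (2)--(3): the remark after Theorem~\ref{thm-gen10} explicitly warns that ``the known current graph constructions for Cases 1 and 10 enjoy no such benefit'' from the edge-flipping/$K_n-P_3$ approach, and more pointedly that Case 1's current graph does not have vortices conveniently placed near each other in the log. So unlike Case 6 (where Lemma~\ref{lem-addpath} applied cleanly) or even Case 10 (where a lucky doubling relation $2s{+}1 = 2c$ among the vortex currents rescued the argument), Case 1 will likely require either discovering the analogous numerical coincidence among the specific currents of Ringel's Case 1 graph --- which may force flipping one or more rotations, or even choosing a different (perhaps graceful-labeling-based, cf. the remark referencing Goddyn \emph{et al.}~\cite{Goddyn-Exponential}) current assignment --- or a longer, more delicate chord-exchange chain whose correctness is genuinely case-specific. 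I expect the bulk of the work, and essentially all the genuine content of the proof, to be in pinning down that chord-exchange sequence and checking each swap is valid; the topological bookkeeping (genus, Euler's formula, Construction~\ref{construction-k3}) is routine given the earlier sections.
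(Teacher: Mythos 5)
Your overall strategy for $s \geq 2$ --- take a current graph for $K_{12s+1}-K_3$, apply Construction~\ref{construction-k3} at a vertex adjacent to all three vortices, and then find a chord-exchange sequence ending in a single $6$-sided face --- is indeed the paper's strategy, but as written your proposal has two genuine gaps. First, the case $s=1$ is not covered at all, and it cannot be covered by your method: as the paper notes (following Ringel), there is no index~1 current graph with three vortices of type (T1) for $K_{13}-K_3$ because $\Z_{10}$ does not have enough generators. The paper disposes of $K_{13}$ separately by observing that the minimum genus embedding already in Ringel's book happens to be nearly triangular; some such separate argument is mandatory, and your proof is incomplete without it.

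Second, for $s \geq 2$ you have correctly identified where the difficulty lies but have not resolved it; you explicitly defer ``essentially all the genuine content'' (your words) --- the existence and verification of the chord-exchange sequence --- to future work. The paper's resolution is not merely a cleverer sequence of exchanges on an existing current graph: for $s \geq 3$ it constructs a \emph{new} family of current graphs (with vortex currents $1$, $3$, $6s{-}3$ in $\Z_{12s-2}$ and a simple zigzag ladder) engineered precisely so that the short exchange sequence $\pm(y,6s{-}3)\pm(6s{-}6,6s)\pm(0,1)$ works uniformly in $s$, and it handles $s=2$ separately from Gustin's relabeled graph with the sequence $\pm(x,3)\pm(2,4)\pm(5,18)\pm(z,13)$. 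Also, your proposed intermediate step of first adding all three chords $(x,y)$, $(y,z)$, $(x,z)$ to cut the $12$-gon into three pentagons is not what the paper does, and on its own it leaves three nontriangular faces with no evident way to reduce to a single one without the extra edges freed up by an exchange chain of exactly the kind you have not exhibited. So the plan points in the right direction, but the theorem is not proved.
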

\begin{proof}
The minimum genus embedding of $K_{13}$ given by Ringel~\cite[p.82]{Ringel-MapColor} already happens to be nearly triangular. The remaining cases are handled by Theorem~\ref{thm-case1new} using current graphs.
\end{proof}
\begin{corollary}
For $s\geq 1$, there exist embeddings of type $(6)$, $(5,4)$, and $(4,4,4)$ of $K_{12s+1}$. 
\end{corollary}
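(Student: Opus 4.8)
The plan is to handle the base case $s=1$ by inspection and to reduce every larger case to a current graph construction. For $K_{13}$, I would take the minimum genus embedding on $S_3$ recorded by Ringel~\cite[p.82]{Ringel-MapColor}, trace its faces via the Heffter--Edmonds principle, and verify that exactly one of them fails to be a triangle; since $t(13)=3$, that face must be a hexagon, so the embedding is already of type $(6)$. This is a bounded computation requiring no new ideas, and it disposes of $s=1$.

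For $s\ge 2$ the argument parallels the treatment of Case~10 (Theorem~\ref{thm-gen10}). The input is the known valid current graph over a cyclic group $\Z_{12s-2}$ generating a triangular embedding of $K_{12s+1}-K_3$, where the three deleted edges join the vortex vertices $x$, $y$, $z$ (the only pairwise nonadjacent vertices). As in Case~10, I do not expect the ``raw'' current graph to produce a type~$(6)$ embedding directly, so the first step is to modify it --- most plausibly by reversing the rotation at a vertex adjacent to one vortex and/or choosing a convenient relabeling of the currents --- so that the currents flowing into $x$, $y$, $z$ obey arithmetic relations (of the shape ``$\alpha=2\beta$'' or ``$\alpha+\beta+\gamma=0$'' in the group) that will make the later bookkeeping close up. Unlike the index-$3$ situations exploited by Lemma~\ref{lem-addpath}, the Case~1 current graph does not give easy access to $K_{12s+1}-P_3$, which is why Construction~\ref{construction-k3} rather than Lemma~\ref{lem-addpath} is the right tool here.

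Next I would read off the log of the single face, extract the partial rows at the numbered vertex $0$ and at the handful of numbered vertices that participate in chord exchanges, and apply Construction~\ref{construction-k3} to $0$ together with $x$, $y$, $z$. This gives an embedding of genus $I(12s+1)$ with a single $12$-sided face $[a_1,x,c_k,0,b_1,y,a_i,0,c_1,z,b_j,0]$ (the $a$'s, $b$'s, $c$'s now explicit group elements) and all other faces triangular. The crux is then to insert the six edges $(0,x)$, $(0,y)$, $(0,z)$, $(x,y)$, $(y,z)$, $(x,z)$ by a carefully ordered sequence of chord exchanges, using the arithmetic relations secured above so that only the last inserted edge is forced to lie as a chord of a nontriangular face, leaving exactly one hexagon. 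Concretely I would first split the $12$-gon using the three ``$K_3$'' chords and two of the $(0,\cdot)$ chords, much as in the $K_3$-addition argument illustrated in Figure~\ref{fig-555}, and then run a short chain $\pm(\cdot,\cdot)\,\pm(\cdot,\cdot)\cdots$ of exchanges, each trading a ``triangle plus $k$-gon'' configuration for a triangle-free rearrangement, that terminates at a $6$-sided face.

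Minimality is then automatic, since an embedding with $t=3\le 5$ extra edges is minimum genus by the counting argument following Proposition~\ref{prop-trianglebound}; and feeding the resulting type~$(6)$ embedding into Proposition~\ref{prop-simple6} yields the types $(5,4)$ and $(4,4,4)$ of the accompanying corollary. The step I expect to be the genuine obstacle is the modification-plus-exchange step: one must find both the right tweak of the current graph and the right order of chord exchanges so that precisely one nontriangular face survives --- delicate because each exchange alters two faces at once, so an unlucky choice either strands two quadrilaterals or re-triangulates the very face one is trying to preserve.
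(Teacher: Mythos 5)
Your outline coincides with the paper's: $s=1$ is disposed of by checking that Ringel's embedding of $K_{13}$ is already nearly triangular, $s\ge 2$ is handled by applying Construction~\ref{construction-k3} to vertex $0$ and the three vortices of a current graph for $K_{12s+1}-K_3$ followed by chord exchanges, and the types $(5,4)$ and $(4,4,4)$ then come for free from Proposition~\ref{prop-simple6}. However, the proposal stops exactly where the real work begins, and you say so yourself: ``the step I expect to be the genuine obstacle is the modification-plus-exchange step.'' That step is the entire content of Theorem~\ref{thm-case1new}, and without exhibiting a concrete current graph together with a concrete terminating sequence of exchanges, nothing has been proved. In particular, your working assumption --- that a small tweak of the known Case~1 current graphs (reversing one rotation, relabeling currents, as in Case~10) will produce the needed arithmetic relations among the vortex currents --- is not borne out by the paper. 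The previously published families for $K_{12s+1}-K_3$ split into two subfamilies according to the parity of $s$, and the author found it necessary to design an entirely new single-zigzag family of current graphs (Figure~\ref{fig-c1s3plain}) valid for all $s\ge 3$, keeping Gustin's relabeled graph (Figure~\ref{fig-c1s2plain}) only for $s=2$. The relevant structure is not a relation of the shape $\alpha=2\beta$ among vortex currents (that is the Case~10 trick); what matters here is the precise placement of the entries $6s{-}3$, $6s{-}6$, $6s$, $0$, $1$ (resp.\ $2,3,4,5,13,18$ for $s=2$) in a few rows, which is what makes the explicit exchange sequences $\pm(y,6s{-}3)\pm(6s{-}6,6s)\pm(0,1)$ and $\pm(x,3)\pm(2,4)\pm(5,18)\pm(z,13)$ close up into a single hexagon.

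A secondary soft spot: you describe the endgame as first splitting the $12$-gon with the three $K_3$ chords and two of the $(0,\cdot)$ chords as in Figure~\ref{fig-555} and then exchanging your way down to a hexagon. Adding all six chords inside the $12$-gon leaves three quadrilaterals (total excess $3$ spread over three faces), and a chord exchange cannot merge two quadrilaterals that share no suitable edge into a hexagon in general; the paper instead runs the exchange chain \emph{before} committing all the chords, using it to relocate one of the twelve boundary edges of the $12$-gon so that the remaining chords can be inserted leaving a single $6$-sided face (Figure~\ref{fig-case1add}). So the order of operations you sketch would need to be reorganized, not merely filled in.
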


Gustin (see Ringel~\cite[\S6.3]{Ringel-MapColor}) found the first complete solution for triangular embeddings of $K_{12s+1}-K_3$. Those current graphs are most elegantly described using the group $\Z_2 \times \Z_{6s-1}$, but since our general solution does not make use of this representation, we have relabeled Gustin's current graph for $s = 2$, as shown in Figure~\ref{fig-c1s2plain}.

\begin{figure}[!ht]
\centering
\includegraphics{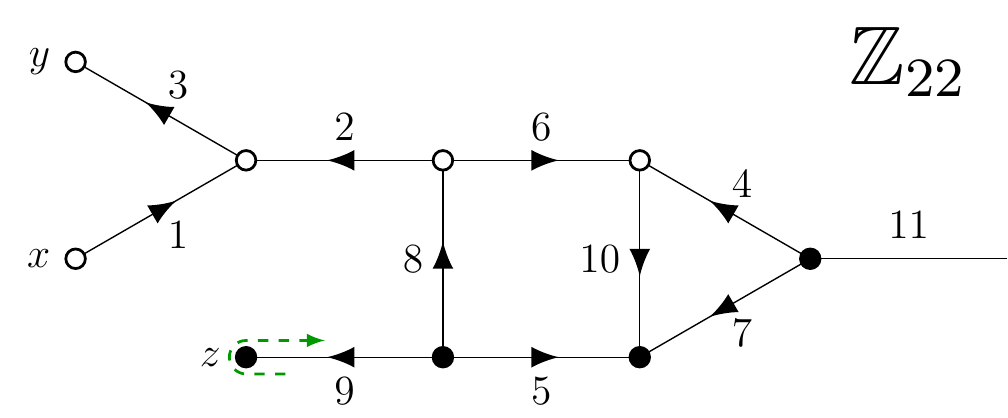}
\caption{Gustin's current graph relabeled.}
\label{fig-c1s2plain}
\end{figure}

\begin{theorem}
For $s \geq 2$, there exists a nearly triangular minimum genus embedding of $K_{12s+1}$.
\label{thm-case1new}
\end{theorem}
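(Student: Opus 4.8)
The plan is to mimic the structure of the Case 10 argument (Theorem~\ref{thm-gen10}): start from Gustin's current graph for $K_{12s+1}-K_3$, modify one rotation at the vertex feeding a vortex so that a useful numerical coincidence between the vortex currents appears, apply Construction~\ref{construction-k3} at vertex $0$ with the three vortex letters $x,y,z$ to open up a 12-sided face while deleting the edges $(0,x),(0,y),(0,z)$, and then add back all missing chords using a carefully chosen sequence of chord exchanges so that exactly one nontriangular (5-sided or 6-sided) face survives. Concretely, I would first write down the log of the (modified) current graph in the group $\Z_{12s-1}$ and extract the handful of partial rows that will participate in the chord exchanges, just as the Case 10 proof records rows $0$, $c{+}1$, $2c$, $2c{+}1$, and $x$.

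The key steps, in order: (1) relabel Gustin's current graph (Figure~\ref{fig-c1s2plain} is the $s=2$ instance) in the cyclic group $\Z_{12s-1}$ and identify an arithmetic relation between the currents flowing into two of the vortices — the analogue of ``$2s{+}1 = 2c$ and $3s{+}2 = -3c$'' in Case 10 — which will let an exchanged chord land inside row $x$ and thereby create a 5-sided face; (2) flip the rotation at the vertex adjacent to the relevant vortex so that this relation is actually usable in the log, and verify that the current graph remains valid (construction principles (C1)--(C6), in particular that KCL and Rule R* are undisturbed away from the flipped vertex); (3) apply Construction~\ref{construction-k3} to vertex $0$ and the vortex letters to obtain the explicit 12-sided face $[a_1,x,c_k,0,b_1,y,a_i,0,c_1,z,b_j,0]$ with the three edges at $0$ deleted; (4) triangulate that face by adding back the deleted edges $(0,x),(0,y),(0,z)$ together with the three chords $(x,y),(y,z),(x,z)$ and whatever further chord exchanges are needed, arranging the order so that the very last exchanged chord is incident with the 5-sided (or 6-sided) face, leaving a single nontriangular face; (5) conclude minimality from Proposition~\ref{prop-trianglebound} (the embedding has at most $3$ extra edges, hence minimum genus), and handle the base case $s=2$ by exhibiting the rotation system explicitly (as done for $K_{13}$, $K_{10}$, etc.), so the general construction only needs $s \geq 3$ — or, better, check that it works for $s=2$ directly.

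The main obstacle will be step (2) combined with step (4): finding the right local modification of Gustin's current graph and the exact sequence of chord exchanges. Unlike the Case 10 current graph, which comes from a graceful labeling of a path with a clean alternating pattern in the ladder rungs, Gustin's construction is naturally phrased over $\Z_2 \times \Z_{6s-1}$, and the remark after Theorem~\ref{thm-gen10} explicitly warns that ``the known current graph constructions for Cases 1 and 10 enjoy no such benefit'' — so there is no free $P_3$-deletion trick available, and I expect to need to work with the $K_3$-deletion plus Construction~\ref{construction-k3}, threading the chord exchanges through several rows simultaneously. Verifying that the chosen exchanges are all legal (each requires that the endpoints of the new chord are consecutive-but-one in some row, i.e. that the relevant quadrilateral genuinely has that chord as a diagonal) and that they collectively leave exactly one nontriangular face is the delicate bookkeeping; I would organize it by tracking only the $O(1)$ rows near the vortices, exactly as in Figures~\ref{fig-c10s2} and~\ref{fig-c10s1}, and present the final face explicitly. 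Once the nearly triangular embedding is in hand, Proposition~\ref{prop-simple6} upgrades it for free to types $(5,4)$ and $(4,4,4)$, so no extra work is needed for the other face distributions in Case 1.
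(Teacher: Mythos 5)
The skeleton you describe --- apply Construction~\ref{construction-k3} at vertex $0$ with the three vortex letters to open a 12-sided face, then restore the deleted and missing edges by a chord-exchange sequence whose last move lands in the one surviving nontriangular face --- is exactly the paper's skeleton, and your step (5) on minimality and on upgrading via Proposition~\ref{prop-simple6} is fine. But there is a genuine gap at precisely the point you flag as ``the main obstacle.'' The paper does \emph{not} obtain the needed rows by flipping a rotation at a vertex of Gustin's current graph and exploiting an arithmetic coincidence among the vortex currents, as in Case 10. Instead, for $s \geq 3$ it introduces an entirely new family of current graphs (Figure~\ref{fig-c1s3plain}): a zigzag-ladder family over $\Z_{12s-2}$ with vortex currents $1$, $3$, and $6s{-}3$, designed so that the log of row $0$ together with the partial rows $6s{-}3$ and $6s{-}6$ supports the short exchange sequence $\pm(y,6s{-}3)\pm(6s{-}6,6s)\pm(0,1)$. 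Gustin's relabeled graph is used only for the single case $s=2$, with its own ad hoc sequence $\pm(x,3)\pm(2,4)\pm(5,18)\pm(z,13)$; it is not the engine of the general argument. Since your plan defers both the current graph and the exchange sequence to future work, and the one concrete modification you propose (a local rotation flip on Gustin's graph plus a Case-10-style current relation) is neither carried out nor known to succeed, the proposal does not yet constitute a proof of the theorem.

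Two smaller corrections. The group for $K_{12s+1}-K_3$ is $\Z_{12s-2}$ (there are $12s+1-3$ numbered vertices), not $\Z_{12s-1}$. And since $t=3$ in Case 1, the unique nontriangular face of a nearly triangular embedding must be $6$-sided; a type $(5)$ outcome is excluded by the Euler count, so the chord exchanges must be aimed at a hexagonal face from the outset (the $5$-sided face appearing in the Case 10 argument is only an intermediate object that absorbs the final exchange).
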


\begin{proof}
In addition to the current graph in Figure~\ref{fig-c1s2plain}, we also make use of Figure~\ref{fig-c1s3plain}, which gives a new triangular embedding of $K_{12s+1}-K_3$ for all $s \geq 3.$
The elements $1$, $3$, and $6s{-}3$ are all generators of $\Z_{12s-2}$, so the vortices are all of type (T1). We note that when $s = 3$, the ladder portion has exactly one rung labeled $9 = 6s{-}9$. 

\begin{figure}[!ht]
    \begin{subfigure}[b]{\textwidth}
        \centering
        \includegraphics[scale=0.9]{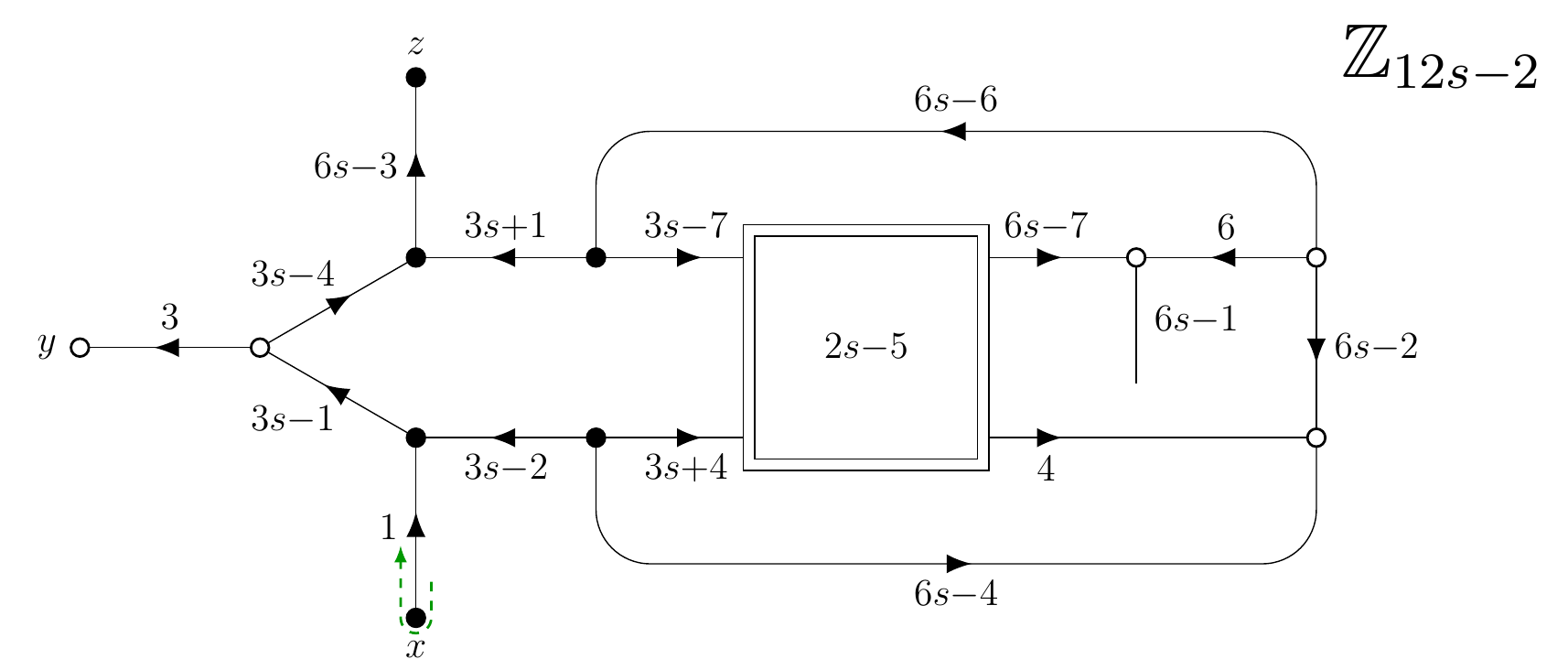}
        \caption{}
        \label{subfig-c1-a}
    \end{subfigure}
    \begin{subfigure}[b]{\textwidth}
        \centering
        \includegraphics[scale=0.8]{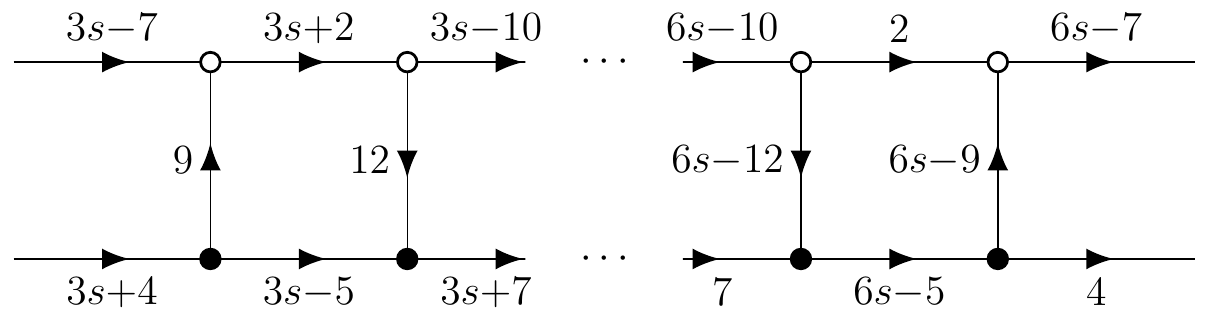}
        \caption{}
        \label{subfig-c1-b}
    \end{subfigure}
\caption{Current graphs for Case 1, $s \geq 4$. The box in the upper half (a) is replaced by the ladder in the bottom half (b).}
\label{fig-c1s3plain}
\end{figure}

For $s = 2$, the embedding produced from the current graph in Figure~\ref{fig-c1s2plain} is of the form 
$$\begin{array}{rcccccccccccccc}
0. & 17 & 9 & z & 13 & \dots & 3 & y & 19 & 21 & x & 1 & 20 & 14 & \dots \\
3. & & & & & & & & \dots & 2 & x & 4 & \dots \\
4. & & & & & & & & & & \dots & 5 & 2 & 18 & \dots \\
18. & 13 & 5 & z &  \dots\\
\end{array}$$
In the general case, we are interested in the following parts:
$$\begin{array}{rcccccccccccccccc}
0. & 6s{-}3 & z & 6s{+}1 & \dots & 6s{+}4 & 6 & 6s{+}5 & \dots & 3 & y & {-}3 & \dots & {-}1 & x & 1 & \dots \\
6s{-}3. & & & & & & & &  \dots & 6s & y & 6s{-}6 & \dots \\
6s{-}6. & & & & \dots & 0 & 6s & 1 & \dots \\
\end{array}$$
In both cases, the relative positions of the letters $x$, $y$, and $z$ in the rotation of 0 is the same, so applying Construction~\ref{construction-k3} on vertex $0$ and vertices $x,y,z$, we get the 12-sided face
$$[z, 6s{-}3, 0, {-}3, y, 3, 0, 1, x, {-}1, 0, 6s{+}1].$$
The sequences of chord exchanges
$$\pm(x,3)\pm(2,4)\pm(5, 18)\pm(z, 13)$$
for $s = 2$ and
$$\pm(y,6s{-}3)\pm(6s{-}6,6s)\pm(0, 1)$$
for $s \geq 3$ removes one of the edges incident with the 12-sided face. If we add the remaining edges according to Figure~\ref{fig-case1add}, we are left with a 6-sided face, indicating that the resulting embeddings are nearly triangular. 
\end{proof}

\begin{figure}[!ht]
\centering
    \begin{subfigure}[b]{0.4\textwidth}
        \centering
        \includegraphics[scale=1]{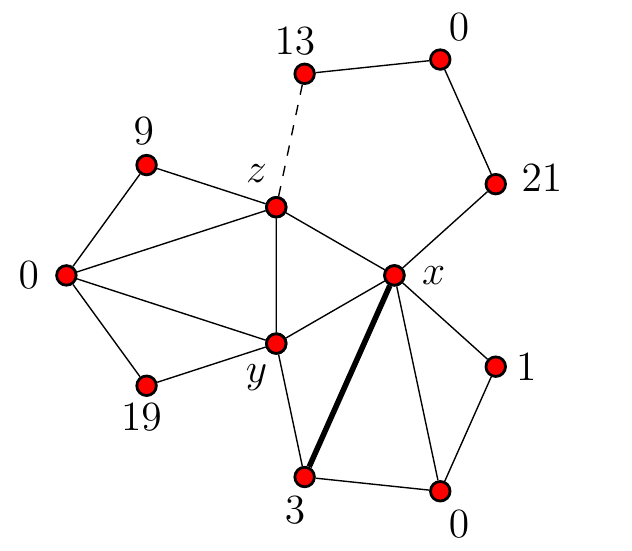}
        \caption{}
        \label{subfig-add1-a}
    \end{subfigure}
    \begin{subfigure}[b]{0.4\textwidth}
        \centering
        \includegraphics[scale=1]{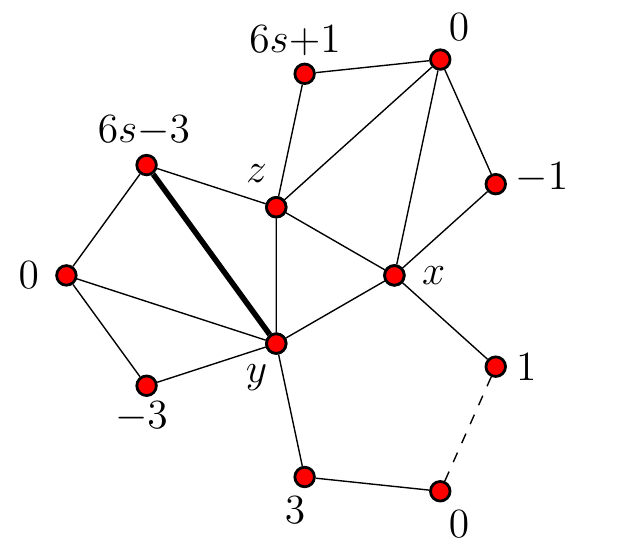}
        \caption{}
        \label{subfig-add1-b}
    \end{subfigure}
\caption{The end result of the Case 1 additional adjacency for $s = 2$ (a) and $s \geq 3$ (b).}
\label{fig-case1add}
\end{figure}

\begin{remark}
To the best of our knowledge, all previously published families of current graphs for orientable triangulations of $K_{12s+1}-K_3$ split into two subfamilies depending on the parity of $s$. Our current graphs in Figure~\ref{fig-c1s3plain} form a solution which handles all $s \geq 3$ irrespective of parity. Another such family of current graphs is presented in Appendix~\ref{app-c1} that extends to the $s=2$ case, though we were unable to use it to prove Theorem~\ref{thm-case1new}.
\end{remark}

\section{Case 8}\label{sec-case8}

\begin{theorem}
For $s\geq 1$, there exists a nearly triangular minimum genus embedding of $K_{12s+8}$. For $s = 0$, there does not exist such an embedding. 
\end{theorem}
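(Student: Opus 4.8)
The plan is to prove the two halves by entirely different methods: a current-graph construction for $s\ge 1$, and a counting argument reducing to the exceptional minimal triangulation of $S_2$ for $s=0$.

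For $s\ge 1$ we have $n=12s+8\equiv 8\pmod{12}$, so $t(n)=2$ and a nearly triangular minimum genus embedding must be of type $(5)$; this is the target. Since $(n-3)(n-4)\equiv 2\cdot 4\pmod{12}$, the natural base object is a \emph{triangular} embedding of $K_{12s+8}-H$ for a conveniently chosen four-edge graph $H$, and one checks from Proposition~\ref{prop-trianglebound} that such an embedding has genus exactly $I(12s+8)-1$. So the first step is to produce triangular embeddings of $K_{12s+8}-H$ for all $s\ge 1$, by relabelling and modifying the current-graph constructions used in the proof of Case~8 of the Map Color Theorem (much as Gross modified Youngs' Case~6 current graphs to obtain Theorem~\ref{thm-gross}, and as we did above in Cases~1 and~10), handling the smallest one or two values of $s$ by hand if necessary.

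The second step is the additional adjacency. Starting from the triangular embedding of $K_{12s+8}-H$, attach one handle --- either by merging two adjacent triangular faces into an octagon as in Cases~2 and~5 (Proposition~\ref{prop-knk2}), or by applying Construction~\ref{construction-k3} to some vertex $v$ and three of its neighbours to create a $12$-gon --- and then reinsert the missing edges of $H$, together with any edges removed by the handle operation, one at a time as chords via chord exchanges. Since each chord exchange lowers the excess of the large face by exactly one, the construction outputs an embedding of type $(5)$ provided the exchanges are organized so that a single pentagonal face remains; Proposition~\ref{prop-simple6} then also yields the type $(4,4)$ embeddings. I expect this step to be the crux: $H$, the current graph, and the vertex $v$ must be chosen so that the (up to eight) endpoints of the edges of $H$ all occur at controllable positions along the large face, and so that an explicit sequence of chord exchanges terminates at exactly one pentagon --- which is precisely why Case~8 is the most delicate additional adjacency in the paper. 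It should also be clear that this construction cannot work for $s=0$, simply because $K_8$'s vertices have too few neighbours, exactly as the type $(6,5)$ and $(5,4,4,4)$ constructions of Proposition~\ref{prop-knk2} fail for $K_5$.

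For $s=0$, suppose toward a contradiction that $K_8$ has a nearly triangular minimum genus embedding. As $\gamma(K_8)=I(8)=2$, it lies in $S_2$ and, by Euler's formula, has $18$ faces; since $3\cdot 18<2|E(K_8)|=56$, at least one face is nontriangular, and ``nearly triangular'' forces exactly one face, necessarily of length $56-3\cdot 17=5$, so the embedding is of type $(5)$. By Lemma~\ref{lem-5simp} the pentagonal face is simple, hence bounded by five distinct vertices. Now insert a new vertex $w$ in the interior of this pentagon and join it to the five boundary vertices by arcs drawn inside the pentagonal disk; this subdivides the pentagon into five triangles without changing the surface, producing a triangulation of $S_2$ with $8+1=9$ vertices. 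But Jungerman and Ringel showed that the orientable surface of genus $2$ is the unique exception to their theorem on minimal triangulations: it has no triangulation with fewer than $10$ vertices. This contradiction completes the case $s=0$. (That $K_8$ nonetheless has a minimum genus embedding of type $(4,4)$ --- the remaining part of the main theorem concerning $K_8$ --- is a separate, positive claim, to be settled by exhibiting an explicit rotation system.)
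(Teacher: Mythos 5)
Your treatment of $s=0$ is correct and is essentially the argument in the paper (Corollary~\ref{corollary-k8}): a nearly triangular embedding of $K_8$ would live in $S_2$ and have a single simple pentagonal face by Lemma~\ref{lem-5simp}; coning a new vertex over that pentagon gives a $9$-vertex triangulation of $S_2$, contradicting the fact that $S_2$ admits no triangulation on fewer than $10$ vertices (the paper cites Huneke for this; the Jungerman--Ringel minimal-triangulation theorem, for which $S_2$ is the exceptional orientable surface, serves equally well). Your genus and face-count bookkeeping for this half is also right.

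The $s\ge 1$ half, however, has a genuine gap, and it sits exactly where you say you expect the crux to be. First, your base object is unsupported: you posit triangular embeddings of $K_{12s+8}-H$ for some four-edge graph $H$ and propose to obtain them ``by relabelling and modifying'' the Case~8 current graphs, but the Ringel--Youngs Case~8 current graphs do not produce $K_{12s+8}$ minus a subgraph at all. They carry a vortex of type (T2), so they triangulate a graph on $12s+9$ vertices $0,\dots,12s{+}5,x,y_0,y_1$ in which $y_0$ and $y_1$ each see only half of the numbered vertices; $K_{12s+8}$ only emerges after one \emph{contracts} the edge $(y_0,y_1)$, which is a different operation from reinserting deleted edges. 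No family of triangular embeddings of $K_{12s+8}-H$ with $|E(H)|=4$ is available off the shelf, and producing one would itself be a Gross-style project rather than a relabelling. Second, even granting a base embedding, the entire content of the existence claim is the explicit additional adjacency: the paper must first perform the edge-flip sequence $-(6s{-}1,x)\pm(6s,6s{-}2)\pm(0,6s{+}4)+(y_0,12s{+}1)$ merely to manufacture a vertex adjacent to all of $x$, $y_0$, $y_1$ before Construction~\ref{construction-k3} can be applied, then add $(y_0,y_1)$ in the resulting $12$-gon, contract it, and place the remaining four edges so that exactly one pentagon survives. ``Organize the chord exchanges so that a single pentagonal face remains'' is the conclusion to be proved, not a step; to make it work the paper exhibits a new family of current graphs for $s\ge 3$, falls back on Ringel--Youngs for $s=2$, and gives an ad hoc index-3 rotation system for $s=1$. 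As written, your proposal establishes the nonexistence half but only outlines, without carrying out, the existence half.
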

\begin{proof}
Corollary~\ref{corollary-k8} shows nonexistence for $K_8$. All remaining values of $s$ are covered by Theorem~\ref{thm-gen8} with a unified additional adjacency step.
\end{proof}
\begin{corollary}
For $s\geq 1$, there exist embeddings of type $(5)$ and $(4,4)$ of $K_{12s+8}$. All minimum genus embeddings of $K_8$ are of type $(4,4)$. 
\end{corollary}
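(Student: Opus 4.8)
This theorem has two parts, and the two parts call for completely different arguments.

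For the positive part ($s\ge 1$) I would follow the two-step template of Sections 5--11. Since $n=12s+8$ has $t(n)=2$, a short computation with Proposition~\ref{prop-trianglebound} shows that if $K_{12s+8}$ with four edges deleted triangulates a surface at all, that surface must be $S_{I(12s+8)-1}$, one genus below the minimum genus of $K_{12s+8}$. The \emph{regular} step is therefore to produce such a triangular embedding from a current graph --- an index-$3$ construction for the base case $K_{20}$, handled directly on the level of rotation systems as announced in Section 4, and an index-$1$ family for $s\ge 2$. The \emph{additional adjacency} step then reinstates the missing edges with a single handle, using a fixed sequence of chord exchanges (in the spirit of Construction~\ref{construction-k3} and the Case 1 and Case 10 arguments) chosen so that exactly one $5$-sided face survives. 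The result is an embedding of type $(5)$, which is of minimum genus by Proposition~\ref{prop-trianglebound}, and Proposition~\ref{prop-simple6} then upgrades it to an embedding of type $(4,4)$. The only real labor here is checking that one fixed choice of chord exchanges behaves uniformly in $s$; this is Theorem~\ref{thm-gen8}.

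The negative part --- that $K_8$ has no nearly triangular minimum genus embedding --- is the heart of the matter and is proved as Corollary~\ref{corollary-k8}. Here $\gamma(K_8)=I(8)=2$, and the Euler polyhedral equation forces any minimum genus embedding to have $18$ faces whose lengths sum to $56$, so the faces of length $>3$ contribute a total excess of exactly $2$: such an embedding is either of type $(5)$ or of type $(4,4)$. Hence it suffices to rule out type $(5)$. Suppose $\phi$ is such an embedding in $S_2$. By Lemma~\ref{lem-5simp} its unique nontriangular face is a simple pentagon $[v_1,v_2,v_3,v_4,v_5]$, occupying five of the eight vertices; let $u_1,u_2,u_3$ be the remaining three. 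Since every other face is a triangle, each $u_i$ has degree $7$ and its rotation is a Hamiltonian link --- a $7$-cycle through the other seven vertices --- while each $v_j$ has degree $7$ with six triangular corners and a single pentagon corner, the pentagon corner sitting between the two pentagon-neighbours of $v_j$ in its rotation.

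From this rigid local picture I would derive a contradiction by a finite case analysis. Each of the three edges inside $\{u_1,u_2,u_3\}$, each of the fifteen edges between $\{u_1,u_2,u_3\}$ and $\{v_1,\dots,v_5\}$, and each of the five pentagon diagonals lies on two triangular faces, and tracing how these triangles are forced to fit around the pentagon and around the Hamiltonian links pins the rotation system down to boundedly many configurations, each of which must then be checked to produce a repeated edge, a parallel edge, a non-orientable surface, or a face of the wrong length. (Equivalently, one may add the two fan chords $(v_1,v_3)$ and $(v_1,v_4)$ inside the pentagon to obtain a triangulation of $S_2$ by the multigraph $K_8$ together with two parallel edges meeting at $v_1$, and rule that triangulation out instead.) This bounded case check is the main obstacle: it is where all the genuinely special behaviour of $K_8$ lives, and there appears to be no shortcut around examining the possibilities essentially by hand. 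Once type $(5)$ is excluded, every minimum genus embedding of $K_8$ is of type $(4,4)$, which is the accompanying corollary.
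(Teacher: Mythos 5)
Your positive part for $s \geq 1$ is essentially the paper's: Theorem~\ref{thm-gen8} supplies the type $(5)$ embedding and Proposition~\ref{prop-simple6} converts it into one of type $(4,4)$. One small inaccuracy: the regular step does not literally triangulate $K_{12s+8}$ minus four edges. The current graphs produce a triangulation on $12s+9$ vertices, with a vortex of type (T2) split into two vertices $y_0, y_1$ that are later joined and contracted; but since you defer the labor to Theorem~\ref{thm-gen8} anyway, this does not damage the argument.

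The genuine gap is in the negative part. Your setup---the pentagonal face is simple by Lemma~\ref{lem-5simp}, three vertices lie off the pentagon with Hamiltonian rotations, every other face is a triangle---is correct, but the proof then terminates in a ``bounded case check'' that is never carried out, together with the assertion that ``there appears to be no shortcut around examining the possibilities essentially by hand.'' There is such a shortcut, and it is the paper's Corollary~\ref{corollary-k8}: instead of adding fan chords inside the pentagon (which, as you note, only yields a \emph{multigraph} triangulation of $S_2$ on $8$ vertices that still has to be excluded by some further argument), one subdivides the pentagonal face with a \emph{new} vertex $v$ joined to its five boundary vertices. This produces a triangular embedding in $S_2$ of the \emph{simple} graph $K_9 - K_{1,3}$ on nine vertices, contradicting Huneke's theorem (Theorem~\ref{ref-huneke}) that a simple graph triangulating $S_2$ must have at least ten vertices. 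Without this reduction---or without actually executing your case analysis---the nonexistence of a type $(5)$ embedding of $K_8$ remains unproved. Finally, note that the paper's proof of the corollary also exhibits a type $(4,4)$ embedding of $K_8$ explicitly, by deleting the two degree-$4$ vertices $x_0, x_1$ from the split-complete graph $G_9$ of Theorem~\ref{thm-case9graphs}; your argument instead leans implicitly on the Map Color Theorem for the existence of some minimum genus embedding, which is legitimate but should be stated.
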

\begin{proof}
For the exceptional case $s = 0$, the graph $G_9$ in Theorem~\ref{thm-case9graphs} (see Ringel~\cite[p.79]{Ringel-MapColor}) has two vertices $x_0$ and $x_1$ of degree 4. Deleting both those vertices leaves an embedding of $K_8$ of type $(4,4)$. 
\end{proof}

We first show the nonexistence of a nearly triangular embedding of $K_8$, which was also verified by an exhaustive computer search. The proof relies on another nonexistence result for so-called ``minimum triangulations'' of surfaces.

\begin{theorem}[Huneke~\cite{Huneke-Minimum}]
If a simple graph $G$ triangulates $S_2$, then $G$ must have at least 10 vertices.
\label{ref-huneke}
\end{theorem}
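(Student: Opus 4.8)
The plan is to reduce Huneke's bound to a small, finite question about rotation systems. First I would apply Euler's formula: in a simple triangulation of $S_2$ on $n$ vertices every face is a triangle, so $2|E| = 3|F|$, and combined with $n - |E| + |F| = -2$ this forces $|E| = 3n + 6$ and $|F| = 2n + 4$. Since the graph is simple, $|E| \le \binom{n}{2}$, i.e. $3n + 6 \le \tfrac12 n(n-1)$, an inequality that fails for every $n \le 8$ (at $n = 8$ it reads $30 \le 28$) and holds with a slack of exactly $3$ at $n = 9$. Hence $n \ge 9$, and the entire content of the theorem is to rule out $n = 9$.

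For $n = 9$ we have $|E| = 33 = \binom{9}{2} - 3$, so the graph must be $K_9 - M$ for a graph $M$ with exactly three edges; up to isomorphism $M$ is one of the five three-edge graphs of Figure~\ref{fig-bcde}, namely three disjoint edges, a triangle, a star $K_{1,3}$, a path with three edges, and a path with two edges together with a disjoint edge, and the embedding has $|F| = 22$. (Conversely, any orientable triangular embedding of a $9$-vertex, $33$-edge graph automatically has $22$ faces and hence lies on $S_2$, so the assertion is equivalent to the statement that none of these five graphs $K_9 - M$ admits an orientable triangular embedding.) I would then settle the five cases by a rotation-system analysis. By the Rule R* criterion recalled above, a triangular embedding is precisely a rotation system in which the rotation at each vertex, which is its link cycle, runs through all of its neighbors, and these links are pairwise compatible across every edge. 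Seeding the search at a vertex of minimum degree keeps each case small: in $K_9 - K_{1,3}$ the star center has degree $5$, so its link is one of only $4!/2 = 12$ cyclic orders, and graph symmetry lets us fix it outright; in the three cases where $M$ is a triangle, a path with three edges, or a path with two edges plus a disjoint edge there is a vertex of degree $6$ to seed. Fixing such a short link determines five or six of the faces, and repeatedly extending by Rule R* either produces an immediate obstruction --- a link that breaks into two or more cycles, an edge forced between two non-adjacent vertices, or an inconsistency in which a triangle is demanded as a face twice --- or closes up to a full rotation system that is then seen to be impossible or already accounted for. Every branch is a bounded check, of the sort the paper notes was also carried out by computer.

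The hard part is $M = 3K_2$, whose degree sequence is $(8,8,8,7,7,7,7,7,7)$: with no vertex of degree $\le 6$ to anchor the search, the branching is much wider (the triangle case, where the three former triangle vertices become pairwise non-adjacent vertices of degree $6$, is the next most stubborn). For $3K_2$ I would hunt for a structural shortcut before grinding --- for instance, for each non-edge $uv$ of $M$, tracking the two triangles that the absent edge $uv$ would otherwise have carried and showing that the faces surviving around $u$ and around $v$ are thereby pinned into a rigid configuration, or a counting/discharging argument over the $22$ faces that exploits the fact that $u$ and $v$ never lie on a common face. Absent a clean reduction, one simply carries out the finite verification. (For Huneke's statement over all surfaces of Euler characteristic $-2$ rather than $S_2$ alone, the same scheme runs on the nonorientable surface $N_4$ using the signed-rotation form of Rule R*, with finitely many more cases; only the orientable case is needed here, and in fact only the case $M = K_{1,3}$ is used --- that is the graph obtained by inserting a new degree-$5$ vertex into the pentagonal face of a hypothetical type-$(5)$ embedding of $K_8$.)
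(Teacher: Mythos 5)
This statement is not proved in the paper at all: it is imported verbatim from Huneke's paper on minimum-vertex triangulations and used as a black box in Corollary~\ref{corollary-k8}, so there is no internal argument to compare yours against. Judged on its own terms, your reduction is correct and is the standard opening move: Euler's formula forces $|E|=3n+6$ and $|F|=2n+4$ for a triangulation of $S_2$, the simplicity bound $3n+6\le\binom{n}{2}$ kills $n\le 8$, and at $n=9$ the graph must be $K_9-M$ with $M$ one of the five three-edge graphs of Figure~\ref{fig-bcde}. Your observation that only the $M=K_{1,3}$ case is actually invoked by the paper (via subdividing the pentagonal face of a hypothetical type-$(5)$ embedding of $K_8$) is also accurate.

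The gap is that everything after the reduction is a plan rather than a proof. The entire content of Huneke's theorem is the nonexistence of triangular embeddings in $S_2$ of those five specific graphs, and your proposal disposes of them with ``repeatedly extending by Rule R* either produces an immediate obstruction \dots or closes up'' and ``one simply carries out the finite verification.'' No branch of that search is actually executed, no obstruction is exhibited for any of the five graphs, and the proposed ``structural shortcut'' for the hardest case $M=3K_2$ is explicitly left as a hope. The finite check is not a formality here --- it is the whole theorem, and it is what occupies the bulk of Huneke's argument (which, for what it is worth, must also handle the nonorientable surface of the same Euler characteristic, a case your orientable-only Rule R* framework would not cover, though the paper's statement is restricted to $S_2$ so that is not strictly required of you). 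As written, the proposal establishes $n\ge 9$ and correctly isolates what remains to be shown, but does not show it.
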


\begin{corollary}
$K_8$ does not have a nearly triangular minimum genus embedding.
\label{corollary-k8}
\end{corollary}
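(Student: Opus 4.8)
The plan is to derive a contradiction from the assumption that $K_8$ has a nearly triangular minimum genus embedding. First I would pin down the parameters: $K_8$ has $\gamma(K_8) = I(8) = \lceil 20/12 \rceil = 2$, so a minimum genus embedding lies in $S_2$, and the Euler polyhedral equation gives $8 - 28 + |F| = 2 - 4$, hence $|F| = 14$. If such an embedding were nearly triangular, it would have at most one nontriangular face. Counting corners (each face of length $k$ contributing $k$, and the total being $2|E| = 56$), thirteen triangles account for $39$ corners, forcing the single remaining face to have length $17$; more to the point, the residue analysis in the introduction shows that for $n \equiv 8 \pmod{12}$ we need $t \equiv 2 \pmod 6$, and a nearly triangular embedding would correspond to a single face of length $t + 3 = 5$ — but $t = 2$ extra edges cannot be packed into one face, since a face of length $5$ absorbs exactly $2$ chords while $13 \cdot 3 + 5 = 44 \neq 56$. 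So in fact a nearly triangular embedding of $K_8$ is impossible purely by parity, and the real content — an embedding of type $(5)$ — is already ruled out this way. Hence the corollary follows once we observe that type $(4,4)$ is the only option consistent with the Euler equation, and I should instead argue that even the type $(5)$ possibility, were the arithmetic to permit it, is excluded — but since it does not permit it, the cleaner route invokes Huneke's theorem as the paper signals.

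Taking the hint from the surrounding text, the intended argument is: suppose $K_8$ has a nearly triangular minimum genus embedding $\phi$ in $S_2$. By Lemma~\ref{lem-5simp}, the unique nontriangular face — which must be $5$-sided by the Case~8 arithmetic ($t=2$, so type $(5)$ is the only single-face possibility) — is simple, say of the form $[\ldots a, b, c \ldots]$ with $a \neq c$ and the edge $(a,c)$ not on this face. The chord exchange $\pm(a,c)$ that appears in Proposition~\ref{prop-simple6} is not quite what we want here; instead, we want to \emph{add} a chord. Since $a$ and $c$ are distinct and nonadjacent \emph{along this face}, but $K_8$ is complete so $(a,c)$ is an edge living on two triangular faces elsewhere, we cannot simply add it. The right move is to take the $5$-sided face and insert a diagonal: pick two vertices of the pentagon at distance $2$, delete the edge between them from wherever it currently sits, and re-insert it as a chord of the pentagon, producing an embedding of type $(4,4)$ — no, we need the reverse direction. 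Let me restate: the genuine obstruction argument is that a type $(5)$ embedding of $K_8$, if it existed, could be converted into a triangulation of $S_2$ by adding one chord inside the pentagon (splitting it into a triangle and a quadrilateral) and then adding a second chord inside the quadrilateral; but this would be a triangulation of $S_2$ by a graph on $8$ vertices, contradicting Huneke's Theorem~\ref{ref-huneke}, which requires at least $10$ vertices.

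So the key steps, in order, are: (1) compute that a minimum genus embedding of $K_8$ is in $S_2$ with $14$ faces, and that the only way to have at most one nontriangular face compatible with $56 = 2|E|$ corners is a single $5$-sided face, i.e. an embedding of type $(5)$; (2) apply Lemma~\ref{lem-5simp} to see this pentagonal face is simple, so it has a genuine diagonal $(p, r)$ with $p, r$ distinct; (3) insert this diagonal — and then one further diagonal in the resulting quadrilateral — as new edges, each insertion splitting one face into two without changing the surface, to obtain a triangulation of $S_2$; (4) observe this triangulation has the same $8$ vertices as $K_8$, so it is a simple graph triangulating $S_2$ on $8 < 10$ vertices, contradicting Theorem~\ref{ref-huneke}. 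I expect the main subtlety to be step (3): one must verify that adding a diagonal of the pentagon really does keep the embedding simple (no parallel edges or loops created) — but since the pentagon is simple and its diagonal connects two vertices already present, the new edge is genuinely new within this face, and after the first split the quadrilateral is still simple with a legitimate diagonal, so both insertions are valid. The only place to be careful is confirming that the diagonal edge is not \emph{already} present elsewhere in the embedding in a way that would make it parallel — but a triangulation is a simple graph, and we are only claiming the \emph{result} is a simple triangulation; since $K_8$ already contains every edge, adding a ``diagonal'' means we are not adding a new abstract edge at all. This forces a reinterpretation: what we actually do is a chord \emph{exchange}, moving an existing edge of $K_8$ into the pentagon, which turns type $(5)$ into type $(4,4)$ — and type $(4,4)$ is not a triangulation. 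So the clean contradiction is simply that type $(5)$ reduces, via Proposition~\ref{prop-simple6}, only to type $(4,4)$, never to a triangulation, meaning Huneke's theorem is invoked differently: a type $(5)$ embedding \emph{of $K_8$ minus one edge} would triangulate $S_2$ on $8$ vertices. I would therefore argue: if $K_8$ had a type $(5)$ embedding, then deleting a diagonal edge of the pentagon (which exists and is distinct from its sides by Lemma~\ref{lem-5simp}) and re-adding it as a chord gives type $(4,4)$; but then also the original type $(5)$ embedding restricted appropriately, or rather, we add \emph{two} chords inside the pentagon using edges borrowed from adjacent triangles — and careful bookkeeping shows this cannot close up to a triangulation on only $8$ vertices, which is exactly what Huneke forbids. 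The essential point, and the thing to get right in the writeup, is exhibiting the reduction from a hypothetical type-$(5)$ embedding of $K_8$ to a simple triangulation of $S_2$ on at most $9$ vertices, then quoting Theorem~\ref{ref-huneke}.
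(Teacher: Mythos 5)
There is a genuine gap. Your setup is right --- the Case~8 arithmetic forces a hypothetical nearly triangular embedding of $K_8$ in $S_2$ to be of type $(5)$, Lemma~\ref{lem-5simp} makes the pentagonal face simple, and Theorem~\ref{ref-huneke} is indeed the tool --- but you never produce the reduction that makes Huneke's theorem applicable, and you repeatedly run into the obstruction you yourself identify: every ``diagonal'' of the pentagon is already an edge of $K_8$ sitting elsewhere in the embedding, so it cannot be \emph{added} without creating a parallel edge, and merely \emph{exchanging} it into the pentagon only converts type $(5)$ into type $(4,4)$ while opening a new quadrilateral where the edge used to live. Your closing sentence (``careful bookkeeping shows this cannot close up to a triangulation'') is not an argument. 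The missing idea in the paper's proof is to \emph{subdivide} the pentagonal face: place a new ninth vertex $v$ in its interior and join $v$ to the five (distinct, by Lemma~\ref{lem-5simp}) boundary vertices. This triangulates the pentagon without touching any other edge, yielding a triangular embedding in $S_2$ of the simple graph $K_9 - K_{1,3}$ on $9$ vertices, which Theorem~\ref{ref-huneke} forbids. You state the target (``a simple triangulation of $S_2$ on at most $9$ vertices'') but never exhibit it.

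Separately, your opening paragraph contains an arithmetic error that leads you to a false conclusion: the Euler equation gives $8 - 28 + |F| = 2 - 2\cdot 2 = -2$, so $|F| = 18$, not $14$. With $18$ faces, seventeen triangles contribute $51$ corners and the last face has $56 - 51 = 5$ sides, so the corner count is perfectly consistent with a type $(5)$ embedding; nonexistence does \emph{not} follow ``purely by parity.'' If it did, the entire appeal to Huneke's theorem would be unnecessary, and the corollary would not be the genuinely topological fact that it is.
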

\begin{proof}
Suppose such an embedding exists. The Map Color Theorem states that the minimum genus of $K_8$ is 2, and furthermore, a nearly triangular embedding in the surface $S_2$ would have a simple 5-sided face as a consequence of Proposition~\ref{prop-trianglebound} and Lemma~\ref{lem-5simp}. Subdivide the face by adding a new vertex $v$ inside the face and add edges to connect $v$ to the vertices on the boundary of the face. Now, we have a triangular embedding of the simple graph $K_9-D$ in $S_2$, where $D = K_{1,3}$ is shown in Figure~\ref{fig-bcde}. However, Theorem~\ref{ref-huneke} states that no such embedding exists. 
\end{proof}

In the additional adjacency steps of both Cases 8 and 11, edge flips are used to sacrifice one existing edge to gain a previously missing edge. For example, suppose we had the following partial table of a triangular embedding:
$$\arraycolsep=5pt\begin{array}{rlllllllllllll}
a. & \dots & c & b & d & \dots \\
c. & \dots & e & d & f & \dots \\
e. & \dots & g & f & h & \dots \\
\end{array}$$
and $(g,h)$ is not an edge of the graph. Then we can do the edge flips
\begin{align*}
-(e,f)&+(g,h)\\
-(c,d)&+(e,f)\\
-(a,b)&+(c,d)
\end{align*}
to add $(g,h)$ at the cost of $(a,b)$. For brevity, we write this operation as the \emph{sequence of edge flips}
$$-(a,b)\pm(c,d)\pm(e,f)+(g,h).$$
The notation suggests that we can view this operation alternatively as deleting the edge $(a,b)$, exchanging the chords $(c,d)$ and $(e,f)$, and then finally adding $(g,h)$. 

\begin{theorem}
For $s \geq 1$, there exists a nearly triangular minimum genus embedding of $K_{12s+8}$. 
\label{thm-gen8}
\end{theorem}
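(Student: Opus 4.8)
The goal is to produce an embedding of $K_{12s+8}$ of type $(5)$; since $t(12s+8)=2$, any such embedding has only two extra edges and is therefore of minimum genus by the Euler equation (cf.\ Proposition~\ref{prop-trianglebound}), and Proposition~\ref{prop-simple6} then upgrades it to the type $(4,4)$ embedding of the corollary. The plan is to follow the blueprint of Theorems~\ref{thm-gen10} and~\ref{thm-case1new}. Start from a triangular embedding, in the surface of genus $I(12s+8)-1$, of $K_{12s+8}$ with four edges removed --- this is the regular step of Case~8 of the Map Color Theorem (Ringel~\cite{Ringel-MapColor}), which I would, if possible, recast as a single family of current graphs valid for all $s\ge 1$ (the exceptional $K_{20}$, already flagged as arising from an index~$3$ current graph, being treated by an explicit rotation table). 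Applying Construction~\ref{construction-k3} at a well-chosen vertex $v$ raises the genus to $I(12s+8)$ while deleting three further edges at $v$, and produces the single $12$-sided face $[a_1,x,c_k,v,b_1,y,a_i,v,c_1,z,b_j,v]$. Now exactly seven edges of $K_{12s+8}$ are missing, and a $12$-gon is cut into seven triangles and one pentagon by precisely seven non-crossing chords, so the task reduces to re-inserting all seven missing edges into this region.

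Concretely, first I would display the relevant current graph(s) and extract only the part of the rotation system that is needed: row $0$ (the log), the rows of the three vortices $x,y,z$, and the few numbered rows near the edges that will be flipped. Second, I would locate $x$, $y$, $z$ in row $0$, apply Construction~\ref{construction-k3}, and write down the explicit $12$-gon. Third --- the core of the proof --- I would exhibit an explicit sequence of chord exchanges and sequences of edge flips $-(a,b)\pm(c,d)\pm(e,f)+(g,h)$ that first migrates each originally missing edge onto the $12$-gon (or into a quadrilateral adjacent to it) and then re-adds all seven edges, arranged so that exactly one $5$-sided face survives. Finally, Lemma~\ref{lem-5simp} guarantees that this pentagon is simple, so the embedding is nearly triangular and of minimum genus, which with Proposition~\ref{prop-simple6} also gives the type $(4,4)$ embedding.

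I expect the third step to be the main obstacle. In Cases~1 and~10 we had $t=3$ and only had to engineer a single $6$-sided face, which left some slack in how the extra edges could be distributed; here $t=2$ forces the seven missing edges to cut the $12$-gon into seven triangles and exactly one pentagon, so the sequence of flips must be chosen with essentially no freedom. More delicate still is arranging the regular-step current graph so that the local rotation picture around $v$, and around every flipped edge, is the same for all $s\ge 1$ --- exactly the kind of uniform solution highlighted in the remarks to Cases~1 and~10. I would expect $K_{20}$ and perhaps one or two other small cases to require individual verification from explicit rotation tables, while $K_8$ is genuinely and necessarily excluded (Corollary~\ref{corollary-k8}).
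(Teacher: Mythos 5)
There is a genuine gap, and it sits at the very first step: your claimed starting object is not what the Case~8 regular step produces. The current graphs for Case~8 (both Ringel--Youngs' and the new family in the paper) have a vortex of type (T2), so they yield a triangular embedding of a graph on $12s+9$ vertices $0,\dotsc,12s{+}5,x,y_0,y_1$ in which $y_0$ and $y_1$ are each adjacent to only half of the numbered vertices --- not a triangular embedding of ``$K_{12s+8}$ with four edges removed'' on the correct vertex set. No such four-edges-removed triangulation is known or cited, so the object your argument manipulates is not available. This mismodeling propagates: because $x$, $y_0$, $y_1$ are pairwise nonadjacent and initially have \emph{no common neighbor}, Construction~\ref{construction-k3} cannot be applied directly; the paper must first run the edge-flip sequence $-(6s{-}1,x)\pm(6s,6s{-}2)\pm(0,6s{+}4)+(y_0,12s{+}1)$ to manufacture a vertex ($12s{+}1$) adjacent to all three. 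Your proposal has no analogue of this preparatory step.

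The second missing ingredient is the identification of $y_0$ and $y_1$. Since $y$ appears in the regular step as two half-vertices, the additional adjacency step must add the edge $(y_0,y_1)$ inside the $12$-sided face and then \emph{contract} it to form the single vertex $y$ of $K_{12s+8}$; this turns the $12$-gon into a $4$-gon plus an $8$-gon, after which only four edges $(x,y)$, $(y,12s{+}1)$, $(x,12s{+}1)$, $(x,6s{-}1)$ remain to be inserted (Figure~\ref{fig-case8-48}). Your picture of ``seven non-crossing chords cutting a $12$-gon into seven triangles and one pentagon'' is internally consistent arithmetic, but it is not the combinatorial situation that actually arises, and chord insertion alone cannot merge two distinct vertices of the embedded graph into one vertex of $K_{12s+8}$. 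The overall blueprint (uniform current-graph family for $s\ge 3$, special treatment of $s=1,2$, Construction~\ref{construction-k3}, edge flips, final type-$(5)$ face) is the right one, but without the common-neighbor flip and the add-and-contract step the proof does not go through.
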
 
\begin{proof}
We will use the novel family of current graphs in Figure~\ref{fig-newgraph} for all $s \geq 3$. 
For $s = 2$, we appeal to Ringel and Youngs~\cite{RingelYoungs-Case8} for the valid current graph in Figure~\ref{fig-cur-case8}, and for $s=1$, we use the rotation system generated by the rows
$$\begin{array}{rrrrrrrrrrrrrrrrrrrrrrrrrrrrrrrrrrrrrrrrrrrrr}
0. & 1 & 5 & 3 & 14 & 11 & 15 & 2 & 7 & 12 & 16 & 4 & 6 & 13 & 10 & y & 8 & 9 & 17 & x \\
1. & 0 & x & 2 & 16 & 6 & 12 & 14 & 7 & 3 & 15 & 13 & 8 & 10 & 17 & 11 & y & 9 & 4 & 5 \\
2. & 0 & 15 & 16 & 1 & x & 3 & 12 & y & 10 & 8 & 17 & 6 & 9 & 5 & 11 & 14 & 4 & 13 & 7
\end{array}$$
and the group $\Z_{18}$. The resulting triangulations have vertices $0, 1, \dotsc, 12s{+}5, x, y_0, y_1$, where all the numbered vertices are adjacent, $x$ is adjacent to all the numbered vertices, and $y_0$ and $y_1$ are adjacent to all the even and odd numbered vertices, respectively. We use one handle to connect $y_0$, $y_1$, and $x$. Then, contracting the edge $(y_0, y_1)$ yields a minimum genus embedding of $K_{12s+8}$. Initially, however, there is no vertex adjacent to all three lettered vertices.

\begin{figure}[!ht]
\centering
\includegraphics[scale=0.9]{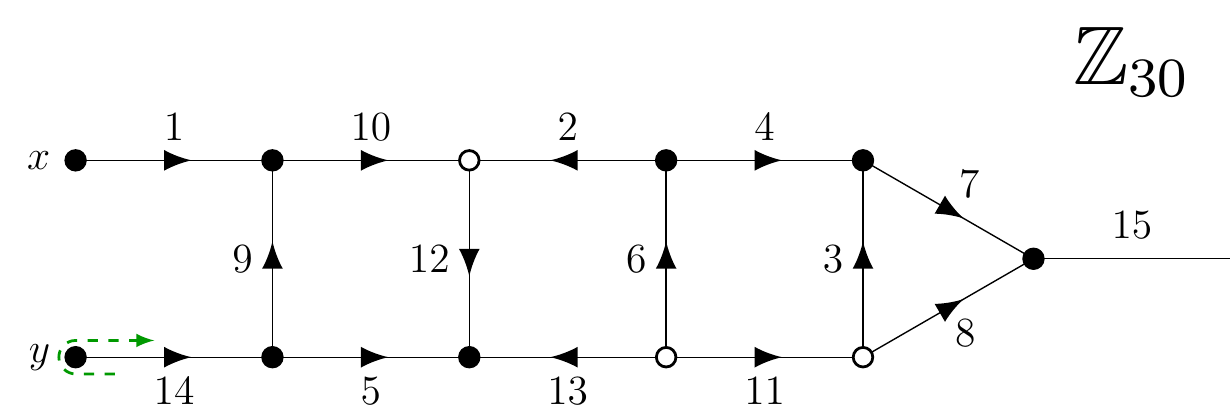}
\caption{The current graph of Ringel and Youngs~\cite{RingelYoungs-Case8} for $K_{32}$.}
\label{fig-cur-case8}
\end{figure}

\begin{figure}[!ht]
\centering
    \begin{subfigure}[b]{\textwidth}
        \centering
        \includegraphics[width=\textwidth]{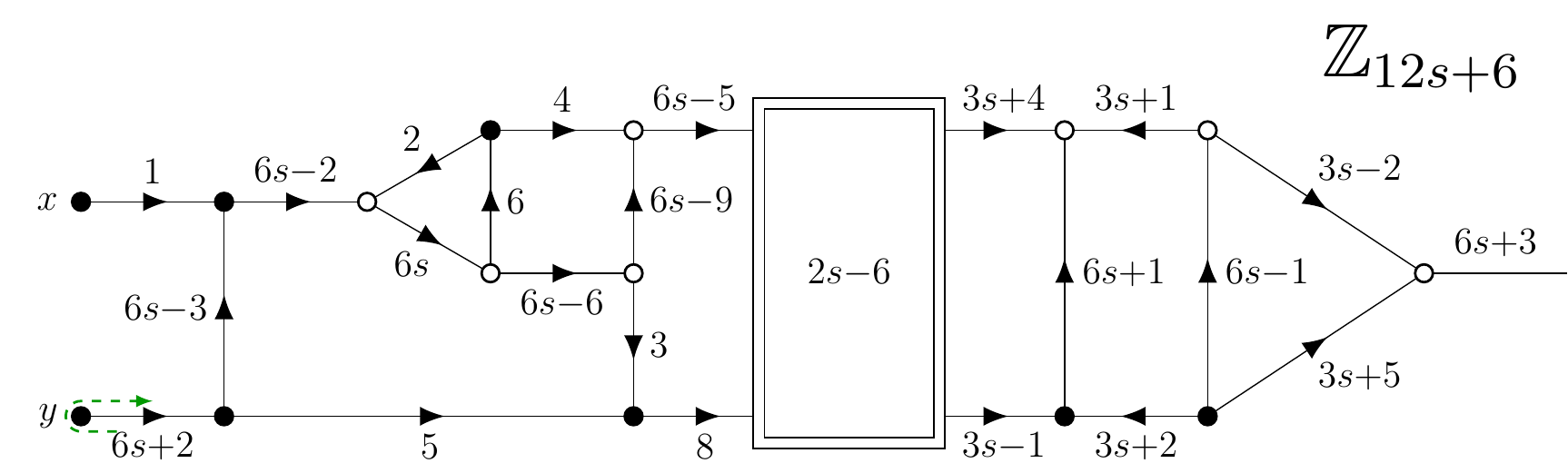}
        \caption{}
        \label{subfig-c8-a}
    \end{subfigure}
    \begin{subfigure}[b]{\textwidth}
        \centering
        \includegraphics[scale=0.8]{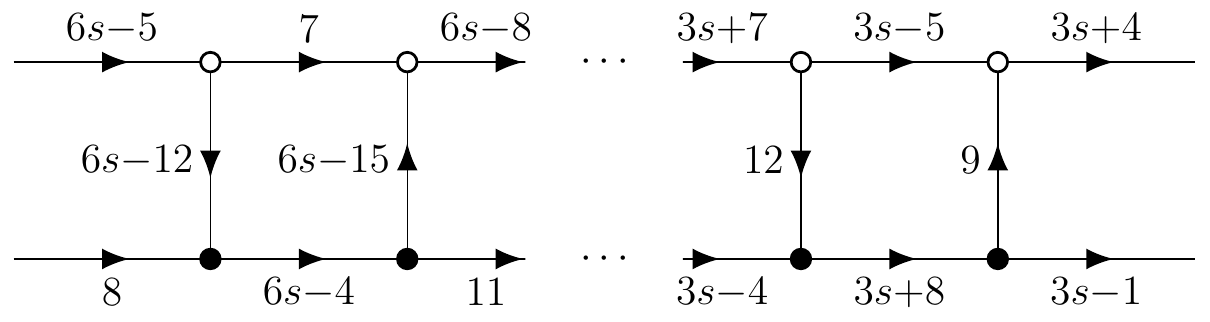}
        \caption{}
        \label{subfig-c8-b}
    \end{subfigure}
\caption{A new family of current graphs for $s\geq3$.}
\label{fig-newgraph}
\end{figure}

For $s \geq 2$, row $0$ of the embedding is of the form
{
\setlength{\arraycolsep}{5pt}
$$\begin{array}{rrrrrrrrrrrrrrrrrrrrrrrrrrrrrrrr}
0. & 6s{+}4 & y_0 & 6s{+}2 & \dots & {-}1 & x & 1 & \dots & 6s{+}9 & 5 & \dots & 6s{+}6 & 12s{+}4 & 4 & \dots
\end{array}$$
}
In all cases, including $s=1$, employing the additivity rule yields the following partial rows:
\begin{align*}
6s{-}1&. \,\,\,\, \dots \,\,\,\, 6s{-}2 \,\,\,\, x \,\,\,\, 6s \,\,\,\, \dots \\
6s&. \,\,\,\, \dots \,\,\,\, 0 \,\,\,\, 6s{-}2 \,\,\,\, 6s{+}4 \,\,\,\, \dots 
\end{align*}
and a slightly more descriptive partial row for $12s{+}1$:
{
\setlength{\arraycolsep}{5pt}
$$\begin{array}{rrrrrrrrrrrrrrrrrrrrrrrrrrrrrrrr}
12s{+}1. & \dots & 6s{-}1 & y_1 & 6s{-}3 & \dots & 12s & x & 12s{+}2 & \dots & 6s{+}4 & 0 & \dots
\end{array}$$
}

Judging from these rows, we can perform the sequence of edge flips 
$$-(6s{-}1,x)\pm(6s,6s{-}2)\pm(0,6s{+}4)+(y_0,12s{+}1),$$ 
to produce a vertex adjacent to all three of $x$, $y_0$, and $y_1$ in preparation for Construction~\ref{construction-k3}. 
Row $12s{+}1$ now is of the form
{
\setlength{\arraycolsep}{5pt}
$$\begin{array}{rrrrrrrrrrrrrrrrrrrrrrrrrrrrrrrr}
12s{+}1. & \dots & 6s{-}1 & y_1 & 6s{-}3 & \dots & 12s & x & 12{+}2 & \dots & 6s{+} 4 & y_0 & 0 & \dots
\end{array}$$
}
as illustrated in Figure~\ref{fig-case8-initial}. If we apply Construction~\ref{construction-k3} to vertex $12s{+}1$ and neighbors $y_0, y_1, x$, we obtain the 12-sided face
$$[y_0, 6s{+}4, 12s{+}1, 6s{-}3, y_1, 6s{-}1, 12s{+}1, 12s{+}2, x, 12s, 12s{+}1, 0].$$
Adding the edge $(y_0, y_1)$ in this face and contracting it to make a new vertex $y$ yields one 4-sided face and one 8-sided face, and the remaining edges $(x,y)$, $(y, 12s{+}1)$, $(x, 12s{+}1)$, and $(x, 6s{-}1)$ can be added back in, pursuant to Figure~\ref{fig-case8-48}, to produce an embedding of type $(5)$. 
\end{proof}

\begin{figure}[!ht]
\centering
\includegraphics{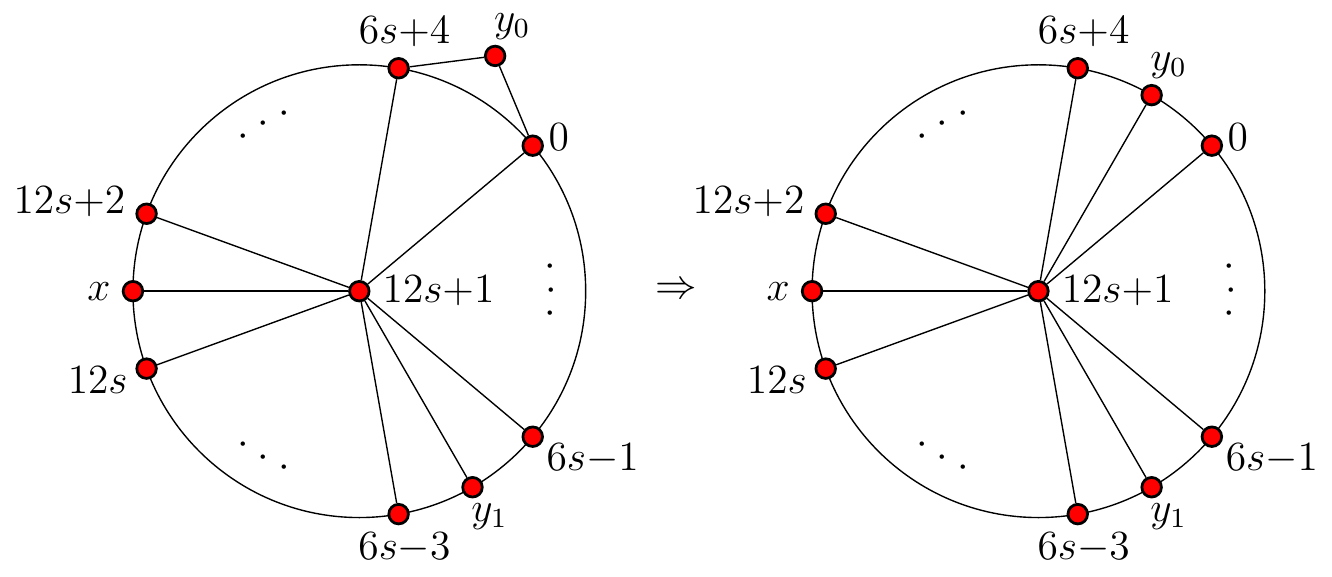}
\caption{The rotation at vertex $12s{+}1$ after the initial modifications. }
\label{fig-case8-initial}
\end{figure}

\begin{figure}[!ht]
\centering
\includegraphics{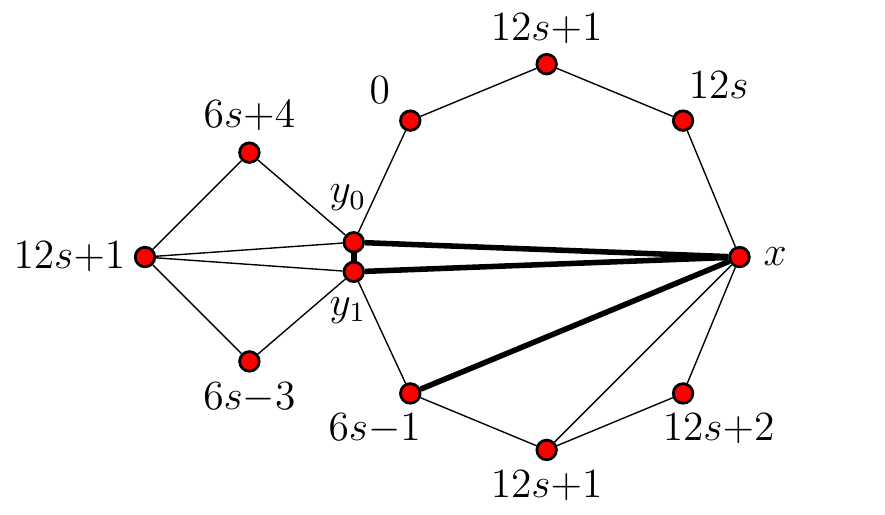}
\caption{Using one handle to connect $x$ with $y$ and to replace the missing edge $(x, 6s{-}1)$. The edge $(y_0, y_1)$ is contracted and the amalgamated vertex is renamed $y$. } 
\label{fig-case8-48}
\end{figure}

\begin{remark}
The rotation system used for $s=1$ can be interpreted as an index 3 current graph. To our knowledge, we have given the first minimum genus embedding of $K_{20}$ succinctly derived from a symmetric embedding.

We made use of a current graph of Ringel and Youngs~\cite{RingelYoungs-Case8}, but we did not include any of their other constructions. In fact, their family of current graphs for $s \geq 4$ are also applicable for the additional adjacency solution presented here. Our family of current graphs for $s \geq 3$, while slightly more complicated in terms of the underlying graph, benefits from a significantly simpler current assignment, where the generalization is, like Figure~\ref{fig-c1s3plain} for Case 1, a simple zigzag. This pattern is ``smooth'' in the sense of Guy and Youngs~\cite{GuyYoungs}. In addition, our solution handles the odd and even $s$ cases simultaneously, and it extends downwards to $s=3$, for which Ringel and Youngs~\cite{RingelYoungs-Case8} needed a special solution. 
\end{remark} 

\section{Case 11}

\begin{theorem}
For $s \geq 0$, there exists a nearly triangular minimum genus embedding of $K_{12s+11}$.
\end{theorem}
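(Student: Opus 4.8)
The plan is to follow the template already used for the harder Cases 8 and 1: dispatch the small order $K_{11}$ ($s=0$) by an explicit rotation system, and treat all larger $s$ by a single current-graph construction together with an additional-adjacency surgery built from Construction~\ref{construction-k3} and a chain of chord exchanges. Since $n=12s+11$ has $t(n)=2$, the target distribution for a nearly triangular embedding is type $(5)$; by Proposition~\ref{prop-simple6} this automatically delivers the remaining admissible type $(4,4)$, so it is enough to produce a single $5$-sided face.

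For the regular step I would seek a triangular embedding of $K_{12s+11}-H$ for a suitable graph $H$ on four edges (obtained either by relabeling a classical current graph for this Case or, more likely, from a new zigzag family in the spirit of Figure~\ref{fig-c1s3plain}). An Euler count pins down why four is the right number of missing edges: $\binom{12s+11}{2}-4 = 3(12s+11)-6+6g_0$ gives $g_0=12s^2+15s+4$, exactly one less than $I(12s+11)=12s^2+15s+5$, so a single handle closes the gap. Picking the vertex $v=0$ and three of its neighbours $x,y,z$ so that the four edges of $H$ can be realized among the non-$v$ corners of the face produced by Construction~\ref{construction-k3}, one obtains a $12$-sided face in a minimum genus embedding that is missing precisely the three edges $(0,x),(0,y),(0,z)$ deleted by the Construction together with the four edges of $H$. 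Re-inserting these seven edges as chords of the $12$-gon creates eight faces covering $12+2\cdot 7=26$ corner slots, an excess of $2$ over a triangulation, so with a short sequence of chord exchanges (exactly as in the figures for Cases 8 and 10) the insertions can be arranged to leave one $5$-sided face and otherwise triangles, i.e.\ a type $(5)$ embedding. For $s=0$ I would simply record a rotation system for $K_{11}$ in the appendix, as was done for $K_{10}$ and $K_{13}$, unless it can be lifted by deleting vertices from an already-constructed triangulation.

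The hard part will be the additional-adjacency bookkeeping. One must exhibit a valid current graph whose generated rotation system has, uniformly in $s$, the prescribed local shape of row $0$ and of the rows at the endpoints of the four missing edges, so that Construction~\ref{construction-k3} applies and the subsequent chord exchanges compose to a single pentagon; since the standard constructions for $n\equiv 11\pmod{12}$ tend to bifurcate on the parity of $s$ and do not place their vortices conveniently in the log, a purpose-built family is probably needed, after which Rule R* (equivalently, KCL and the construction principles) must be checked for it, and the few small $s$ not covered by the generic graph must be verified to admit the same surgery. This verification is where essentially all the difficulty lies; the genus arithmetic and the chord-exchange combinatorics are routine once the right current graph is in hand.
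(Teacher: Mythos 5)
Your Euler arithmetic is right ($t=2$ for this residue class, a $4$-edge-deficient graph triangulates $S_{I(n)-1}$, and seven chords in the $12$-gon leave an excess of two sides), and the reduction of type $(4,4)$ to type $(5)$ via Proposition~\ref{prop-simple6} is correct. But the proposal has a genuine gap: its entire load-bearing step --- a valid current-graph family producing triangular embeddings of $K_{12s+11}-H$ for some four-edge graph $H$, with the vortices and the endpoints of $H$ positioned in the log so that Construction~\ref{construction-k3} plus chord exchanges closes everything into a single pentagon --- is never exhibited. You acknowledge that ``this verification is where essentially all the difficulty lies,'' but that verification \emph{is} the theorem; no such family is known in the literature (the natural index-1 construction with vortices of type (T1) over $\Z_{12s+7}$ deletes a $K_5$, i.e.\ ten edges, not four, and $K_{12s+11}-K_4$ fails the Euler count for triangulating any orientable surface). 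Without the object in hand, the one-handle surgery you describe has nothing to act on.

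The paper takes a different and fully worked route for $s\ge 2$: it starts from the Ringel--Youngs triangular embeddings of $K_{12s+11}-K_5$ (genus $I(n)-2$) and spends \emph{two} handles, not one. A preliminary round of local edge flips trades four existing edges for three of the missing ones and leaves a quadrilateral $[a,12s{+}4,6s{+}5,x]$; Construction~\ref{construction-k3} is then applied at vertex $0$ with the three vortices $a,b,c$, eight chords are placed in the resulting $12$-gon, and the second handle merges the face $[0,c,y]$ with the saved quadrilateral to restore the remaining six edges, after which a single chord exchange of $(0,x)$ merges the two leftover quadrilaterals into a pentagon. The small cases are also handled differently from what you suggest: $s=0$ uses Mayer's embedding of $K_{11}$, and $s=1$ requires a separate explicit embedding of $K_{23}$ (Table~\ref{tab-k23}) because the general current graphs only exist for $s\ge 2$ --- a case your plan folds into an unspecified ``few small $s$.'' Your one-handle strategy would be more economical if the required near-triangulations existed, but as written it is a proposal for a construction, not a proof.
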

\begin{proof}
The embedding of $K_{11}$ given by Mayer~\cite{Mayer-Orientables}, after deleting two extra edges, is nearly triangular.\footnote{The embedding given in Ringel~\cite[p.81]{Ringel-MapColor}, results from deleting the ``wrong'' edge of each doubled pair, leaving an embedding of type $(4,4)$.} The embedding of $K_{23}$ we give in Table~\ref{tab-k23} was also found starting from Mayer~\cite{Mayer-Orientables} (see also Ringel~\cite[p.85]{Ringel-MapColor}). Two sequences of chord exchanges, starting with $(8,22)$ and $(10,16)$, eventually ``collide'' at two edges incident with the same face, resulting in a 5-sided face. 

The general case $s \geq 2$ is proved in Theorem~\ref{thm-gen11}.
\end{proof}
\begin{corollary}
For $s\geq 0$, there exist embeddings of type $(5)$ and $(4,4)$ of $K_{12s+11}$. 
\end{corollary}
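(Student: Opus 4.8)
The final statement to prove is Theorem~\ref{thm-gen11}: for $s \geq 2$, there exists a nearly triangular minimum genus embedding of $K_{12s+11}$.

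\medskip

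The plan is to follow the same template used in Cases 8 and 10: start from a known family of current graphs producing a triangular embedding of $K_{12s+11}-K_3$ (such current graphs were used in the original Map Color Theorem proof for Case 11, see Ringel~\cite{Ringel-MapColor}), and then design a uniform additional adjacency step that leaves exactly one $5$-sided face. Since $t=2$ for this Case, nearly triangular means type $(5)$, and once type $(5)$ is in hand, Proposition~\ref{prop-simple6} immediately upgrades it to type $(4,4)$, which is what the corollary needs. So the entire content is the type $(5)$ construction.

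\medskip

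First I would fix a valid index-1 current graph over $\Z_{12s+9}$ (or whichever group the standard Case 11 construction uses) with three type-(T1) vortices $x$, $y$, $z$, ideally relabeling or lightly modifying it---as was done for Cases 1, 8, and 10---so that the current assignment is a ``smooth'' zigzag and the odd/even subfamilies merge into one. I would then read off the log of the single face, extract row $0$ and the handful of partial rows near the vortex letters via the additive rule, and locate three edges forming a short ``staircase'' in the rotation tables (rows of the form $\dots c\ b\ d\dots$, $\dots e\ d\ f\dots$, $\dots g\ f\ h\dots$ with $(g,h)$ a non-edge), exactly as in the Case 8 setup. Applying the sequence of edge flips $-(a,b)\pm(c,d)\pm(e,f)+(g,h)$ would create, at the cost of one edge, a vertex adjacent to all three of $x,y,z$. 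Then I would apply Construction~\ref{construction-k3} to that vertex and the triple $x,y,z$ to produce the $12$-sided face, and finish by adding back the deleted edges (the three incident to the Construction vertex plus the one sacrificed by the edge flips) as chords using one more handle, checking via the Heffter--Edmonds principle that the only surviving nontriangular face is a single pentagon.

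\medskip

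I expect the main obstacle to be the same as in the other hard cases: finding the right ``pre-modification'' of the standard current graph so that (i) the relative cyclic positions of $x$, $y$, $z$ in row $0$ are suitable for Construction~\ref{construction-k3}, and (ii) a short chain of edge flips actually terminates at a non-edge and at an edge incident with the future pentagon, all uniformly in $s$ and independent of the parity of $s$. This is a finite but delicate bookkeeping problem on the log, and verifying that the final face count is exactly one pentagon (and not, say, two quadrilaterals or a hexagon) requires carefully tracing the face boundaries through all the chord exchanges. The small cases $s=0$ ($K_{11}$) and $s=1$ ($K_{23}$) are already handled separately in the proof of the preceding theorem via Mayer's embeddings, so the current-graph argument only needs to cover $s \geq 2$; I would double check whether $s=2$ needs an \emph{ad hoc} current graph (as in Cases 1, 8, 10) or whether the general family reaches down that far.
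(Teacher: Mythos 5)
Your reduction of the corollary to a type $(5)$ construction plus Proposition~\ref{prop-simple6}, and your handling of $s=0,1$ via the separately given embeddings of $K_{11}$ and $K_{23}$, both match the paper. But the core of your plan for $s\geq 2$ rests on an object that does not exist: an orientable triangular embedding of $K_{12s+11}-K_3$. By Proposition~\ref{prop-trianglebound}, such an embedding would require $\binom{n}{2}-3-3n+6$ to be divisible by $6$, and for $n=12s+11$ this quantity is $\equiv 1 \pmod 6$ (e.g.\ it equals $25$ for $n=11$). This is exactly why Case 11 is not listed among the residues $n\equiv 1,6,9,10 \pmod{12}$ for which $K_n-K_3$ triangulations are known, and why the Ringel--Youngs regular step for Case 11 instead produces triangular embeddings of $K_{12s+11}-K_5$. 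So there is no ``known family of current graphs for $K_{12s+11}-K_3$'' to start from, and the single-handle scheme you describe (one edge flip to create a common neighbor, then Construction~\ref{construction-k3}, then chords) has no chance of adding the ten missing edges of a $K_5$: one handle only accommodates six extra edges.

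The paper's actual proof of Theorem~\ref{thm-gen11} is correspondingly heavier. Starting from the $K_{12s+11}-K_5$ triangulation, it first performs a batch of local edge flips (following Ringel--Youngs) that gain $(a,y)$, $(b,y)$, $(a,x)$ at the cost of four other edges and leave a quadrilateral face $[a,12s{+}4,6s{+}5,x]$; it then applies Construction~\ref{construction-k3} to vertex $0$ and the triple $a,b,c$ and fills the resulting $12$-gon with eight chords using the first handle; a \emph{second} handle then merges the newly created triangle $[0,c,y]$ with the saved quadrilateral to restore the remaining six edges; and a final chord exchange between the two leftover quadrilaterals produces the pentagon. Your instinct to imitate Cases 1 and 10 is understandable, but those are $t=3$ Cases where the deficient graph really is $K_n-K_3$; for Case 11 the additional adjacency problem is structurally different (two handles, a $K_5$ deficiency), and that difference is the whole difficulty of the proof.
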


\begin{theorem}
For $s \geq 2$, there exists a nearly triangular minimum genus embedding of $K_{12s+11}$.
\label{thm-gen11}
\end{theorem}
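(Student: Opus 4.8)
The plan is to mirror the strategy used for Cases~1 and~8 (Theorems~\ref{thm-case1new} and~\ref{thm-gen8}): exhibit a family of current graphs whose induced rotation systems triangulate a nearly complete graph on $12s+11$ (topological) vertices, then run an additional-adjacency argument built from Construction~\ref{construction-k3} and a short sequence of edge flips, and finally track the face distribution to see that exactly one pentagonal face survives. Since the relevant value here is $t=2$, a type $(5)$ embedding is the target, and by the minimality criterion following Proposition~\ref{prop-trianglebound} such an embedding is automatically of minimum genus; Lemma~\ref{lem-5simp} and Proposition~\ref{prop-simple6} then also deliver the type $(4,4)$ embedding needed for the corollary.

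First I would set up the base triangulations. For the generic range $s\ge 3$ I expect to introduce a new ``smooth zigzag'' family of current graphs in the spirit of Figures~\ref{fig-c1s3plain} and~\ref{fig-newgraph}, with a small number of vortices; as in those cases the verification amounts to checking the construction principles (C1)--(C6), equivalently Rule R*, and noting that a single rung pattern works for all $s$ of both parities. The small case $s=2$ (the graph $K_{35}$) I would treat separately, almost certainly by relabelling one of the classical current graphs of Ringel or Ringel--Youngs for that order and checking it directly. From each current graph I would then read off, via the additive rule, only the handful of rows that enter the argument: the row exhibiting the cyclic positions of the vortex letters, and the two or three further rows touched by the edge flips. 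As in Theorem~\ref{thm-gen10}, the crucial feature to engineer is that the currents entering the vortices are chosen so that a fixed, $s$-independent block of chord exchanges behaves identically for every $s$.

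Next comes the additional-adjacency step proper. A preliminary sequence of edge flips of the shape $-(a,b)\pm(c,d)\pm(e,f)+(g,h)$ trades one existing edge for a previously missing one, so as to arrange some vertex $v$ adjacent to all three vortex vertices --- the hypothesis of Construction~\ref{construction-k3}. Applying that construction to $v$; $x,y,z$ raises the genus by one (to $I(12s+11)$) and produces the explicit $12$-sided face given by the proposition following Construction~\ref{construction-k3}. If two of the three vortices are the halves of a type-(T2) vortex, I would, as in the proof of Theorem~\ref{thm-gen8}, add the edge between them inside the $12$-gon and locally contract it, breaking the $12$-gon into (say) a quadrilateral and an octagon. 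Into the resulting large face or faces I would then insert, as non-crossing chords, every remaining missing edge --- the three edges deleted by Construction~\ref{construction-k3}, the edges among the vortex vertices, and the edge sacrificed by the preliminary flip. The Euler count (exactly two surplus edges) forces that, however these chords are placed so as to triangulate as much as possible, a single five-sided face is left, so the embedding is nearly triangular and hence of minimum genus.

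The hard part will be the usual one for these arguments: designing the current-graph family so that the $12$-gon, once all of the missing chords are returned to it, can actually be triangulated down to exactly one pentagon by an explicit, parity-uniform chain of exchanges, rather than leaving a hexagon or a worse configuration. I expect the mechanism to be the same ``collision'' phenomenon mentioned above for $K_{23}$: two chord-exchange chains started from opposite ends of the large face must meet at a common face to pinch off the lone pentagon, and confirming that they meet on the nose --- for all $s\ge 3$ simultaneously, together with a hand check of the $s=2$ base case --- is the real content.
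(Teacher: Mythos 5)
There is a genuine gap, and it is structural rather than a matter of detail: your plan is built around a regular step that produces three mutually nonadjacent vertices and a single application of Construction~\ref{construction-k3}, i.e.\ one extra handle. For $n\equiv 11\pmod{12}$ this cannot work. The residue computation after Proposition~\ref{prop-trianglebound} shows that $K_n-H$ admits a triangular embedding only when $|E(H)|\equiv 4\pmod 6$, so in particular there is \emph{no} triangular embedding of $K_{12s+11}-K_3$, and the classical regular step for this Case (Ringel and Youngs~\cite{RingelYoungs-Case11}) instead yields a triangular embedding of $K_{12s+11}-K_5$: five pairwise nonadjacent lettered vertices and ten missing edges, sitting in genus $I(12s+11)-2$. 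Two handles are therefore required, and your Euler count (``exactly two surplus edges'' forcing a lone pentagon after one handle) is carried out on a starting configuration that does not exist. The Case~8 device you invoke --- contracting the edge between the two halves of a type-(T2) vortex inside the $12$-gon --- also has no analogue here, since the Case~11 current graphs have five genuine vortices rather than one split vortex.

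The paper's proof is correspondingly different in shape. It keeps the Ringel--Youngs current graphs (two families, by parity of $s$, but with identical logs near the vortices), first performs the local edge trades of the original proof to gain $(a,y)$, $(b,y)$, $(a,x)$ at the cost of four other edges and to create a quadrilateral face $[a,12s{+}4,6s{+}5,x]$, then spends the \emph{first} handle via Construction~\ref{construction-k3} on vertex $0$ and the three vortices $a,b,c$, packing eight chords into the resulting $12$-gon. The \emph{second} handle merges the new face $[0,c,y]$ with the saved quadrilateral to restore the remaining six edges, after which a single chord exchange $\pm(0,x)$ converts the two leftover quadrilaterals into one pentagon. So the ``collision'' you anticipate does occur, but only after a two-handle bookkeeping that your proposal never sets up. To repair your argument you would either have to produce triangular embeddings of $K_{12s+11}-H$ with $|E(H)|=4$ in genus $I(12s+11)-1$ (not currently known for this Case) or restructure the additional adjacency step around $K_{12s+11}-K_5$ and two handles as the paper does.
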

\begin{proof}

Ringel and Youngs~\cite{RingelYoungs-Case11} found current graphs with the geometry of Figure~\ref{fig-gen11} for $s \geq 2$. The current graphs produce triangular embeddings of $K_{12s+11}-K_5$, so the goal is to add in the edges between the lettered vertices using two handles. Near the vortices, the logs of both current graphs are
{
\setlength{\arraycolsep}{5pt}
$$\begin{array}{rrrrrrrrrrrrrrrrrrrrrrrrrrrrrrrr}
0. & x & 6s{+}5 & 12s{+}4 & a & 12s{+}5 & y & 1 & b & 12s{+}2 & \dots & 4 & c & 2 & \dots
\end{array}$$
}
\begin{figure}[!ht]
\centering
\includegraphics[scale=0.9]{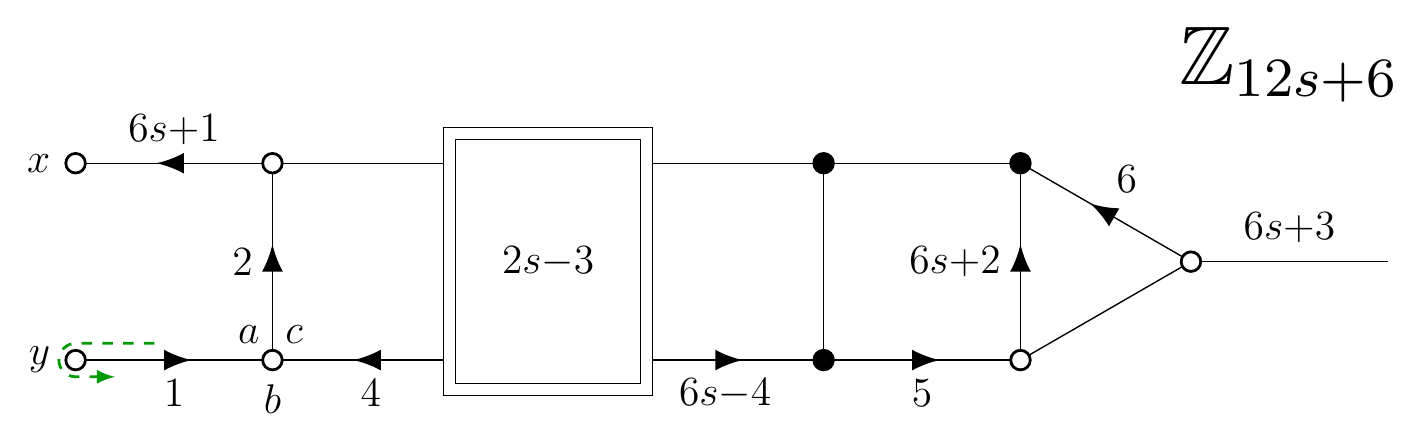}
\includegraphics[scale=0.9]{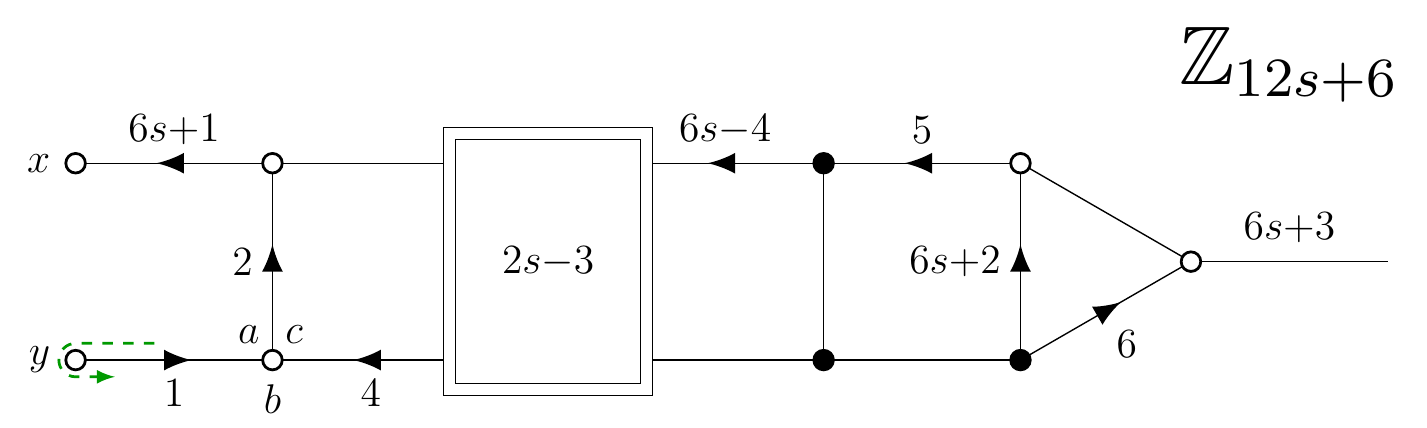}
\caption{The geometry of two general current graphs for Case 11, depending on the parity of $s$.}
\label{fig-gen11}
\end{figure}
Before adding handles, several local edge additions and deletions are made to the triangular embedding of $K_{12s+11}-K_5$. We omit the exact details of these modifications, which are identical to those in Ringel and Youngs~\cite{RingelYoungs-Case11} (see also Ringel~\cite[p.100]{Ringel-MapColor}). In summary, the resulting embedding now has the edges $(a,y)$, $(b,y)$, and $(a,x)$ at the expense of $(0, 12s{+}4)$, $(0, 6s{+}5)$, $(c,12s{+}4)$, and $(b,4)$. The embedding also has a single nontriangular face $[a, 12s{+}4, 6s{+}5, x]$, as seen in Figure~\ref{fig-mod11}. 

\begin{figure}[!ht]
\centering
\includegraphics{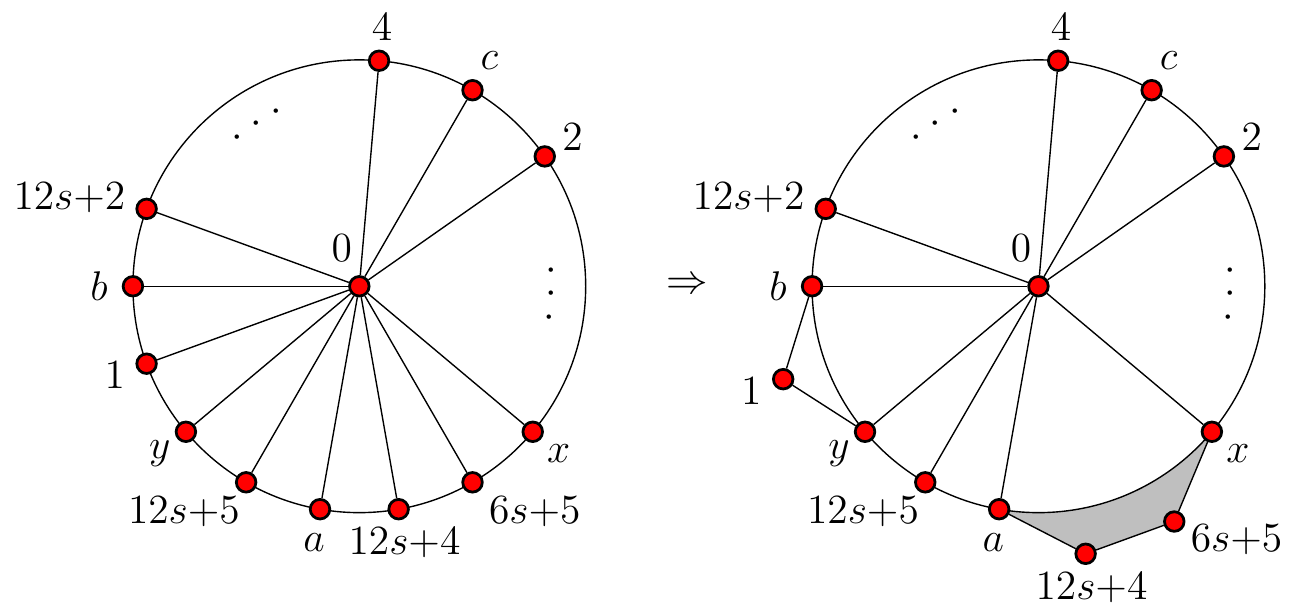}
\caption{Modifications to the rotation at vertex 0. The shaded quadrilateral face on the right will be used again later on.}
\label{fig-mod11}
\end{figure}

Applying Construction~\ref{construction-k3} to vertex $0$ and nonadjacent vertices $a, b, c$, we obtain the 12-sided face
$$[0, 12s{+}5, a, x, 0, 12s{+}2, b, y, 0, 2, c, 4]$$
while losing the edges $(0,a)$, $(0,b)$ and $(0,c)$. In this face, we add the chords $(0,a)$, $(0,b)$, $(0,c)$, $(a,b)$, $(b,c)$, $(c,y)$, $(b,4)$, $(b,x)$ as in Figure~\ref{fig-big11}(a). The handle creates the face $[0, c, y]$, and from the previous modifications, there is the quadrilateral $[x, a, 12s{+}4, 6s{+}5]$. Using another handle, we can merge the two faces to add the edges $(a, c)$, $(c, x)$, $(x, y)$, $(0, 6s{+}5)$, $(0, 12s{+}4)$, and $(c, 12s{+}4)$ as in Figure~\ref{fig-big11}(b). 

\begin{figure}[!ht]
    \centering
    \begin{subfigure}[b]{0.35\textwidth}
    \centering
        \includegraphics{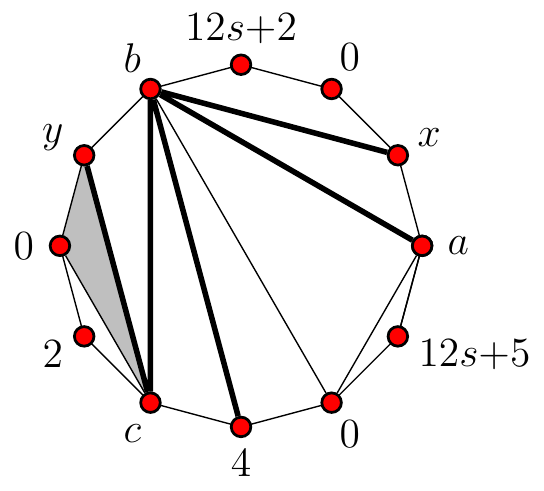}
        \caption{}
        \label{subfig-11a}
    \end{subfigure}
    \begin{subfigure}[b]{0.35\textwidth}
    \centering
        \includegraphics{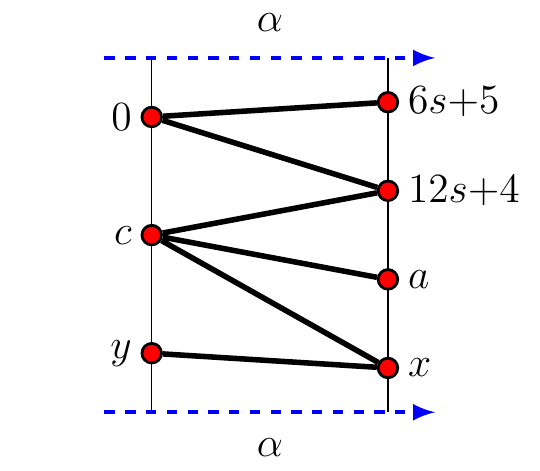}
        \caption{}
        \label{subfig-11b}
    \end{subfigure}
\caption{The gained edges from two handles. Note that the second handle in part (b) makes use of the shaded faces from Figure~\ref{fig-mod11} and part (a).}
\label{fig-big11}
\end{figure}

Now, all the missing edges have been added and we are left with an embedding of $K_{12s+11}$ with two quadrilateral faces
$$[0, 6s{+}5, x, y] \textrm{ and } [0, 12s{+}2, b, x].$$
Exchanging the chord $(0,x)$ yields an embedding of type $(5)$, completing the construction. 
\end{proof}

\section{Nonorientable embeddings}\label{sec-nonorient}

Let $N_k$ denote the nonorientable surface of genus $k$, a sphere with $k$ crosscaps, and let the \emph{minimum nonorientable genus} $\overline{\gamma}(G)$ be the genus of the smallest nonorientable surface that $G$ embeds in.\footnote{Formally, we include the sphere, which is orientable, as the nonorientable surface of genus $0$.} Analogously, we have the \emph{nonorientable} Euler polyhedral equation 

$$|V(G)| - |E(G)| + |F(G)| = 2-\overline{g}(N)$$

and the \emph{nonorientable} Map Color Theorem 

$$\overline{\gamma}(K_n) = \left\lceil \frac{(n-3)(n-4)}{6} \right\rceil, n \geq 3, n \neq 7.$$

The discrepancies with the orientable versions are due to the fact that one handle in a nonorientable surface is homeomorphic to two crosscaps. The lone exception $n = 7$ is due to Franklin~\cite{Franklin-SixColor}, who showed that $K_7$ cannot embed in $N_2$, the Klein bottle. 

Because crosscaps are ``half of a handle,'' we can obtain nonorientable triangular embeddings for some complete graphs that cannot triangulate an orientable surface. For $n \equiv 1, 6, 9, 10 \pmod{12}$, we showed that there were embeddings of type $(6)$ of $K_n$, but these graphs actually have nonorientable triangular embeddings (see Ringel~\cite{Ringel-MapColor}). Similarly, we used a handle to add the missing edge to a triangular embedding of $K_n - K_2$ for $n \equiv 2,5 \pmod{12}$, but actually a crosscap suffices. We summarize the expected types of embeddings we need to find:

\begin{itemize}
\item For $n \equiv 2, 5, 8, 11 \pmod{12}$, types $(5)$ and $(4,4)$.
\item For $n = 7$, types $(6)$, $(5, 4)$, and $(4,4)$. 
\end{itemize}

The situation for nonorientable genus embeddings is, like in the proof of the nonorientable Map Color Theorem, significantly simpler than its orientable counterpart. Instead of formally describing graph embeddings in nonorientable surfaces, we invoke the nonorientable version of Rule R* for the regular parts, and describe how to add crosscaps locally for the additional adjacency parts. The proof makes use of nonorientable current graphs, known as cascades, for constructing rotation systems on surfaces. We do not go into their definition because we will only need to focus on the additional adjacency part. Our contribution here is a careful observation of the embeddings produced by the original proof (see, e.g., Ringel~\cite{Ringel-MapColor}) in primal form.

\begin{definition}
A rotation system satisfies \emph{Rule R} if for all edges $(i,k)$, if row $i$ is of the form
$i. \, \dots \, j \, k \, l \dots,$
then row $k$ is either of the form $k. \, \dots l \, i \, j \dots$ or $k. \, \dots j \, i \, l \dots$
\end{definition}

\begin{theorem}[see Ringel~\cite{Ringel-MapColor}, Theorem 5.2]
If a rotation system of $G$ satisfies Rule $R$, then there exists a triangular embedding of $G$ on a (possibly nonorientable) surface.
\end{theorem}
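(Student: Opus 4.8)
The plan is to turn the combinatorial data into a genuine embedding and then check directly that every face is a triangle. First I would promote the given rotation system to a rotation system \emph{with signature} $\sigma\colon E(G)\to\{+1,-1\}$: for an edge $(i,k)$, write row $i$ as $i.\ \dots\, j\ k\ l\, \dots$; by Rule R, row $k$ then reads $k.\ \dots\, l\ i\ j\, \dots$ or $k.\ \dots\, j\ i\ l\, \dots$, and I set $\sigma(i,k)=+1$ in the first case and $\sigma(i,k)=-1$ in the second. A small check is needed that this is well defined: the two alternatives are mutually exclusive unless the flanking neighbours coincide, which forces $\deg i = 2$ --- a degenerate situation one can handle by hand, and which does not occur in our applications --- and symmetry of $\sigma$ in its two arguments follows by unwinding Rule R. The pair (rotation system, $\sigma$) is then an embedding scheme in the sense of Gross and Tucker~\cite{GrossTucker}, so it determines a cellular embedding of $G$ in a surface $S$, which is orientable exactly when every cycle of $G$ has signature-product $+1$ and is (possibly) nonorientable otherwise.

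Next I would trace the faces of this embedding with the signed tracing rule, in which a partial face-walk carries a local-orientation sign that flips whenever the walk crosses an edge with $\sigma=-1$, and at each vertex the walk exits along the rotation-successor or rotation-predecessor of the entering edge according to the current sign. Starting a walk at the corner of vertex $i$ that enters along the edge to $j$ (with row $i$ reading $\dots\, j\ k\ l\, \dots$), the rule sends the walk out along the edge to $k$; at $k$, Rule R fixes the position of $i$ in row $k$ --- flanked by $j$ on one side and $l$ on the other --- and fixes $\sigma(i,k)$ so that the walk proceeds next to $l$; one more step at $l$, where Rule R applies again, forces the next vertex to be $i$, and a bookkeeping check on the accumulated sign shows the walk has returned to its starting corner. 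Hence every face is a closed walk $[i,k,l]$ of length three; since $G$ is simple, such a face has three distinct vertices and so is an honest triangle. This closure computation is the heart of the matter: Rule R is exactly the local condition ensuring that two polygons sharing an edge can close up into triangles under \emph{either} of the two ways their local orientations may meet along that edge, which is why it relaxes the single alternative of Rule R*.

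Since the embedding scheme is cellular by construction and all of its faces are triangles, it is by definition a triangular embedding of $G$ on the surface $S$, which may be nonorientable --- exactly the assertion of the theorem. (If one also wants the genus, the nonorientable Euler polyhedral equation pins it down from $|V|$, $|E|$, and $|F| = \tfrac{2}{3}|E|$.)

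I expect the main obstacle to be the middle step: getting the signed face-tracing bookkeeping exactly right, especially closure in the orientation-reversing case $\sigma(i,k)=-1$, where one must follow how the local-orientation sign toggles across the three shared edges of the triangle and verify it returns consistently. Establishing well-definedness of $\sigma$ --- dispatching the low-degree degenerate configurations --- is a minor but genuine nuisance.
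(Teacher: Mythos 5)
The paper does not prove this statement at all---it is imported verbatim from Ringel's book (Theorem 5.2 there)---so there is no in-paper argument to compare against. Your strategy is the standard one and is essentially Ringel's: read off an edge signature $\sigma$ from which of the two alternatives of Rule R holds at each edge, observe that the rotation system together with $\sigma$ is an embedding scheme and hence determines a cellular embedding in a (possibly nonorientable) surface, and then verify that every face-tracing walk closes after three steps. The ingredients you single out (well-definedness and symmetry of $\sigma$ away from degree-$2$ vertices, the closure computation, the Euler count at the end) are exactly the right ones, so I would accept the approach.

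Two points on the execution. First, your sample trace has an off-by-one: the walk you start occupies the corner of $i$ between the edges to $j$ and to $k$, but the triangle you then close up is $[i,k,l]$, whose corner at $i$ lies between the edges to $k$ and to $l$; as written the walk enters $i$ from $j$ but returns to $i$ from $l$, so it does not land back on its starting corner. This is only a relabeling error (start at the corner $(k,i,l)$, or else close the triangle $\{i,j,k\}$), but since you identify the sign bookkeeping as the delicate point, note that there is a convention-free way to finish that avoids signed tracing entirely: Rule R applied to the edges $(i,k)$ and $(i,l)$ shows that for each corner $\dots k\ l\dots$ of row $i$, the entries $l$ and $i$ are consecutive in row $k$ and $i$ and $k$ are consecutive in row $l$; these three corners mutually determine one another, every corner lies in exactly one such triple and every edge in exactly two, and gluing the resulting triangles according to the rotations produces a closed surface (the link of each vertex is the single cycle given by its rotation) triangulated by $G$. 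Second, your argument tacitly needs the two neighbours flanking $k$ in row $i$ to actually be neighbours of $k$, so that $\{i,k,l\}$ is a triangle of $G$; this is not an extra hypothesis but another consequence of Rule R, and it deserves a sentence.
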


Note the caveat in the above statement that the surface might be orientable. However, in the additional adjacency part, we add crosscaps, which always make the resulting surface nonorientable. To add a crosscap to a surface, we cut out a disk and identify opposite points of the resulting boundary. We provide nonorientable analogues of Construction~\ref{construction-k3}, using one and two crosscaps. 

\begin{proposition}
Suppose there exists a triangular embedding of $K_n-K_2$ in a (possibly nonorientable) surface. Then there exist nonorientable embeddings of $K_n$ of type $(5)$ and $(4,4)$.
\end{proposition}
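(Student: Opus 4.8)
The plan is to mimic the orientable strategy used for Cases 2 and 5, but with a crosscap playing the role of the handle. Starting from a triangular embedding of $K_n-K_2$ with the two nonadjacent vertices called $x$ and $y$, I would locate the two triangular faces incident with $x$ and $y$ respectively. Since the embedding is triangular, write these faces as $[x,a,b]$ and $[y,c,d]$. The goal is to add the single missing edge $(x,y)$ using a crosscap rather than a handle, and to observe what nontriangular face results.

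The key step is the local picture of the crosscap addition: excise a disk meeting the interiors of the faces $[x,a,b]$ and $[y,c,d]$ (or a single face containing both $x$ and $y$ on its boundary, if that is more convenient), and identify antipodal points of the resulting circular boundary. Running the edge $(x,y)$ through this crosscap merges the two triangles, but because the identification reverses orientation, the cyclic order in which the boundary vertices get stitched together differs from the handle case in Figure~\ref{fig-8handle}. I would trace the boundary of the new face carefully; the outcome is a single nontriangular face, and a count via the nonorientable Euler polyhedral equation confirms the genus went up by exactly one crosscap, matching $\overline{\gamma}(K_n)$ for $n \equiv 2,5,8,11 \pmod{12}$. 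The resulting face should be $5$-sided (as opposed to the $8$-sided face in the orientable Cases 2 and 5), because a crosscap adds only one edge while destroying fewer faces; so the embedding is immediately of type $(5)$. One then applies Proposition~\ref{prop-simple6}: since the $5$-sided face is simple by Lemma~\ref{lem-5simp}, a chord exchange turns the type $(5)$ embedding into one of type $(4,4)$.

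A point that needs care, paralleling the remark after Proposition~\ref{prop-knk2}, is whether the vertices $a,b,c,d$ on the two faces can be chosen distinct so that the $5$-sided face does not accidentally repeat an edge or vertex; but since we only need a simple $5$-sided face and Lemma~\ref{lem-5simp} guarantees any $5$-sided face in a simple graph of minimum degree $\geq 2$ is automatically simple, this is not actually an obstruction here — the lemma does the work for us regardless of how the faces sit. The main obstacle I anticipate is purely the bookkeeping of the nonorientable local surgery: getting the cyclic rotation right after the antipodal identification so that the claimed $5$-sided face is correct, and confirming via the nonorientable Euler formula that no extra crosscap sneaks in. Once the type $(5)$ embedding is in hand, the passage to type $(4,4)$ is exactly Proposition~\ref{prop-simple6}, applied verbatim since that proposition's proof only used Lemma~\ref{lem-5simp} and a chord exchange, neither of which cares about orientability.
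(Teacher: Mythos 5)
There is a genuine gap at the heart of your construction: the crosscap surgery you describe is not a legal operation. A crosscap is added by excising a disk from the \emph{interior of a single face} and identifying antipodal boundary points; a disk ``meeting the interiors of the faces $[x,a,b]$ and $[y,c,d]$'' would have to cut through the edges separating those faces, which destroys the embedding. Your fallback --- placing the crosscap in ``a single face containing both $x$ and $y$ on its boundary'' --- is also unavailable, because in a triangular embedding of $K_n-K_2$ every face is a triangle, and no triangle can contain both $x$ and $y$ (that would require the edge $(x,y)$). Moreover, even granting some face containing both, the Euler count works against your conclusion: one crosscap plus one new edge leaves the number of faces unchanged, so the edge $(x,y)$ run through a crosscap inside an $m$-sided face does not split that face but turns it into an $(m+2)$-gon; to end with a $5$-gon you would need to start from a $3$-gon containing both $x$ and $y$, which does not exist. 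So the claimed $5$-sided face does not materialize from the surgery as you set it up.

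The paper's proof resolves exactly this obstruction by first \emph{modifying} the embedding so that $x$ and $y$ come to lie on a common face: pick a vertex $0$ adjacent to both, delete the edges $(0,x)$ and $(0,y)$, and pass some of the remaining edges at $0$ through a crosscap placed near $0$. This produces a single $8$-sided face incident with $x$, $y$, and two instances of $0$. One then adds the chord $(x,y)$ and re-inserts $(0,x)$ and $(0,y)$ as chords of this octagon; the $8$-gon cut by three chords yields faces of lengths $3+3+3+5$ or $3+3+4+4$ depending on the placement, giving both type $(5)$ and type $(4,4)$ directly (no appeal to Proposition~\ref{prop-simple6} is needed, and note that Lemma~\ref{lem-5simp} and Proposition~\ref{prop-simple6} are stated for orientable embeddings in any case). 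Your instinct to imitate the Case 2/5 handle argument is reasonable, but the essential extra idea --- sacrificing and rerouting edges at a common neighbor so that one crosscap suffices --- is missing from your proposal.
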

\begin{proof}
Let the two nonadjacent vertices be $x$ and $y$, and let $0$ be a vertex adjacent to both. As seen in Figure~\ref{fig-crosscap-k2}, by deleting the edges $(0,x)$ and $(0,y)$ and passing some of the other edges incident with vertex $0$ through a crosscap, we obtain a 8-sided face incident with $x$, $y$, and two instances of $0$. After adding the chord $(x,y)$, there are a few choices of adding back the removed edges $(0,x)$ and $(0,y)$. Depending on the choice, we get an embedding of type $(5)$ or of type $(4,4)$. 

\begin{figure}
\centering
\includegraphics{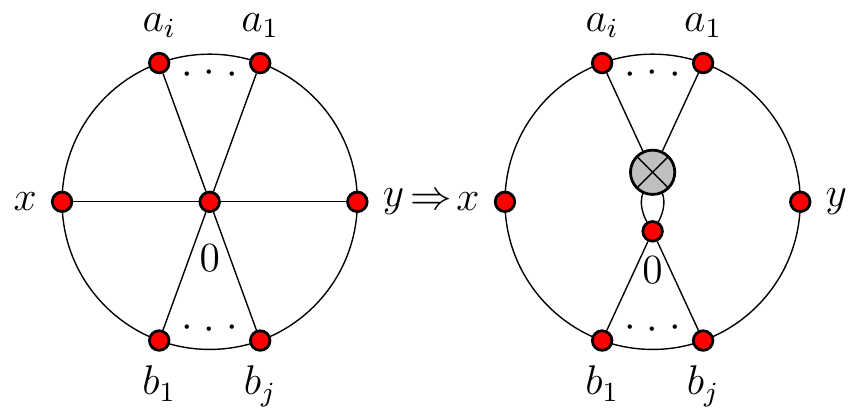}
\quad
\includegraphics{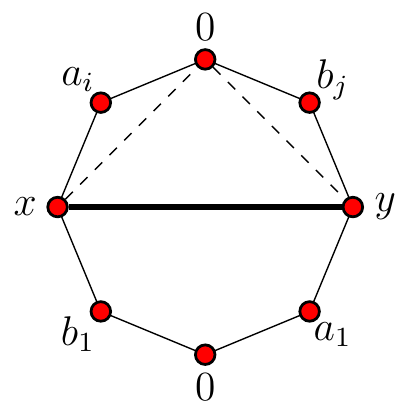}
\caption{One crosscap can be used to add an edge between nonadjacent vertices. The missing edges can be added back into the resulting $8$-sided face. In this example, the embedding is nearly triangular.}
\label{fig-crosscap-k2}
\end{figure}
\end{proof}

\begin{theorem}[see Ringel~{\cite[\S8.3]{Ringel-MapColor}}]
There exist triangular embeddings of $K_n-K_2$ for $n \equiv 5, 11\pmod{12}$.\footnote{Here, we could have also included the orientable embeddings of $K_{12s+2}-K_2$ referenced in Theorem~\ref{thm-case2} and the triangular embeddings of $K_{12s+8}-K_2$ of Korzhik~\cite{Korzhik-Case8} for $s \geq 1$.} 
\end{theorem}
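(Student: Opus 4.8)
The plan is to observe that the two residue classes $n\equiv 5,11\pmod{12}$ together form exactly $n\equiv 5\pmod 6$, so it suffices to produce, for each $t\ge 0$, a triangular embedding of $K_n-K_2$ with $n=6t+5$, realizing the two nonadjacent vertices over the group $\Z_{n-2}=\Z_{6t+3}$. For the subcase $n\equiv 5\pmod{12}$ there is nothing new to prove: the orientable triangular embedding of $K_{12s+5}-K_2$ of Youngs~\cite{Youngs-3569} recorded above (see also Ringel~\cite[\S9.2]{Ringel-MapColor}) is \emph{a fortiori} a triangular embedding in a possibly nonorientable surface. A one-line Euler-characteristic-and-parity count --- for $K_{12s+11}-K_2$ one has $|E|\equiv 0$ while $3|V|\equiv 3\pmod 6$, which a simple graph triangulating an orientable surface cannot satisfy --- shows that for $n\equiv 11\pmod{12}$ a genuinely nonorientable construction is forced.

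For that I would build a cascade (a nonorientable current graph) over $\Gamma=\Z_{6t+3}$ whose two degree-$1$ vertices are vortices of type (T1), i.e.\ each has excess generating $\Z_{6t+3}$. In the derived rotation system each such vortex becomes a vertex adjacent to all $6t+3$ group-labelled vertices and to nothing else (the two vortex marks are never adjacent in the single log, so no edge between them is created), and therefore the derived graph is precisely $K_n-K_2$. An edge/end count fixes the skeleton: the $3t+1$ current pairs $\{\gamma,-\gamma\}$ exhausting $\Z_{6t+3}\setminus\{0\}$ give $3t+1$ edges, so with two univalent vortices there are exactly $2t$ trivalent vertices, each obliged to satisfy Kirchhoff's current law. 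I would draw the trivalent part as a ladder/zigzag in the same spirit as the current graphs for Cases~1, 8 and 10 in this paper, so that a single picture --- with one or two small values of $t$ checked by hand --- covers all $t$; a smooth zigzag of currents makes KCL automatic at the interior rungs, leaving only the two end rungs and the two vortex excesses to be verified. Given such a cascade, the nonorientable additive rule produces a rotation system whose triangularity is certified by Rule~R, the nonorientable analogue of Rule~R*; since the underlying embedding carries an orientation-reversing closed walk, the derived surface is nonorientable, in agreement with the parity obstruction above.

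The crux, as always with current graphs, is construction principle (C2): one must trace the face boundary of the cascade through the entire ladder and confirm that it closes up into exactly one face, \emph{simultaneously} with KCL at the trivalent vertices and the order condition at the two vortices. Writing down one explicit zigzag current assignment that meets all of these at once --- ideally in a single family covering both parities of $t$ rather than splitting into two subfamilies --- is the step that requires care; once a candidate is on the page, the verification is mechanical. I note that Ringel~\cite[\S8.3]{Ringel-MapColor} already carries out (a version of) this construction, so in the paper one may simply invoke it; the sketch above is how one would reconstruct the argument from scratch.
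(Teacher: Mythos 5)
Your proposal is correct and takes essentially the same route as the paper: the statement is given there as a cited result of Ringel~(\S 8.3), whose construction is precisely the cascade over $\Z_{n-2}$ with two type-(T1) vortices that you describe, and your observation that the $n\equiv 5\pmod{12}$ subcase already follows from the orientable embedding of $K_{12s+5}-K_2$ quoted earlier is consistent with the spirit of the footnote. The only caveat is that, as you acknowledge, your sketch defers the one nontrivial step --- exhibiting an explicit current assignment satisfying KCL, the vortex order conditions, and the one-face condition (C2) simultaneously --- to Ringel, which is exactly what the paper does as well.
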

\begin{corollary}
For $s \geq 0$, there exist nonorientable embeddings of type $(5)$ and $(4,4)$ of $K_{12s+5}$ and $K_{12s+11}$.
\end{corollary}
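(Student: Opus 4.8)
The plan is to obtain this corollary by feeding the triangular embeddings supplied by the theorem just quoted into the preceding crosscap proposition. First I would record the arithmetic observation that the numbers $n = 12s+5$ and $n = 12s+11$ with $s \geq 0$ are precisely the positive integers lying in the residue classes $5$ and $11$ modulo $12$. Hence, for each such $n$, Ringel~\cite[\S8.3]{Ringel-MapColor} provides a triangular embedding of $K_n - K_2$ in some, possibly nonorientable, surface. Note that, unlike the orientable situation, there is no exceptional small case to exclude: the hypothesis is available for every $s \geq 0$.

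Next I would verify that the hypothesis of the preceding proposition is met and then invoke it. Writing $x$ and $y$ for the two nonadjacent vertices of $K_n - K_2$, the fact that $n \geq 5$ means there are at least three further vertices, each adjacent to both $x$ and $y$; choosing any one of them as the vertex $0$, we may apply the proposition's construction (Figure~\ref{fig-crosscap-k2}): delete $(0,x)$ and $(0,y)$, pass the appropriate edges incident with $0$ through a single crosscap so that the two triangles at $0$ merge into an $8$-sided face carrying $x$, $y$, and two copies of $0$, add the chord $(x,y)$, and finally reinsert $(0,x)$ and $(0,y)$. According to which of the available slots the two edges are returned to, this yields a nonorientable embedding of $K_n$ of type $(5)$ or of type $(4,4)$, which is exactly what is claimed.

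Although the statement only asserts existence, I would add the short remark that these embeddings are automatically of minimum nonorientable genus: a type $(5)$ or type $(4,4)$ embedding of a simple graph has exactly two more edges than a triangulation with the same vertex set, and since each crosscap accounts for at most three extra edges — the nonorientable counterpart of the edge count in Proposition~\ref{prop-trianglebound} — the edge count forbids any embedding of $K_n$ in a nonorientable surface of smaller genus. This places the corollary alongside the orientable results of the earlier sections.

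There is essentially no serious obstacle here; the corollary is a routine specialization of the two results immediately preceding it. The only points that deserve a moment's care are confirming that the two progressions $12s+5$ and $12s+11$ sweep out exactly the classes $5, 11 \pmod{12}$, and checking that the triangular embedding of $K_n - K_2$ really does contain a vertex adjacent to both nonadjacent vertices, so that the crosscap proposition genuinely applies.
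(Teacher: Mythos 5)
Your proposal is correct and is exactly the paper's (implicit) argument: the corollary follows immediately by feeding the triangular embeddings of $K_n-K_2$ for $n \equiv 5, 11 \pmod{12}$ into the preceding crosscap proposition, and your added checks (that the progressions cover those residue classes, that a vertex adjacent to both $x$ and $y$ exists, and the genus-minimality remark) are all sound but routine.
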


For Case 8, a current graph similar to Figure~\ref{fig-cur-case8} is used, where there is one vortex $x$ of type (T1), and one vortex $y$ of type (T2). The flexibility of nonorientability makes the additional adjacency problem significantly simpler.

\begin{theorem}
For $s \geq 0$, there exist nonorientable embeddings of type $(5)$ and $(4,4$) of $K_{12s+8}$.
\end{theorem}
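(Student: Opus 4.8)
The plan is to follow the same two-step template as the orientable cases. For the regular step I would invoke the nonorientable analogue of the current graph of Figure~\ref{fig-cur-case8} --- a cascade carrying one type-(T1) vortex $x$ and one type-(T2) vortex $y$ --- and apply the nonorientable Rule R to obtain a nonorientable triangular embedding of the $(12s{+}9)$-vertex graph with vertices $0,1,\dots,12s{+}5,x,y_0,y_1$, in which the numbered vertices are mutually adjacent, $x$ is adjacent to every numbered vertex, $y_0$ (resp.\ $y_1$) is adjacent to every even (resp.\ odd) numbered vertex, and $x,y_0,y_1$ are pairwise nonadjacent. This is exactly the graph produced in the regular step of Theorem~\ref{thm-gen8}, so the partial rows one needs are of the same shape. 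For the smallest orders it is cleanest to record an explicit rotation system, as was done for $s=1$ in Theorem~\ref{thm-gen8}; the case $s=0$, which yields $K_8$, must be handled separately in any event, since no earlier result produces a nonorientable nearly triangular embedding of $K_8$ (the obstruction of Corollary~\ref{corollary-k8} concerns $S_2$ only and does not rule out $N_4$).

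For the additional adjacency step the goal is to install the three missing edges $(x,y_0)$, $(x,y_1)$, $(y_0,y_1)$ and then contract $(y_0,y_1)$ to a single vertex $y$, producing $K_{12s+8}$. Contraction does not change the surface and each crosscap raises the nonorientable genus by exactly one, so precisely two crosscaps are available, and the nonorientable Euler equation then places a type-$(5)$ embedding in $N_{\overline{\gamma}(K_{12s+8})}$, as required. The orientable solution in Theorem~\ref{thm-gen8} spent one handle on Construction~\ref{construction-k3} after a preparatory sequence of edge flips creating a numbered vertex adjacent to all of $x,y_0,y_1$; the expected simplification here is that, with crosscaps available, that preprocessing is unnecessary. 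Concretely, I would first add $(y_0,y_1)$ using a single crosscap joining a triangle incident with $y_0$ to a triangle incident with $y_1$, so that (as in Figures~\ref{fig-crosscap-k2} and~\ref{fig-splitcontract}) the new edge is traversed twice on the resulting octagonal face; contracting that edge leaves a hexagonal face and a nonorientable type-$(6)$ embedding of $K_{12s+8}-K_2$, the one remaining missing edge being $(x,y)$. I would then add $(x,y)$ with the second crosscap, joining a triangle at $x$ to that hexagonal face, and finish with a short sequence of chord exchanges collapsing the resulting large face down to a single pentagon, giving type $(5)$.

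Once a type-$(5)$ embedding is in hand, type $(4,4)$ is immediate: the portion of Lemma~\ref{lem-5simp} asserting that a $5$-sided face of a simple graph of minimum degree $2$ is simple uses no orientation argument, so the proof of Proposition~\ref{prop-simple6} transfers verbatim, and exchanging one chord into the pentagon splits it into two quadrilaterals. The main obstacle I anticipate lies entirely in the additional adjacency step: one must choose the two crosscap attachments and the chord exchanges so that the final face distribution consists of exactly one pentagon with no leftover larger face, and arrange this uniformly in $s$. The dangerous instances are the small orders, above all $s=0$ ($K_8$) and $s=1$ ($K_{20}$), where there are too few vertices to make the generic ``pick enough distinct neighbors'' choices; I would expect these to require an explicit rotation system together with an explicit crosscap routing, exactly as for the $s=1$ base case of Theorem~\ref{thm-gen8}.
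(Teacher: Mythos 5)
Your regular step matches the paper's (the cascade of Ringel's Fig.~8.27 producing the triangular embedding of the split graph on $0,\dots,12s{+}5,x,y_0,y_1$), and your observation that the simplicity of a $5$-sided face and the chord exchange of Proposition~\ref{prop-simple6} carry over to the nonorientable setting is sound. But the additional adjacency step as you describe it has a genuine gap. Your first move is to ``add $(y_0,y_1)$ using a single crosscap joining a triangle incident with $y_0$ to a triangle incident with $y_1$.'' A crosscap cannot join two disjoint faces: it is obtained by excising a disk from the \emph{interior of a single face} and identifying antipodal boundary points, so it only lets edges cross \emph{within} one face. Merging two triangles into an octagon is the handle operation of Figure~\ref{fig-8handle}, and spending a handle here would cost two units of nonorientable genus, blowing the budget. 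The legitimate single-crosscap edge addition (Figure~\ref{fig-crosscap-k2}) instead requires a \emph{common neighbor} $0$ of the two vertices, at which the local rerouting is performed --- and this route is closed to you, because $y_0$ and $y_1$ have no common neighbor: their neighborhoods partition the numbered vertices into evens and odds. So the first edge of your plan cannot be installed by either mechanism you invoke, and the rest of the argument does not get off the ground.

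The paper's actual construction circumvents exactly this obstruction. Since $x$ is adjacent to all numbered vertices, its rotation contains consecutive neighbors $\alpha$ (even) and $\beta$ (odd); one deletes the three edges $(y_0,\alpha)$, $(y_1,\beta)$, $(\alpha,\beta)$ and inserts \emph{both} crosscaps at once in a coordinated local modification near $\alpha$ and $\beta$ (Figure~\ref{fig-double-crosscap}), producing a single large face incident with all of $x$, $y_0$, $y_1$. The missing edges $(y_0,y_1)$, and after contraction $(x,y)$ and $(\alpha,\beta)$, are then forced chords of this face, and the two ways of re-inserting $(y_0,\alpha)$ and $(y_1,\beta)$ yield types $(5)$ and $(4,4)$ directly --- no preparatory edge flips, no trailing sequence of chord exchanges, and no case split on small $s$. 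If you want to salvage your outline, the key missing idea is this simultaneous two-crosscap surgery at an even--odd adjacent pair in row $x$; without it, neither $(y_0,y_1)$ nor, subsequently, $(x,y)$ can be added within the available genus.
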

\begin{proof}

The aforementioned current graph~\cite[Fig 8.27]{Ringel-MapColor} generates a triangular embedding of a graph $G_{12s+8}$ with vertices $0, 1, 2, \dotsc, 12s{+}5, x, y_0, y_1$ where all the numbered vertices are pairwise adjacent, $x$ is adjacent to all the numbered vertices, and $y_0$ (respectively, $y_1$) is adjacent to all the even- (respectively, odd-) numbered vertices. We use two crosscaps to connect $x$, $y_0$ and $y_1$. Since $x$ is incident with all the numbered vertices, the row of $x$ must be of the form
{
\setlength{\arraycolsep}{5pt}
$$\begin{array}{rrrrrrrrrrrrrrrrrrrrrrrrrrrrrrrr}
x. & \dots & \alpha & \beta & \dots
\end{array}$$
}
where $\alpha$ is even and $\beta$ is odd. As seen in Figure~\ref{fig-double-crosscap}, we delete the edges $(y_0, \alpha)$, $(y_1, \beta)$, and $(\alpha, \beta)$ and modify the embedding near the edges incident with $\alpha$ and $\beta$, creating a large face.

\begin{figure}[!ht]
\centering
\includegraphics{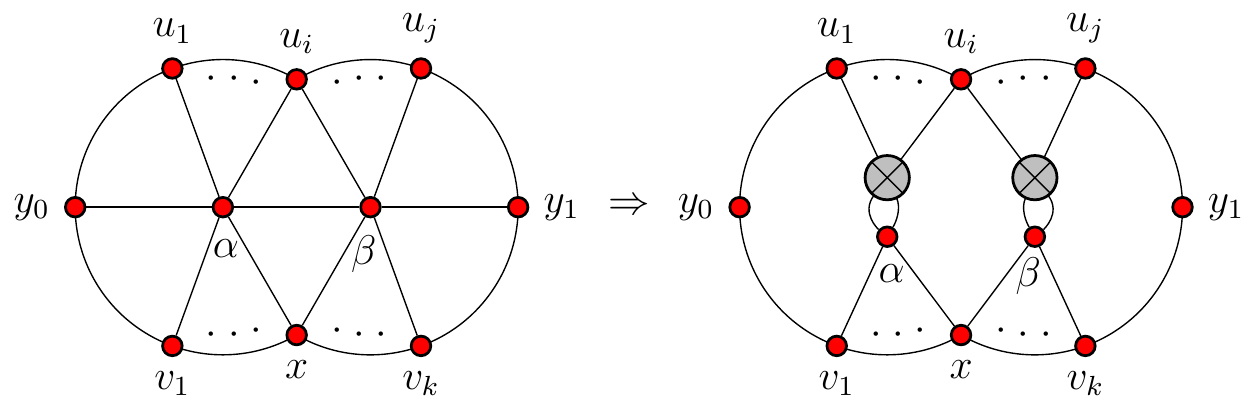}
\caption{Using two crosscaps to put $x$, $y_0$, and $y_1$ on the same face.}
\label{fig-double-crosscap}
\end{figure}

After adding the edge $(y_0,y_1)$ and contracting it, the placements of $(x,y)$, followed by $(\alpha,\beta)$, are forced, as shown in Figure~\ref{fig-doublecrosscap-face}. Depending on how we re-insert $(y_0, \alpha)$ and $(y_1, \beta)$, we get an embedding of type $(5)$, or $(4,4)$.

\begin{figure}[!ht]
\centering
\includegraphics{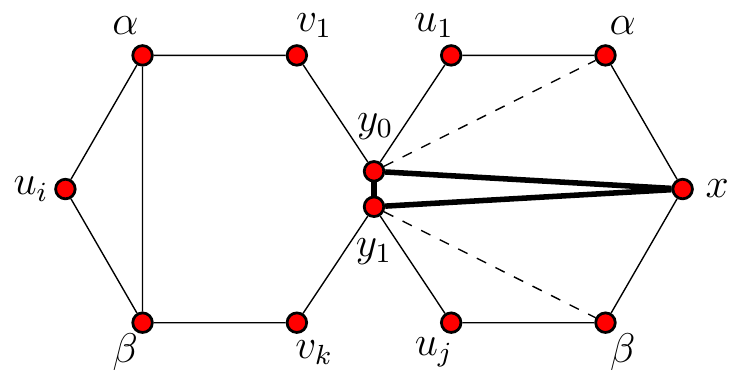}
\caption{Adding in all the missing edges after inserting two crosscaps. In this case, the choice of where to add the dashed edges yields an embedding of type $(5)$.}
\label{fig-doublecrosscap-face}
\end{figure}
\end{proof}

Case 2 featured an inductive construction where an embedding of $K_{12s+2}$ is built up from embeddings of smaller complete graphs. In particular, Youngs (see Ringel~\cite[\S10.2]{Ringel-MapColor}) proved the following:

\begin{theorem}
If there exists a nonorientable triangular embedding of $K_{2t+2}$ with two extra edges, then there exists a nonorientable triangular embedding of $K_{4t+2}$ with two extra edges. 
\label{thm-induct}
\end{theorem}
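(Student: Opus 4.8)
The proof is an explicit ``doubling'' construction at the level of rotation systems, in the spirit of Youngs' original argument (see Ringel~\cite[\S10.2]{Ringel-MapColor}); I sketch how it goes. First I would fix a convenient labeling of the hypothesized embedding: write the vertex set of $K_{2t+2}$ as $\Z_{2t} \cup \{z_0, z_1\}$, the vertices $z_0, z_1$ playing the role of the two ``central'' vertices that will not be split, and---after a preliminary edge flip (as in Section~\ref{sec-case8}) if necessary---arrange the two extra edges into a prescribed position relative to $z_0, z_1$. The target embedding will have vertex set $\Z_{4t} \cup \{z_0, z_1\}$, where each old numbered vertex $i \in \Z_{2t}$ is split into the pair $2i, 2i+1 \in \Z_{4t}$ while $z_0, z_1$ are carried over unchanged; a quick count of vertices, edges, and faces shows that such a doubling is exactly what the Euler formula for a triangular embedding of $K_{4t+2}$ with two extra edges demands (vertices roughly double, faces roughly quadruple).

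The construction then specifies the new rotation system one row at a time. The row at a split vertex $2i$ (resp.\ $2i+1$) is built from the old row at $i$ by doubling every numbered entry via $j \mapsto 2j$ (resp.\ $j \mapsto 2j+1$) and then applying a fixed ``swap'' that interchanges the neighborhoods of $z_0$ and $z_1$ on one of the two arcs of the row; the rows at $z_0, z_1$ are obtained from their old rows by the complementary interleaving of the new odd vertices. This is precisely the pattern that lets Rule R propagate. I would then verify, in order: (i) Rule R holds at every edge of the new scheme---numbered/numbered, numbered/central, and central/central---so by the criterion of Section~\ref{sec-nonorient} it describes a triangular embedding of \emph{some} surface; (ii) that surface is nonorientable, since the swap produces a one-sided cycle (equivalently, one can point to two faces whose boundaries traverse a common edge in the same direction); (iii) every pair of vertices of $\Z_{4t} \cup \{z_0, z_1\}$ is adjacent, by checking that the doubled adjacencies inherited from $K_{2t+2}$, together with the new adjacencies created within each split pair and between split pairs, exhaust all pairs; and (iv) the only edges of multiplicity exceeding one are the two designated extra edges, which are the images of the two extra edges of the input, so that the underlying graph is exactly $K_{4t+2}$ with two extra edges.

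The hard part is the joint design underlying steps (i) and (iv): choosing the interleaving and the swap so that Rule R is satisfied \emph{simultaneously} at all three kinds of edges while no unintended parallel edges are created. This is the delicate combinatorial core of Youngs' construction; once the pattern is in hand, (ii) is immediate from the way the crosscap is introduced, and (iii)--(iv) reduce to a finite check on residues modulo small numbers, with (i) becoming a mechanical verification that the prescribed rows are pairwise compatible under Rule R.
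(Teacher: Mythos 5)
This statement is not proved in the paper at all: it is quoted from Youngs (via Ringel~\cite[\S10.2]{Ringel-MapColor}), and the only thing the paper records about the argument is the one property it actually needs for Corollary~\ref{corr-induct}, namely that the construction \emph{glues the given embedding of $K_{2t+2}$ plus two edges, unaltered, to a triangular embedding of an auxiliary graph} (essentially $K_{4t+2}$ minus the clique on the $2t+2$ old vertices). Your proposal takes a genuinely different shape --- an intrinsic vertex-splitting/doubling of the rotation system with no auxiliary embedding --- and that difference matters: in the gluing formulation, the existence of a suitable triangular embedding of the auxiliary graph is a substantive ingredient (it is itself produced by a cascade/current-graph construction), and your outline has no counterpart for it. All of the cross adjacencies between the two halves of distinct split pairs have to come from somewhere, and a local rewriting rule applied to the old rows does not obviously supply them.

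More importantly, the proposal is not a proof even on its own terms. You explicitly defer ``the joint design underlying steps (i) and (iv)'' --- the choice of interleaving and swap making Rule R hold at all edges without creating unintended parallel edges --- and call it ``the delicate combinatorial core of Youngs' construction.'' That core \emph{is} the theorem; everything else you list ((ii)--(iv) and the Euler count) is routine bookkeeping. A concrete symptom that the sketch is not yet coherent: as written, the row at the new vertex $2i$ is obtained from the old row at $i$ by the substitution $j \mapsto 2j$, which makes $2i$ adjacent only to even-numbered vertices and gives it roughly half the required degree $4t+1$; a single ``swap'' of the $z_0,z_1$ neighborhoods cannot manufacture the missing $2t$ adjacencies per row. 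Until the actual doubling rule is specified and Rule R is verified for it --- or, alternatively, until the auxiliary triangular embedding is exhibited and the gluing is carried out --- there is no proof here, only a plan whose hardest step is named rather than executed.
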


The construction for Theorem~\ref{thm-induct} takes an embedding of $K_{2t+2}$ with two extra edges and glues it, without any additional augmentation, to a triangular embedding of another graph, 
so with the same construction we can show several related statements. 

\begin{corollary}
If there exists a nonorientable embedding of type $(5)$ (resp. type $(4,4)$) of $K_{2t+2}$, then there exists a nonorientable embedding of type $(5)$ (resp. type $(4,4)$) of $K_{4t+2}$.
\label{corr-induct} 
\end{corollary}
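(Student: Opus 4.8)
The plan is to recognize a type-$(5)$ (resp. type-$(4,4)$) embedding of $K_{2t+2}$ as exactly the data of a triangular embedding of $K_{2t+2}$ carrying two extra edges, feed that object into Youngs' construction from Theorem~\ref{thm-induct}, and then strip the two extra edges back off at the larger order.

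First I would record the (easy) dictionary between embedding types and triangular embeddings with doubled edges. Given a type-$(5)$ embedding of $K_{2t+2}$, its unique pentagonal face is simple by Lemma~\ref{lem-5simp}; writing it as $[v_1,v_2,v_3,v_4,v_5]$ and adding the two chords $(v_1,v_3)$ and $(v_1,v_4)$ --- both already present in the complete graph, so they appear as a doubled path $v_3\,v_1\,v_4$ --- gives a triangular embedding of ``$K_{2t+2}$ with two extra edges.'' Likewise, from a type-$(4,4)$ embedding I add one diagonal in each quadrilateral, obtaining a triangular embedding of $K_{2t+2}$ with two extra, independent, edges. In both cases deleting the two added parallel copies returns the original embedding, and the triangular faces incident with the added copies are precisely the triangles whose merger reproduces the pentagon (three triangles around $v_1$), resp. the two quadrilaterals.

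Next I would invoke the construction underlying Theorem~\ref{thm-induct}. As the text notes, that construction takes a triangular embedding of $K_{2t+2}$ with two extra edges and glues it, \emph{without any additional augmentation}, onto a triangular embedding of an auxiliary graph, producing a triangular embedding of $K_{4t+2}$ with two extra edges. The point I would verify is that the gluing is local away from the two extra edges: the copies $e_1,e_2$ survive in the new embedding together with the handful of triangular faces incident with them, since the faces used for the amalgamation may be taken disjoint from those triangles. Granting this, I delete $e_1$ and $e_2$ from the triangular embedding of $K_{4t+2}$ with two extra edges; because their incident faces are configured exactly as in the base case, the deletions merge triangles in the same pattern, leaving a single pentagon (type $(5)$) or two disjoint quadrilaterals (type $(4,4)$) with every other face a triangle, and the surface stays nonorientable since no crosscap was removed.

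The main obstacle is this last verification: confirming that Youngs' inductive gluing (Ringel, \S10.2) really does leave a chosen pair of exchangeable edges and their neighboring triangles untouched, so that the deletion at order $4t+2$ behaves identically to the deletion at order $2t+2$. If inspecting the construction shows the gluing is anchored at a specific vertex or face, the remedy is to choose the two extra edges (equivalently, the two pentagon chords or the two quadrilateral diagonals) to lie outside that region, which is possible since $K_{2t+2}$ has many vertices; the finitely many genuinely small base cases can be checked directly. As a convenience, matching Proposition~\ref{prop-simple6}, once a type-$(5)$ embedding of $K_{4t+2}$ is in hand a single chord exchange in its pentagonal face yields the type-$(4,4)$ embedding, so the two statements need not be pushed through the induction independently.
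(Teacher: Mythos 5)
Your proposal is correct and matches the paper's argument: both rest on the single observation that Youngs' construction uses the order-$(2t+2)$ embedding as an unmodified block, glued to a triangular embedding of an auxiliary graph, so its nontriangular faces (or, in your formulation, the two doubled edges and their incident triangles) survive intact into the order-$(4t+2)$ embedding. The only cosmetic difference is that the paper feeds the type $(5)$ or type $(4,4)$ embedding into the construction directly, avoiding your detour of adding the two chords and then deleting them again at the larger order.
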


Combining Corollary~\ref{corr-induct} with the above construction for Case 8, we obtain

\begin{theorem}
For $s \geq 1$, there exist nonorientable embeddings of type $(5)$ and $(4,4)$ of $K_{12s+2}$. 
\end{theorem}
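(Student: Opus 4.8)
The plan is to reduce this Case~2 statement to the already-solved Case~8 statement via the doubling construction of Corollary~\ref{corr-induct}. First I would write $n = 12s+2 = 4t+2$ with $t = 3s$, so that by Corollary~\ref{corr-induct} it suffices to produce a nonorientable embedding of type $(5)$ (resp.\ type $(4,4)$) of $K_{2t+2} = K_{6s+2}$. The observation that makes this work is purely arithmetic: $6s+2$ always lands in a Case already handled. If $s$ is odd, then $6s+2 = 12\cdot\frac{s-1}{2}+8$ with $\frac{s-1}{2}\ge 0$, which is a Case~8 graph; if $s$ is even, then $6s+2 = 12\cdot\frac{s}{2}+2$ is again a Case~2 graph, but for a strictly smaller parameter.

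Accordingly I would argue by strong induction on $s \ge 1$. In the odd (base) case, the nonorientable Case~8 embeddings constructed above supply an embedding of type $(5)$ (resp.\ type $(4,4)$) of $K_{6s+2} = K_{2(3s)+2}$ --- when $s=1$ this is the embedding of $K_8$ coming from that construction with parameter $0$ --- and a single application of Corollary~\ref{corr-induct} with $t = 3s$ promotes it to the desired embedding of $K_{12s+2}$. In the even case $s = 2u$ we have $1 \le u < s$, so the induction hypothesis provides a type $(5)$ (resp.\ type $(4,4)$) embedding of $K_{12u+2} = K_{6s+2} = K_{2(3s)+2}$, and again Corollary~\ref{corr-induct} with $t = 3s$ delivers the embedding of $K_{12s+2}$. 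Since every step either finishes or replaces $s$ by a strictly smaller value, the recursion terminates. Equivalently, one can unwind the induction at once: writing $s = 2^{a}m$ with $m$ odd, start from the nonorientable embedding of type $(5)$ (resp.\ type $(4,4)$) of $K_{6m+2}$ (a Case~8 graph) and apply Corollary~\ref{corr-induct} exactly $a+1$ times to reach $K_{12s+2}$.

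I do not expect a genuine obstacle here: both ingredients --- the nonorientable type $(5)$ and $(4,4)$ embeddings of $K_{12s+8}$ and the doubling construction underlying Corollary~\ref{corr-induct} --- are already in place, and that construction glues the given embedding onto a triangulation without disturbing its nontriangular face, so the face type is preserved. The only point that needs care is checking that the reduction $12s+2 \mapsto 6s+2$ keeps us inside solved Cases, i.e.\ that repeatedly halving $s$ eventually produces an odd number so the recursion bottoms out at a Case~8 graph; this is immediate from the $2$-adic valuation of $s$.
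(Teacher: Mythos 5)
Your proposal is correct and is essentially the paper's own argument: the paper likewise runs an induction (phrased over $K_{6q+2}$, $q\ge 1$, with odd $q$ handled by the nonorientable Case 8 construction and even $q$ reduced via Corollary~\ref{corr-induct} with $t=\tfrac{3}{2}q$), which is the same recursion you describe after unwinding. No gaps.
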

\begin{proof}
We show by induction that such embeddings exist for $K_{6q+2}$ for $q \geq 1$. Half of the work is already done---when $q$ is odd, this is Case 8. When $q$ is even, suppose there exist embeddings of type $(5)$ and $(4,4)$ of $K_{6q'+2}$ for all $q' < q$. Then, apply Corollary~\ref{corr-induct} for $t = \frac{3}{2}q$. 
\end{proof}

Finally, we are left with the exceptional case $K_7$. Franklin~\cite{Franklin-SixColor} showed that $K_7$ does not embed in $N_2$, but the graph is embeddable in $N_3$---we simply add a crosscap along any edge to the triangular embedding of $K_7$ in the torus $S_1$. From this (cellular) embedding we obtain the other embedding types.

\begin{proposition}
$K_7$ has nonorientable embeddings of type $(6)$, $(5,4)$, and $(4,4,4)$.
\end{proposition}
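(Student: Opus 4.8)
The plan is to start from a triangular embedding of $K_7$ in the torus $S_1$, with its $14$ triangular faces, and to manipulate a single nontriangular face, in the spirit of Proposition~\ref{prop-simple6}. Adding a crosscap along any edge $e=(u,v)$ gives a cellular embedding of $K_7$ in $S_1$ with one extra crosscap, i.e.\ in $N_3$; since $\overline{\gamma}(K_7)=3$ this is a minimum genus embedding, and it has $13$ faces. The crosscap fuses the two triangles incident with $e$ into a single hexagonal face, so the embedding is already of type $(6)$. The one subtlety is that $e$ survives and now bounds this hexagon on \emph{both} of its sides, so the hexagon has the form $[a,b,c,a,b,d]$ (with $a,b,c,d$ distinct)---the ``M\"obius'' configuration that Lemma~\ref{lem-5simp} excludes for orientable embeddings, with \emph{two} repeated vertices rather than the single one permitted in an orientable type $(6)$ face. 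Hence Proposition~\ref{prop-simple6} cannot be applied verbatim, and the chord exchanges must be chosen by hand. (In a concrete model: $V(K_7)=\Z_7$, rotation $(1,3,2,6,4,5)$ at vertex $0$ and its translates; a crosscap along $(0,1)$ gives the hexagon $[0,1,3,0,1,5]$.)

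For type $(4,4,4)$ I would perform the chord exchange $\pm(c,d)$, where $(c,d)$ is the unique diagonal of the hexagon $[a,b,c,a,b,d]$ joining two distinct corners that is not itself a boundary edge of the hexagon (the ``diagonals'' $(a,a)$ and $(b,b)$ being degenerate). Since the hexagon is the only nontriangular face, $(c,d)$ presently lies on two triangles, which merge into a quadrilateral when it is deleted; reinserting $(c,d)$ as a chord then splits the hexagon into the two quadrilaterals $[c,a,b,d]$ and $[d,a,b,c]$, leaving all other faces triangular (compare Figure~\ref{fig-6to444}). In the model this is $\pm(3,5)$, yielding the quadrilaterals $[3,0,1,5]$, $[5,0,1,3]$, and the one obtained by merging the two triangles on $(3,5)$.

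For type $(5,4)$ I would apply one more chord exchange to the type $(4,4,4)$ embedding: pick an edge $f$ that is both a boundary edge of one of the three quadrilaterals---shared there with an adjacent triangle---and a diagonal of a second quadrilateral. Exchanging $f$ merges the first quadrilateral with that triangle into a pentagon while cutting the second quadrilateral into two triangles, so exactly one pentagon and one quadrilateral survive. In the model one may take $f=(1,3)$, which is a side of $[5,0,1,3]$ meeting the triangle $[1,3,4]$ and a diagonal of $[3,0,1,5]$; then $\pm(1,3)$ produces the pentagon $[3,5,0,1,4]$ together with the surviving quadrilateral on $\{2,3,5,6\}$.

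Every step above is a chord exchange, which leaves the genus unchanged, so all three embeddings lie in $N_3$ and are of minimum genus. The main obstacle---and the one place where an orientable argument genuinely fails to transfer---is the bookkeeping: for each exchanged edge one must verify that it currently bounds exactly the claimed pair of faces (the crosscap and any earlier exchanges having disturbed the face structure) and that reinserting it as a chord creates neither a loop nor a parallel edge. Because $K_7$ is small these are finite checks, and since $i\mapsto 2i$ is an automorphism of the torus embedding acting transitively on its edges, the edge carrying the crosscap may be chosen arbitrarily.
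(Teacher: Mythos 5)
Your proof is correct, and it begins exactly as the paper does: a crosscap along an arbitrary edge $(a,b)$ fuses the two incident triangles into the hexagon $[a,b,c,a,b,d]$, immediately giving type $(6)$, and you rightly flag that this face has \emph{two} repeated vertices, so Proposition~\ref{prop-simple6} does not apply as stated. Where you diverge is in how you repair this. The paper first performs the single chord exchange $\pm(b,d)$ to convert the hexagon into one with only one repeated vertex, after which the machinery of Proposition~\ref{prop-simple6} produces types $(5,4)$ and $(4,4,4)$ uniformly. You instead observe that the doubly-repeated hexagon has exactly one non-degenerate, non-boundary diagonal, namely $(c,d)$, exchange it to split the hexagon into $[c,a,b,d]$ and $[d,a,b,c]$ (plus the quadrilateral from the merged triangles on the old $(c,d)$), obtaining $(4,4,4)$ directly, and then derive $(5,4)$ from the $(4,4,4)$ embedding by one further exchange of an edge that is a side of one quadrilateral and a diagonal of another. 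Both routes are two short chord-exchange computations; the paper's has the small advantage of reusing Proposition~\ref{prop-simple6} verbatim and of not needing the second-stage incidence check (that the edge $f$ meets a triangle on its far side and misses the third quadrilateral), which your argument delegates to a finite verification in the concrete $\Z_7$ model. That verification does go through---in the standard toroidal embedding the triangles on $(3,5)$ are $[2,3,5]$ and $[3,5,6]$, and $[1,3,4]$ is indeed a face untouched by the crosscap and the first exchange---so your proof is complete as written, just organized around a different sequence of exchanges.
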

\begin{proof}
Let $(a,b)$ be an arbitrary edge, and suppose the rotation at $b$ is of the form
{
\setlength{\arraycolsep}{5pt}
$$\begin{array}{rrrrrrrrrrrrrrrrrrrrrrrrrrrrrrrr}
b. & \dots & c & a & d & e & \dots
\end{array}$$
}
Adding a crosscap along the edge $(a,b)$ produces the 6-sided face
$$[a, b, c, a, b, d].$$
Exchanging the chord $(b,d)$ creates a 6-sided face with one repeated vertex $b$, as in Figure~\ref{fig-non7}. The remaining embeddings follow from the same construction as used in Proposition~\ref{prop-simple6}.

\begin{figure}
\centering
\includegraphics{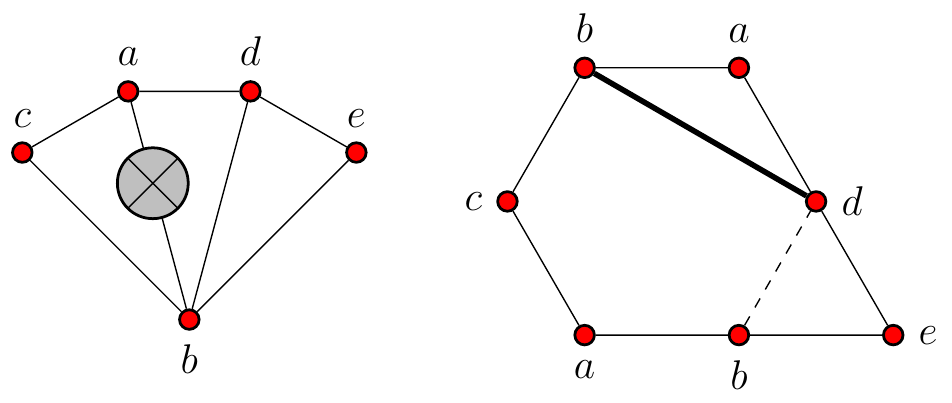}
\caption{The embedding of $K_7$ in $N_3$ is already nearly triangular. A few chord exchanges yield the remaining embedding types.}
\label{fig-non7}
\end{figure}
\end{proof}

\section{Maximum genus embeddings}\label{sec-maxgenus}

The \emph{(orientable) maximum genus} $\gamma_M(G)$ is the largest integer $g$ such that $G$ has a cellular embedding in $S_g$. Archdeacon and Craft~\cite{Archdeacon} also ask if $K_n$ has a nearly triangular maximum genus embedding. Nordhaus \emph{et al.}~\cite{Nordhaus-MaxGenus} show that $K_n$ is \emph{upper-embeddable}, meaning it has an embedding with one or two faces, depending on the parity of $|V(G)|-|E(G)|$. In particular, the maximum genus embedding has one face exactly when $n \equiv 1, 2 \pmod{4}$. The one-face embeddings are already nearly triangular in a trivial way, so we need a construction just for two-face embeddings. 

A special case of Xuong's characterization~\cite{Xuong-MaximumGenus} of maximum genus states that a graph $G$ is upper-embeddable if and only if there is a spanning tree $T$ such that $G-T$ has at most one component with an odd number of edges. To construct the one- or two-face embedding, the edges of $G-T$ are partitioned into pairs such that the edges of each pair share a vertex. Starting with an arbitrary embedding of the spanning tree $T$ in the plane (which has one face), we add the pairs one by one, as in Figure~\ref{fig-2add}. After each addition, the resulting embedding still has one face. If there is an edge left over (i.e. one of the edges of the odd-sized component), it is added arbitrarily into the embedding, resulting in a two-face embedding. 

\begin{figure}[!ht]
\centering
\includegraphics{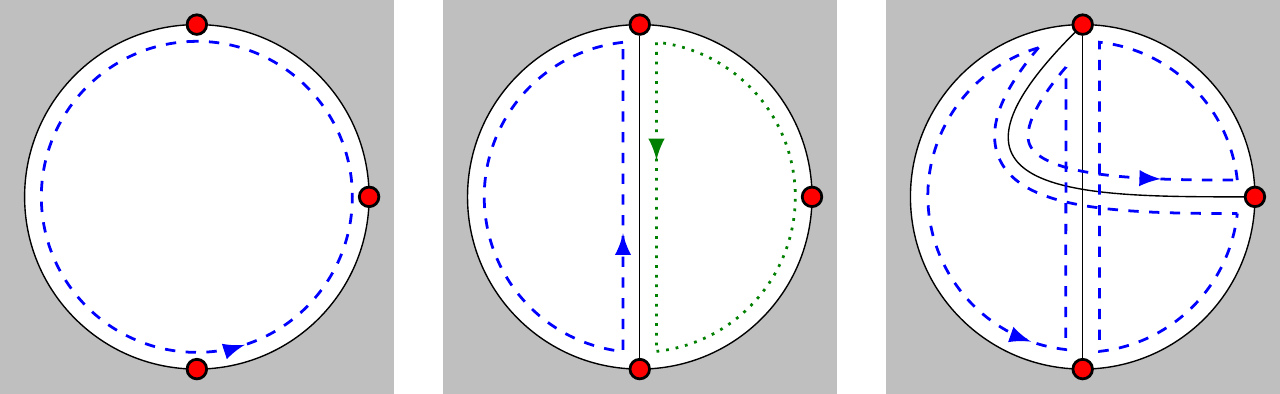}
\caption{Starting from a one-face embedding, we can add two incident edges to get another one-face embedding.}
\label{fig-2add}
\end{figure}

We note that the final embedding of $G$ restricted to $T$ is the same as the original embedding of $T$ that we started with. This observation is enough for constructing a nearly triangular two-face embedding.

\begin{proposition}
For $n \equiv 0, 3 \pmod{4}$, there exists a two-face embedding of $K_n$ where one of the faces is a triangle. 
\end{proposition}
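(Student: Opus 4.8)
The plan is to run exactly the two-face construction described above (one-face embedding of a spanning tree, incident pairs of edges added one at a time keeping a single face, then one leftover edge splitting it in two), but with the spanning tree and the insertion schedule chosen so that the leftover edge cuts off a triangle. The underlying observation is this: if $H$ has a one-face embedding and $e=(u,v)\notin E(H)$ is an edge whose single face, read as a closed walk, contains three consecutive vertices $u,w,v$, then inserting $e$ through the corner at $u$ immediately preceding this occurrence of $w$ and the corner at $v$ immediately following it splits the face into the triangle $[u,w,v]$ and one other face; this is the routine face-splitting computation via the Heffter--Edmonds principle. So it suffices to produce a one-face embedding of $K_n$ minus a single edge $e$ whose face contains, in this consecutive sense, the two endpoints of $e$ with a common neighbour between them.

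First I would relabel so that $1,2,3$ are vertices, fix a spanning tree $T$ of $K_n$ in which $\deg_T(2)=2$ with neighbours $1$ and $3$ (for instance $T$ a Hamiltonian path $1{-}2{-}\dots{-}n$), and designate $e=(1,3)$ as the leftover edge. Every component of $K_n-T-(1,3)$ has an even number of edges — the total is $\beta(K_n)-1=(n-1)(n-2)/2-1$, which is even precisely because $n\equiv 0,3\pmod 4$ — so that edge set partitions into pairs of incident edges and the construction applies with $(1,3)$ left over. Starting from any one-face planar embedding of $T$, its single face traverses the subwalk $3,2,1$, since the rotation $(1,3)$ at vertex $2$ has a corner the face reads as ``arrive from $3$, leave to $1$.'' The goal is to keep that one corner at vertex $2$ alive through all subsequent edge additions: additions not incident to $2$ do not touch the rotation at $2$, and whenever we add an edge $(2,j)$ of $K_n-T$ we insert $2j$ into the rotation at $2$ on the arc between the ends of $21$ and $23$ that is disjoint from that corner, so $21$ and $23$ stay consecutive there. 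Once all of $K_n-T-(1,3)$ has been added, the single face still contains $3,2,1$ consecutively, and inserting $(1,3)$ as above yields a two-face embedding of $K_n$ one of whose faces is the triangle $[1,2,3]$.

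The hard part will be checking that these constrained insertions at vertex $2$ are compatible with keeping the embedding one-faced at every step. When a pair consists of two edges $(2,j),(2,k)$ at vertex $2$, inserting $2j$ splits the current single face into $F_1,F_2$; one then inserts $2k$ at vertex $2$ in whichever of the two admissible positions (both still ``between'' $21$ and $23$, hence both harmless to the target corner) puts its end on the face not containing vertex $k$, so that $2k$ re-merges $F_1$ and $F_2$ — this is possible because $k$ lay on the face before the split. When $n$ is even and one edge $(2,j)$ has no vertex-$2$ partner, I would instead pair it with an edge $(j,k)$ not incident to $2$ and add this pair first, while only $T$ is present: the insertion of $2j$ is again forced to lie between $21$ and $23$, and $(j,k)$ re-merges the two faces using only the freedom at vertices $j$ and $k$. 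The remaining verifications — connectivity of $K_n-T-(1,3)$ and hence its partition into incident pairs, and the bookkeeping that the final embedding has exactly two faces — are standard and I would dispatch them briefly.
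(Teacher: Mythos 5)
Your proposal is correct and follows the same overall strategy as the paper: run the upper-embeddability construction (one-face embedding of a spanning tree, then adjacent pairs of edges added so as to preserve the one-face property), designate one leftover edge in advance, and protect a single corner so that the final insertion cuts off a triangle. The difference is the choice of spanning tree, and it matters for how much work is left. The paper takes $T$ to be the star centered at vertex $1$ and the leftover edge to be $(2,3)$; then every edge of $K_n - (2,3) - T$ avoids vertex $1$ entirely, so the rotation at vertex $1$ is never modified, the corner $\dots 2, 1, 3 \dots$ survives automatically, and the entire ``hard part'' you identify simply does not arise. Your Hamiltonian path forces $n-3$ non-tree edges into vertex $2$, and you must then argue that each constrained insertion there is compatible with returning to a one-face embedding after every pair. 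Your case analysis does go through: inserting $(2,j)$ into a non-protected corner of vertex $2$ creates one new non-protected corner of vertex $2$ on each of the two resulting faces, so the second edge of the pair (whether $(2,k)$ or $(j,k)$) can always be placed to re-merge them without touching the protected corner, and your two cases exhaust the possible pair types, so no special ordering or special treatment of even $n$ is actually needed. Both parity computations agree ($(n-1)(n-2)/2 - 1$ is even exactly for $n \equiv 0,3 \pmod 4$). In short, your argument is sound but buys nothing for its extra complexity; the lesson from the paper's proof is that choosing the tree to be a star at the vertex whose corner you wish to protect makes the preservation step vacuous.
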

\begin{proof}
Label the vertices $1, \dotsc, n$. Delete the edge $(2,3)$ and let the spanning tree $T$ be all the edges incident with vertex $1$. Then, $(K_n - (2,3)) - T$ is connected and has an even number of edges. Let the rotation at vertex $1$ simply be 
{
\setlength{\arraycolsep}{5pt}
$$\begin{array}{rrrrrrrrrrrrrrrrrrrrrrrrrrrrrrrr}
1. & 2 & 3 & \dotsc & n.
\end{array}$$
}
Adding in all the edge pairs in the manner described above preserves the rotation at $1$, resulting in an embedding with one face of the form $[\dotsc 2, 1, 3 \dotsc]$. We can then insert the edge $(2,3)$ into the embedding to get one triangular face $[2, 1, 3]$ and one long nontriangular face. 
\end{proof}

Finally, the problem for the nonorientable maximum genus $\overline{\gamma}_M$ is the simplest of them all. A well-known result (see, e.g., Stahl~\cite{Stahl-GeneralizedEmbedding}) states that \emph{every} connected graph has a one-face embedding in a nonorientable surface, so there is nothing to prove. We use the basic construction to prove the following ``interpolation'' theorem:

\begin{corollary}
For every nonorientable surface $N_k$, where $$k \in [\overline{\gamma}(K_n), \overline{\gamma}_M(K_n)],$$ there exists a nearly triangular embedding of $K_n$ in $N_k$. 
\end{corollary}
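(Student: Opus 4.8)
The plan is to start from a nearly triangular embedding $\psi_0$ of $K_n$ realizing the minimum nonorientable genus $\overline{\gamma}(K_n)$ --- which exists for every $n\geq 3$ by the constructions of this section (a type $(5)$ embedding for $n\equiv 2,5,8,11\pmod{12}$, a type $(6)$ embedding for $n=7$) together with the classical nonorientable triangulations of $K_n$ for the remaining residues --- and then to add crosscaps one at a time, each step raising the nonorientable genus by exactly $1$ while keeping the embedding nearly triangular, until only a single face remains. Since a one-face embedding of $K_n$ necessarily lies in $N_{|E(K_n)|-|V(K_n)|+1}=N_{\overline{\gamma}_M(K_n)}$, reaching the one-face stage will have swept out exactly the interval $[\overline{\gamma}(K_n),\overline{\gamma}_M(K_n)]$.

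The only operation used is the ``crosscap along an edge'' already employed in the construction for $K_7$ (Figure~\ref{fig-non7}): if an edge $e$ borders two \emph{distinct} faces $f$ and $T$, then adding a crosscap along $e$ replaces them with a single face of length $\ell(f)+\ell(T)$, decreases the number of faces by $1$, and increases the nonorientable genus by $1$. Given a nearly triangular embedding with a unique nontriangular face $f$ (or, at the first step of a triangulation, picking $f$ to be an arbitrary face), I would choose $e$ on $\partial f$ bordering some face other than $f$; as every face distinct from $f$ is a triangle, the merged face has length $\ell(f)+3>3$ and all remaining faces are untouched triangles, so the result is again nearly triangular. If $\psi_0$ has $F_0$ faces, iterating this produces nearly triangular embeddings in $N_{\overline{\gamma}(K_n)},N_{\overline{\gamma}(K_n)+1},\dots,N_{\overline{\gamma}(K_n)+F_0-1}$, the last of which has one face; by the Euler polyhedral equation $\overline{\gamma}(K_n)+F_0-1=\overline{\gamma}_M(K_n)$, so this covers every surface in the stated range.

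The one point that genuinely needs proof, and the main (if modest) obstacle, is that at each stage one really can find an edge on $\partial f$ bordering a face other than $f$ --- an edge that is not a ``self-adjacency'' of $f$ --- since a crosscap along a self-adjacency would split $f$ rather than merge it. This holds in any cellular embedding of a connected graph with at least two faces: if every edge incident with $f$ had $f$ on both of its sides, then, picking a vertex $v$ of $\partial f$ also incident with an edge not lying on $\partial f$ (such a $v$ exists by connectedness, as $f$ is not the only face) and reading around the rotation at $v$, one would pass from an $f$-corner to a non-$f$-corner across a single edge of $\partial f$, giving that edge a side not belonging to $f$, a contradiction. I would also record that no genus value is skipped: once $\overline{\gamma}(K_n)\geq 1$ the surface is already nonorientable so a crosscap raises its genus by exactly $1$, and for the two cases $n\in\{3,4\}$ where $\psi_0$ lives on the sphere the first crosscap simply yields the projective plane.
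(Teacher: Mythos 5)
Your proposal is correct and follows essentially the same route as the paper: start from a nearly triangular minimum nonorientable genus embedding, repeatedly add a crosscap along an edge separating the (chosen) nontriangular face from a triangle so that the two faces merge, and stop at the one-face embedding, which is the maximum genus one. The only cosmetic difference is that you justify the existence of a non-self-adjacent edge on the distinguished face by a combinatorial argument on the rotation at a vertex, whereas the paper uses path-connectedness of the punctured surface; both are valid.
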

\begin{proof}
Let $\phi$ be a nearly triangular minimum genus embedding of $K_n$. Let $f$ be the nontriangular face in $\phi$---if the embedding is triangular, select any face arbitrarily. If $\phi$ is not already a one-face embedding, then there exists an edge incident with $f$ and a different face.\footnote{One way of seeing this is to note that $N_k \setminus V(K_n)$ is path-connected, and hence a path from some other face to $f$ in this punctured surface must intersect such an edge.} Adding a crosscap on this edge merges the two faces, incrementing the genus of the embedding. Applying this procedure repeatedly, starting from a minimum genus embedding and ending at a one-face embedding, yields the desired result.
\end{proof}

\section{Concluding Remarks}

We resolved the question of Archdeacon and Craft~\cite{Archdeacon}, classifying the complete graphs with a nearly triangular minimum genus embedding. Interest in these types of embeddings originated in searching for nonisomorphic minimum genus embeddings of the complete graph. While Korzhik and Voss~\cite{KorzhikVoss} found exponential families of embeddings for the complete graphs that do not triangulate a surface, their approach only looked at a single face distribution per graph. Can the results presented here be used to construct exponential families for the other face distributions? 

The techniques of Korzhik and Voss~\cite{KorzhikVoss} are extendable to Cases 1, 8, 10, 11 (and 9) by modifying the rotations at the hidden vertices (i.e. those replaced by the box in Figure~\ref{fig-ladder}). They also construct exponential families of nearly triangular embeddings for Case 5, so the same construction with chord exchanges should produce exponential families for the other face distributions. The situation is uncertain for Cases 2 and 6---Korzhik and Voss had constructions for these Cases, but they used different current graphs that have not been shown to lead to nearly triangular embeddings. 

For Case 6, we found a solution using a result of Gross~\cite{Gross-Case6} for the related problem of finding triangular embeddings of ``nearly complete'' graphs. Do other nearly complete graphs belonging to the other Cases have similar results? Case 9 seems to be the most accessible, since triangular embeddings of $K_{12s+9}-K_3$ can be constructed in a similar way (see Youngs~\cite{Youngs-3569}) as the embeddings of $K_{12s+6}-K_3$ used by Gross.  

The theory of index 3 current graphs~\cite[\S9]{Ringel-MapColor} allowed us to extend the additional adjacency approach of Ringel and Youngs~\cite{RingelYoungs-Case8} to handle $K_{20}$, which previously needed a purely impromptu embedding. The current graph used to generate the rows found in Section~\ref{sec-case8} is drawn in Figure~\ref{fig-index3}. Two interesting directions would be generalizing this solution for all larger graphs in Case 8, or using the same approach for Case 11. Vortices of type (T3) have a natural interpretation in index 3 current graphs, so this approach seems viable. 

\begin{figure}[!ht]
\centering
\includegraphics[scale=.85]{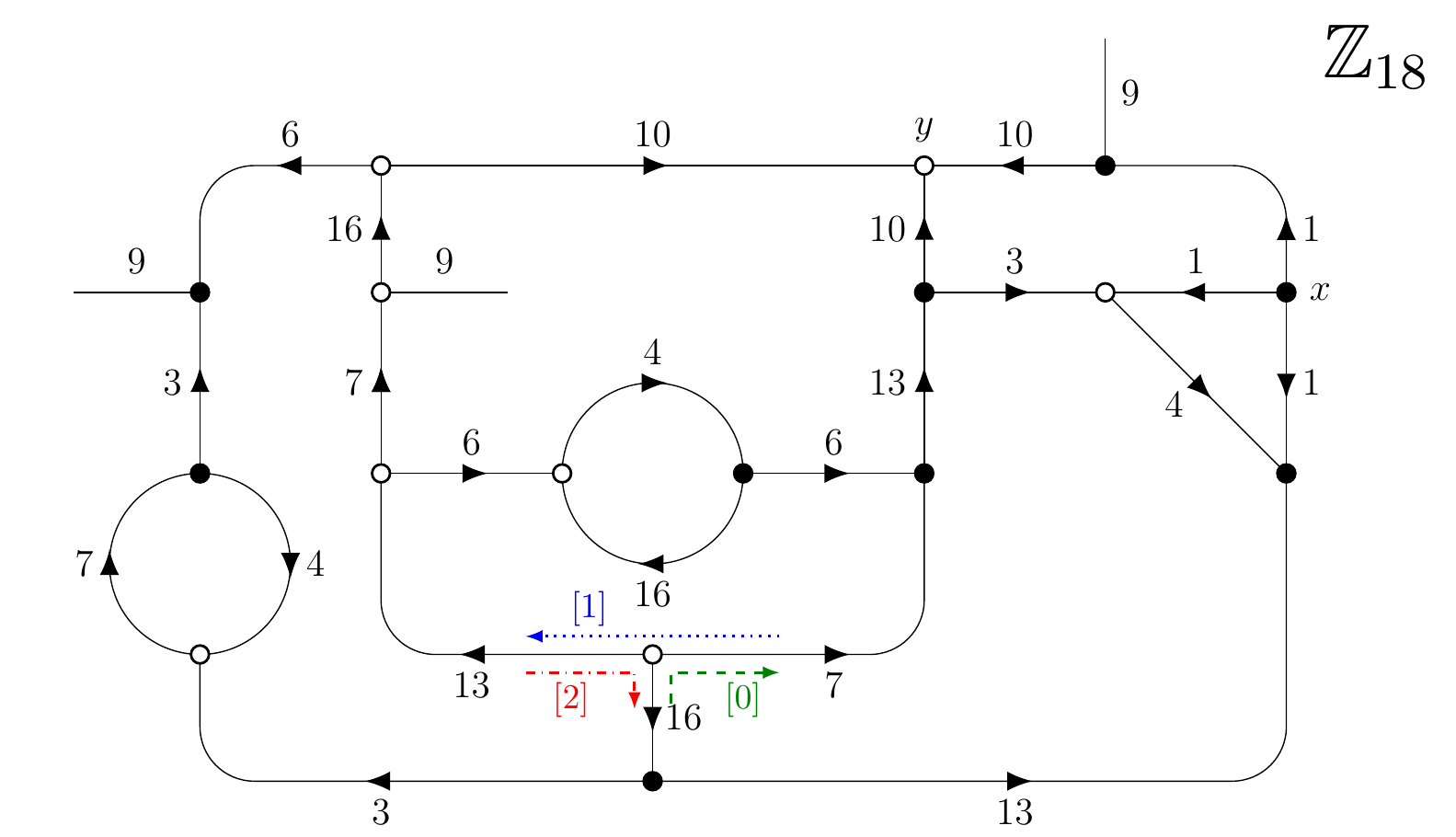}
\caption{An index 3 current graph for Case 8, $s=1$.}
\label{fig-index3}
\end{figure}

\bibliographystyle{alpha}
\bibliography{biblio}

\begin{thebibliography}{BGG{\v{S}}00}

\bibitem[Arc95]{Archdeacon}
Dan Archdeacon.
\newblock {Nearly Triangular Imbeddings of Complete Graphs}.
\newblock
  \url{http://www.cems.uvm.edu/TopologicalGraphTheoryProblems/neartri.htm},
  1995.
\newblock Accessed: 2017-06-09.

\bibitem[BGG{\v{S}}00]{Bonnington-Exponential}
C.~Paul Bonnington, Mike~J. Grannell, Terry~S. Griggs, and Jozef
  {\v{S}}ir{\'a}{\v{n}}.
\newblock Exponential families of non-isomorphic triangulations of complete
  graphs.
\newblock {\em Journal of Combinatorial Theory, Series B}, 78(2):169--184,
  2000.

\bibitem[Fra34]{Franklin-SixColor}
Philip Franklin.
\newblock A six color problem.
\newblock {\em Studies in Applied Mathematics}, 13(1-4):363--369, 1934.

\bibitem[GF87]{GrossFurst}
Jonathan~L. Gross and Merrick~L. Furst.
\newblock Hierarchy for imbedding-distribution invariants of a graph.
\newblock {\em Journal of graph theory}, 11(2):205--220, 1987.

\bibitem[GKN03]{Gagarin-Torus}
Andrei Gagarin, William Kocay, and Daniel Neilson.
\newblock Embeddings of small graphs on the torus.
\newblock {\em Cubo Mat. Educ.}, 5(2):351--371, 2003.

\bibitem[Gro75]{Gross-Case6}
Jonathan~L. Gross.
\newblock {The genus of nearly complete graphs--- Case 6 }.
\newblock {\em Aequationes Mathematicae}, 13(3):243--249, 1975.

\bibitem[GR{\v{S}}07]{Goddyn-Exponential}
Luis Goddyn, R.~Bruce Richter, and Jozef {\v{S}}ir{\'a}{\v{n}}.
\newblock Triangular embeddings of complete graphs from graceful labellings of
  paths.
\newblock {\em Journal of Combinatorial Theory, Series B}, 97(6):964--970,
  2007.

\bibitem[GT87]{GrossTucker}
Jonathan~L. Gross and Thomas~W. Tucker.
\newblock {\em {Topological Graph Theory}}.
\newblock John Wiley \& Sons, 1987.

\bibitem[GY73]{GuyYoungs}
Richard~K. Guy and J.W.T. Youngs.
\newblock A smooth and unified proof of cases 6, 5 and 3 of the {Ringel-Youngs
  Theorem}.
\newblock {\em Journal of Combinatorial Theory, Series B}, 15(1):1--11, 1973.

\bibitem[Hun78]{Huneke-Minimum}
John~Philip Huneke.
\newblock A minimum-vertex triangulation.
\newblock {\em Journal of Combinatorial Theory, Series B}, 24(3):258--266,
  1978.

\bibitem[Jun75]{Jungerman-KnK2}
Mark Jungerman.
\newblock The genus of {$K_n-K_2$}.
\newblock {\em Journal of Combinatorial Theory, Series B}, 18(1):53--58, 1975.

\bibitem[Kor95]{Korzhik-Case8}
Vladimir~P. Korzhik.
\newblock {A nonorientable triangular embedding of $K_n-K_2$, $n\equiv 8
  \pmod{12}$}.
\newblock {\em {Discrete Mathematics}}, 141(1-3):195--211, 1995.

\bibitem[KV01]{Korzhik-Exponential}
Vladimir~P. Korzhik and Heinz-J{\"u}rgen Voss.
\newblock On the number of nonisomorphic orientable regular embeddings of
  complete graphs.
\newblock {\em Journal of Combinatorial Theory, Series B}, 81(1):58--76, 2001.

\bibitem[KV02]{KorzhikVoss}
Vladimir~P. Korzhik and Heinz-J{\"u}rgen Voss.
\newblock Exponential families of non-isomorphic non-triangular orientable
  genus embeddings of complete graphs.
\newblock {\em Journal of Combinatorial Theory, Series B}, 86(1):186--211,
  2002.

\bibitem[LNW94]{LNW}
Serge Lawrencenko, Seiya Negami, and Arthur~T. White.
\newblock Three nonisomorphic triangulations of an orientable surface with the
  same complete graph.
\newblock {\em Discrete Mathematics}, 135(1-3):367--369, 1994.

\bibitem[May69]{Mayer-Orientables}
Jean Mayer.
\newblock Le probleme des r{\'e}gions voisines sur les surfaces closes
  orientables.
\newblock {\em Journal of Combinatorial Theory}, 6(2):177--195, 1969.

\bibitem[NSW71]{Nordhaus-MaxGenus}
E.A. Nordhaus, B.M. Stewart, and Arthur~T. White.
\newblock On the maximum genus of a graph.
\newblock {\em Journal of Combinatorial Theory, Series B}, 11(3):258--267,
  1971.

\bibitem[Rin74]{Ringel-MapColor}
Gerhard Ringel.
\newblock {\em {Map Color Theorem}}, volume 209.
\newblock Springer Science \& Business Media, 1974.

\bibitem[RY68]{RingelYoungs}
Gerhard Ringel and J.W.T. Youngs.
\newblock {Solution of the Heawood map-coloring problem}.
\newblock {\em Proceedings of the National Academy of Sciences},
  60(2):438--445, 1968.

\bibitem[RY69a]{RingelYoungs-Case11}
Gerhard Ringel and J.W.T. Youngs.
\newblock {Solution of the Heawood map-coloring problem --- Case 11}.
\newblock {\em Journal of Combinatorial Theory}, 7(1):71--93, 1969.

\bibitem[RY69b]{RingelYoungs-Case2}
Gerhard Ringel and J.W.T. Youngs.
\newblock {Solution of the Heawood map-coloring problem --- Case 2}.
\newblock {\em Journal of Combinatorial Theory}, 7(4):342--352, 1969.

\bibitem[RY69c]{RingelYoungs-Case8}
Gerhard Ringel and J.W.T. Youngs.
\newblock {Solution of the Heawood map-coloring problem --- Case 8}.
\newblock {\em Journal of Combinatorial Theory}, 7(4):353--363, 1969.

\bibitem[Sta78]{Stahl-GeneralizedEmbedding}
Saul Stahl.
\newblock Generalized embedding schemes.
\newblock {\em Journal of Graph Theory}, 2(1):41--52, 1978.

\bibitem[Whi01]{White-GraphGroups}
Arthur~T. White.
\newblock {\em Graphs of groups on surfaces: interactions and models}, volume
  188.
\newblock Elsevier, 2001.

\bibitem[Xuo79]{Xuong-MaximumGenus}
Nguyen~Huy Xuong.
\newblock How to determine the maximum genus of a graph.
\newblock {\em Journal of Combinatorial Theory, Series B}, 26(2):217--225,
  1979.

\bibitem[You70]{Youngs-3569}
John William~Theodore Youngs.
\newblock {Solution of the Heawood map-coloring problem --- Cases 3, 5, 6, and
  9}.
\newblock {\em Journal of Combinatorial Theory}, 8(2):175--219, 1970.

\end{thebibliography}

\newpage

\appendix

\section{Embeddings of small graphs}

The following rotation systems all satisfy Rule R*. The nontriangular face of our embedding of $K_{23}$ is subdivided with a new lettered vertex to make the embedding triangular---deleting that vertex reveals the desired embedding. 

\setlength{\tabcolsep}{0.33em}

\begin{table}[ht!]
\centering
{\small
$\begin{array}{rrrrrrrrrrr}
0. & 2 & 6 & 5 & 7 & 4 & 3 & 8 & 9 \\
1. & 3 & 5 & 6 & 9 & 4 & 8 & 7 \\
2. & 0 & 9 & 7 & 5 & 8 & 4 & 6 \\
3. & 0 & 4 & 5 & 1 & 7 & 9 & 6 & 8 \\
4. & 0 & 7 & 6 & 2 & 8 & 1 & 9 & 5 & 3 \\
5. & 0 & 6 & 1 & 3 & 4 & 9 & 8 & 2 & 7 \\
6. & 0 & 2 & 4 & 7 & 8 & 3 & 9 & 1 & 5 \\
7. & 0 & 5 & 2 & 9 & 3 & 1 & 8 & 6 & 4 \\
8. & 0 & 3 & 6 & 7 & 1 & 4 & 2 & 5 & 9 \\
9. & 0 & 8 & 5 & 4 & 1 & 6 & 3 & 7 & 2 \\
\end{array}$
}
\caption{A triangular embedding of $K_{10}-P_3$.}
\label{tab-k10}
\end{table}

\begin{table}[ht!]
\centering
{\small
$\arraycolsep=3.6pt\begin{array}{rrrrrrrrrrrrrrrrrrrrrrrrrrrrrrrrrrrrrr}
1. & 23 & 19 & 12 & 17 & 6 & 9 & 2 & 7 & 18 & 20 & 8 & 5 & 16 & 14 & 3 & 11 & 22 & 21 & 15 & 13 & 4 & 10 \\
2. & 1 & 9 & 20 & 15 & 4 & 11 & 5 & 13 & 3 & 16 & 19 & 6 & 21 & 22 & 17 & 14 & 10 & 8 & 18 & 23 & 12 & 7 \\
3. & 1 & 14 & 23 & 5 & 17 & 15 & 10 & 22 & 16 & 2 & 13 & 18 & 6 & 8 & 20 & 9 & 19 & 4 & 12 & 21 & 7 & 11 \\
4. & 1 & 13 & 22 & 18 & 9 & 11 & 2 & 15 & 23 & 6 & 16 & 8 & 7 & 14 & 17 & 21 & 20 & 5 & 12 & 3 & 19 & 10 \\
5. & 1 & 8 & 12 & 4 & 20 & p & 15 & 18 & 22 & 19 & 17 & 3 & 23 & 7 & 21 & 9 & 6 & 14 & 13 & 2 & 11 & 10 & 16 \\
6. & 1 & 17 & 10 & 20 & 16 & 4 & 23 & 13 & 21 & 2 & 19 & 15 & p & 12 & 11 & 7 & 22 & 8 & 3 & 18 & 14 & 5 & 9 \\
7. & 1 & 2 & 12 & 13 & 15 & 19 & 14 & 4 & 8 & 17 & 20 & 22 & 6 & 11 & 3 & 21 & 5 & 23 & 16 & 9 & 10 & 18 \\
8. & 1 & 20 & 3 & 6 & 22 & 23 & 14 & 9 & 17 & 7 & 4 & 16 & 21 & 18 & 2 & 10 & 13 & 19 & 11 & 15 & 12 & 5 \\
9. & 1 & 6 & 5 & 21 & 13 & 17 & 8 & 14 & 22 & 12 & 23 & 10 & 7 & 16 & 15 & 11 & 4 & 18 & 19 & 3 & 20 & 2 \\
10. & 1 & 4 & 19 & 16 & 5 & 11 & 21 & 12 & 22 & 3 & 15 & 20 & 6 & 17 & 13 & 8 & 2 & 14 & 18 & 7 & 9 & 23 \\
11. & 1 & 3 & 7 & 6 & 12 & 14 & 21 & 10 & 5 & 2 & 4 & 9 & 15 & 8 & 19 & 18 & 17 & 16 & 13 & 20 & 23 & 22 \\
12. & 1 & 19 & 13 & 7 & 2 & 23 & 9 & 22 & 10 & 21 & 3 & 4 & 5 & 8 & 15 & 14 & 11 & 6 & p & 20 & 18 & 16 & 17 \\
13. & 1 & 15 & 7 & 12 & 19 & 8 & 10 & 17 & 9 & 21 & 6 & 23 & 18 & 3 & 2 & 5 & 14 & 20 & 11 & 16 & 22 & 4 \\
14. & 1 & 16 & 20 & 13 & 5 & 6 & 18 & 10 & 2 & 17 & 4 & 7 & 19 & 21 & 11 & 12 & 15 & 22 & 9 & 8 & 23 & 3 \\
15. & 1 & 21 & 23 & 4 & 2 & 20 & 10 & 3 & 17 & 22 & 14 & 12 & 8 & 11 & 9 & 16 & 18 & 5 & p & 6 & 19 & 7 & 13 \\
16. & 1 & 5 & 10 & 19 & 2 & 3 & 22 & 13 & 11 & 17 & 12 & 18 & 15 & 9 & 7 & 23 & 21 & 8 & 4 & 6 & 20 & 14 \\
17. & 1 & 12 & 16 & 11 & 18 & 21 & 4 & 14 & 2 & 22 & 15 & 3 & 5 & 19 & 23 & 20 & 7 & 8 & 9 & 13 & 10 & 6 \\
18. & 1 & 7 & 10 & 14 & 6 & 3 & 13 & 23 & 2 & 8 & 21 & 17 & 11 & 19 & 9 & 4 & 22 & 5 & 15 & 16 & 12 & 20 \\
19. & 1 & 23 & 17 & 5 & 22 & 20 & 21 & 14 & 7 & 15 & 6 & 2 & 16 & 10 & 4 & 3 & 9 & 18 & 11 & 8 & 13 & 12 \\
20. & 1 & 18 & 12 & p & 5 & 4 & 21 & 19 & 22 & 7 & 17 & 23 & 11 & 13 & 14 & 16 & 6 & 10 & 15 & 2 & 9 & 3 & 8 \\
21. & 1 & 22 & 2 & 6 & 13 & 9 & 5 & 7 & 3 & 12 & 10 & 11 & 14 & 19 & 20 & 4 & 17 & 18 & 8 & 16 & 23 & 15 \\
22. & 1 & 11 & 23 & 8 & 6 & 7 & 20 & 19 & 5 & 18 & 4 & 13 & 16 & 3 & 10 & 12 & 9 & 14 & 15 & 17 & 2 & 21 \\
23. & 1 & 10 & 9 & 12 & 2 & 18 & 13 & 6 & 4 & 15 & 21 & 16 & 7 & 5 & 3 & 14 & 8 & 22 & 11 & 20 & 17 & 19 \\
p. & 6 & 15 & 5 & 20 & 12
\end{array}$
}
\caption{An embedding of type $(5)$ of $K_{23}$. }
\label{tab-k23}
\end{table}

\section{A new solution for Case 1}\label{app-c1}

In Section~\ref{sec-c1}, we found nearly triangular embeddings of $K_{12s+1}$ for $s \geq 3$ using a single family of current graphs with a large simple zigzag. In fact, there even exist families of current graphs for $K_{12s+1}-K_3$ that include the $s = 2$ case as well, such as the one in Figure~\ref{fig-sgen}. This is the simplest known proof of Case 1 for $s \geq 2$ of the original Map Color Theorem, and as remarked by Ringel~\cite[p.96]{Ringel-MapColor}, there cannot exist an index 1 current graph with three vortices of type (T1) for $s=1$ because $\Z_{10}$ does not have enough generators. 

\begin{figure}[!ht]
    \centering
    \begin{subfigure}[b]{0.99\textwidth}
    \centering
        \includegraphics[scale=0.9]{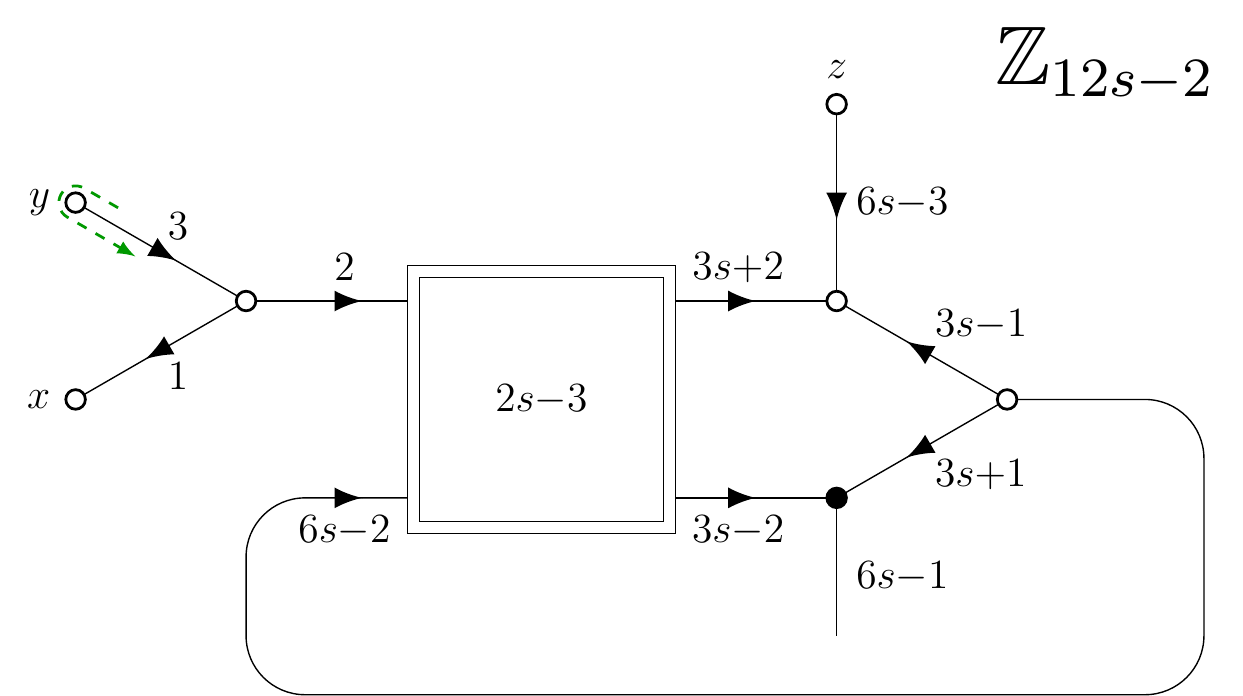}
        \caption{}
        \label{subfig-c1a}
    \end{subfigure}
    \begin{subfigure}[b]{0.99\textwidth}
    \centering
        \includegraphics[scale=0.8]{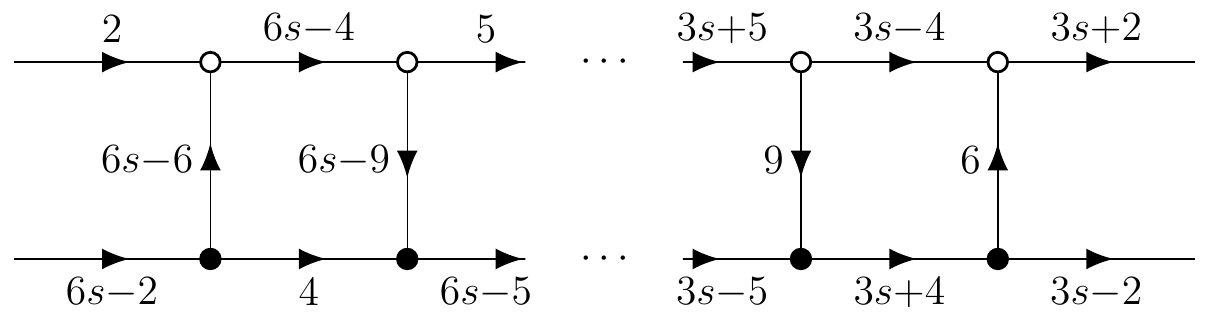}
        \caption{}
        \label{subfig-c1b}
    \end{subfigure}
\caption{Current graphs producing triangular embeddings of $K_{12s+1}-K_3$ for all $s \geq 2$.}
\label{fig-sgen}
\end{figure}

\end{document}